\numberwithin{equation}{section}
\theoremstyle{plain}
\newtheorem{theorem}{Theorem}[section]
\newtheorem{lemma}[theorem]{Lemma}
\theoremstyle{remark}
\newtheorem{remark}{Remark}[section]
\newtheorem{corollary}[theorem]{Corollary}
\def\sbt{\circ}
\def\t{\mathsf{T}}
\def\Gh{\widehat{\G}}
\def\Tset{\mathcal{T}}
\newcommand{\R}{\mathbb{R}}
\newcommand{\C}{\mathbb{C}}
\newcommand{\HM}{\mathbb{H}}
\newcommand{\T}{\mathsf{T}}
\newcommand{\X}{\mathbf{X}}
\newcommand{\A}{\mathbf{A}}
\newcommand{\CC}{\mathbf{C}}
\newcommand{\B}{\mathbf{B}}
\newcommand{\EE}{\mathbf{E}}
\newcommand{\G}{\mathbf{G}}
\newcommand{\OO}{\mathbf{O}}
\newcommand{\II}{\mathbf{I}}
\newcommand{\V}{\mathbf{V}}
\newcommand{\CM}{\mathbf{\Sigma}}
\newcommand{\SC}{ \widehat{\CM} }
\newcommand{\SCb}{\CM^{\sbt} }
\newcommand{\PP}{\mathbf{P}}
\newcommand{\PS}{\widehat{\PP}}
\newcommand{\PB}{\PP^{\sbt}}
\newcommand{\W}{\mathbf{W}}
\newcommand{\ee}{\mathbf{e}}
\newcommand{\uu}{\mathbf{u}}
\newcommand{\ub}{\uu^{\sbt}}
\newcommand{\vv}{\mathbf{v}}
\newcommand{\ve}{\varepsilon}
\def\CMt{\widetilde{\CM}}
\def\EEt{\widetilde{\EE}}
\def\EEh{\widehat{\EE}}
\def\EEb{\EE^{\sbt}}
\def\EEhb{\EEh^{\sbt}}
\def\xib{\xi^{\sbt}}
\def\Gammab{\Gamma^{\sbt}}
\def\PPt{\widetilde{\PP}}
\def\rr{\mathtt{r}}
\def\mm{\mathfrak{m}}
\def\gu{\overline{g}}
\def\etau{\overline{\eta}}
\def\deltau{\overline{\delta}}
\def\Yu{\overline{Y}}
\def\gammab{\gamma^{\sbt}}
\def\sigmab{\sigma^{\sbt}}
\def\sigmat{\widetilde{\sigma}}
\def\err{\diamondsuit}
\def\sigmah{\widehat{\sigma}}
\def\uuh{\widehat{\uu}}
\def\Eset{\mathcal{E}}
\def\Rerr{\mathcal{R}}
\DeclareMathOperator{\Tr}{Tr}
\DeclareMathOperator{\ND}{\mathcal{N}}
\DeclareMathOperator{\cond}{\big|}
\DeclareMathOperator{\Rank}{Rank}
\DeclareMathOperator{\KL}{KL}
\DeclareMathOperator{\E}{\mathbb{E}}
\DeclareMathOperator{\Eb}{\E^{\sbt}}
\DeclareMathOperator{\Cov}{Cov}
\DeclareMathOperator{\Var}{Var}
\DeclareMathOperator{\Covb}{\Cov^{\sbt}}
\DeclareMathOperator{\Varb}{\Var^{\sbt}}
\DeclareMathOperator{\Pb}{\mathbb{P}}
\DeclareMathOperator{\Pbb}{\Pb^{\sbt}}
\def\nquad{\hspace{-1cm}}
\DeclareMathOperator{\eqdef}{\,\,\mathrel{\overset{\makebox[0pt]{\mbox{\normalfont\tiny\sffamily def}}}{=}}\,\,}
\newcommand\myeq{\mathrel{\overset{\makebox[0pt]{\mbox{\normalfont\tiny\sffamily d}}}{=}}}
\DeclareMathOperator{\eqdefapp}{\,\,\mathrel{\overset{\makebox[0pt]{\mbox{\normalfont\tiny\sffamily def}}}{\asymp}}\,\,}
\DeclareMathOperator{\tow}{\,\,\mathrel{\overset{\makebox[0pt]{\mbox{\normalfont\tiny\sffamily w}}}{\longrightarrow}}\,\,}
\begin{document}
\setattribute{journal}{name}{}.

\begin{frontmatter}
\title{Bootstrap confidence sets for spectral projectors of sample covariance
}
\runtitle{Bootstrap confidence sets}

\begin{aug}
\author{\fnms{A.} \snm{Naumov}\thanksref{m5, m1, t1}\ead[label=e1]{a.naumov@skoltech.ru}},
\author{\fnms{V.} \snm{Spokoiny}\thanksref{m2, m1,t2}\ead[label=e2]{spokoiny@wias-berlin.de}}
\and
\author{\fnms{V.} \snm{Ulyanov}\thanksref{m3,m4}
\ead[label=e3]{vulyanov@cs.msu.ru}
}

\thankstext{t1}{Supported by RFBR N~16-31-00005 and Russian President Fellowship for young scientists N~4596.2016.1.}
\thankstext{t2}{Supported by the Russian Science Foundation grant (project 14-50-00150).}
\runauthor{A. Naumov et al.}

\affiliation{
	Weierstrass Institute for Applied Analysis and Stochastics\thanksmark{m2};\\ 
	Skolkovo Institute of Science and Technology \thanksmark{m5};\\
	IITP RAS \thanksmark{m1}; \\
	Lomonosov Moscow State University\thanksmark{m3};\\  
	National Research University Higher School of Economics (HSE) \thanksmark{m4}}

\address{A. Naumov,\\
Skolkovo Institute of Science and Technology\\
Skolkovo Innovation Center, \\
Bld. 3, 143026, Moscow, Russia;\\
\printead{e1}}

\address{V. Spokoiny\\
	Weierstrass-Institute \\
	Mohrenstr. 39\\
	10117 Berlin\\
	Germany\\
	\printead{e2}
}

\address{V. Ulyanov\\
Faculty of Computational Mathematics and Cybernetics\\
Moscow State University \\
Moscow, 119991, Russia
\printead{e3}\\}
\end{aug}

\begin{abstract}
Let \( X_{1},\ldots,X_{n} \) be i.i.d. sample in \( \R^{p} \) with zero mean and the covariance 
matrix \( \CM \). 
The problem of recovering the projector onto an eigenspace of \( \CM \) 
from these observations naturally arises in many applications. 
Recent technique from \cite{KoltchLounici2015} helps to study the asymptotic distribution 
of the distance in the Frobenius norm \( \| \PP_{r} - \PS_{r} \|_{2} \) 
between the true projector
\( \PP_{r} \) on the subspace of the \( r \)th eigenvalue and its empirical counterpart 
\( \PS_{r} \) in terms of the effective rank of \( \CM \).
This paper offers a bootstrap procedure for building sharp confidence sets for the true projector 
\( \PP_{r} \) from the given data. 
This procedure does not rely on the asymptotic distribution of 
\( \| \PP_{r} - \PS_{r} \|_{2} \) and its moments. It could be applied for small or moderate sample size \( n \) and large dimension \( p \). The main result states the validity of the proposed procedure for finite samples
with an explicit error bound for the error of bootstrap approximation.
This bound involves some new sharp results on Gaussian comparison and Gaussian anti-concentration in high-dimensional spaces.
Numeric results confirm a good performance of the method in realistic examples.
\end{abstract}

\begin{keyword}[class=MSC]
\kwd[Primary ]{60K35}
\kwd{60K35}
\kwd[; secondary ]{60K35}
\end{keyword}

\begin{keyword}
\kwd{sample covariance matrices, bootstrap, spectral projectors, Gaussian comparison and anti-concentration inequalities, effective rank}
\kwd{\LaTeXe}
\end{keyword}

\end{frontmatter}

\section{Introduction}

Let  \( X, X_{1}, \ldots , X_{n} \) be independent identically distributed (i.i.d.) random vectors taking values in \( \R^{p} \) with mean zero and \( \E\|X\|^{2} < \infty \). 
Denote by \( \CM \) its \( p \times p \) symmetric covariance matrix defined as
\begin{EQA}
	\CM 
	& \eqdef &
	\E (X X^{\T}).
	\label{CMdefEXX}
\end{EQA} 
We also consider the sample covariance matrix \( \SC \) of the observations \( X_{1}, \ldots , X_{n} \) 
defined as the average of \( X_{j} X_{j}^{\T} \):
\begin{EQA}
	\SC 
	&\eqdef & 
	\frac{1}{n} \sum_{j=1}^{n} X_{j}  X_{j}^{\T} = \frac{1}{n} \X \X^{\T}, 
\end{EQA}
where \( \X \eqdef  [X_{1}, \ldots , X_{n}] \in \R^{p \times n} \). 

In statistical applications, the true covariance matrix \( \CM \) is typically unknown and 
one often uses the sample covariance matrix \( \SC \) as its estimator. 
The accuracy \( \| \SC - \CM \| \) of estimation of \( \CM \) by \( \SC \),
in particular, for \( p \) much larger than \( n \),
has been actively studied in the literature.
We refer to \cite{Tropp2012} for an overview of the recent results based on the matrix Bernstein 
inequality; see also 
\cite{Vershynin2012}  
and 
\cite{vanHandel2015}. 
A bound in term of the effective rank \( \rr(\CM) \eqdef \Tr(\CM)/\| \CM \| \) can be found in
\cite{KoltchLounici2015b}. 
This or similar bounds on the spectral norm \( \| \SC - \CM \| \) can be effectively applied 
to relate the eigenvalues of \( \CM \) and of \( \SC \) under the spectral gap condition.
This paper focuses on a slightly different problem of recovering the spectral projectors 
on the eigen-subspaces of \( \CM \) for few significantly positive eigenvalues.
Such tasks naturally arise in 
many dimensionality reduction techniques for large \( p \).
In particular, the famous principal component analysis (PCA) projects the vector \( X \) onto the subspace 
spanned by the eigenvectors for the first principal eigenvalues.
A significant error in recovering these eigenvectors would lead to a substantial loss of information 
contained in the data by PCA projection. 
The popular Sliced Inverse Regression (SIR) method under the assumption of elliptically contoured  distributions for high dimensional or functional data leads back to recovering the eigen-subspace 
from a finite sample; see e.g. \cite{li2010} and references therein. 
The use of dimension reduction methods in deep networking architecture is discussed 
in \cite{Goodfellow-et-al-2016-Book} among others.
We also mention the use of dimension reduction technique in numerical integration with applications to finance and 
insurance; see e.g. 
\cite{holtz2010sparse}.
Justification of the assumption of low effective dimension in financial problems can be found in
\cite{WangSloan2005} among many others.

Surprisingly,
the problem of recovering the spectral projectors (eigenvectors or eigen-subspaces) of \( \CM \) 
from the sample \( X_{1},\ldots,X_{n} \) for significantly positive spectral values is much less studied than the problem of recovering the covariance matrix \( \CM \). 
Recently \cite{KoltchLounici2015} established sharp non-asymptotic bounds on the
Frobenius distance \( \| \PP_{r} - \PS_{r} \|_{2} \) between 
the spectral projectors \( \PP_{r} \) and its empirical counterparts \( \PS_{r} \)
for the \( r \)th eigenvalue, as well as its asymptotic behaviour for large samples. 
This enables to build some asymptotic confidence sets for the target projector \( \PP_{r} \) 
as a proper elliptic vicinity of \( \PS_{r} \). 
However, it is well known that such asymptotic results apply only for really large samples 
due to a slow convergence of the normalized U-statistics to the limiting normal law. 

The aim of this paper is to develop and validate a bootstrap procedure for building 
a confidence set for \( \PP_{r} \) which is applied for small or moderate samples and for large 
dimension \( p \).
Bootstrap method is nowadays one of the most popular way for measuring the significance of 
a test or for building a confidence sets. 
The existing theory based on the high order expansions of the related statistics states 
the bootstrap validity for various parametric methods. 
However, an extension to a non-classical situation with a limited sample size and/or high parameter 
dimension meets serious problems.
We refer to series of works \cite{Chernozhukov2013}, \cite{Chernozhukov2014} which validate a bootstrap procedure 
for a test based on the maximum of huge number of statistics. 
In particular, the authors emphasised a close relation between bootstrap validity results and 
the so called ``anticoncentration'' bounds on the Levy measure for rectangle sets.
The paper \cite{spokoiny2015} studies applicability of the likelihood based statistics for finite samples 
and large parameter dimension under possible model misspecification. 
The important step in the proof of bootstrap validity was again based on a kind of 
``anticoncentration bound'' but now for spherical sets. 

This paper makes a further step in understanding the range of applicability of a weighted bootstrap method
in constructing a finite sample confidence set for a spectral projector.
A proof of bootstrap validity in this setup is a challenging task. 
The spectral projector is a nonlinear and non-regular function of the 
covariance matrix, which itself is a quadratic function of the underlying multivariate
distribution.  
In situations with high-dimensional space and small or moderate sample size the classical
asymptotic methods of bootstrap validation do not apply. 
It appears that even in a Gaussian case the proof of bootstrap consistency requires to develop new  
probabilistic tools for establishing some sharp anticoncentration bounds for Gaussian measures in high-dimensional or even infinite dimensional Hilbert spaces. 
One more technical difficulty is that the bootstrap measure is random 
and depends upon the sample \( \X \). 
The same applies to all corresponding probabilities, that is, bootstrap quantiles are random
and data dependent. 
The main contributions of this paper are:
\begin{itemize}
	\item
	we offer a new bootstrap procedure for recovering the spectral projector on 
	a low dimensional eigen-subspace;
	\item
	the validity of this procedure is proved under rather general and mild conditions.
	We present a non-asymptotic upper bound 
	for the accuracy of bootstrap approximation. 
	The error term is dimension free and the bound applies even for the dimension \( p \)
	which is exponential in the sample size. 
	The result also applies for small or moderate samples;
	\item 
	a numerical study illustrates a very good performance of the proposed procedure
	in realistic setups;
	\item 
	we establish new sharp results on Gaussian 
	comparison and Gaussian anti-concentration which are heavily used for proving 
	the validity of the bootstrap procedure but they are
	probably of independent interest; see 
	Lemmas~\ref{l: explicit gaussian comparison} and \ref{band of GE} below. 
\end{itemize}

The paper is organized as follows.
The next section contains the description of the bootstrap procedure and the main results
about its validity.
Numerical results of Section~\ref{SnumresultsPP} illustrate the performance of the procedure for finite samples.
Main proofs are collected in Section~\ref{SproofsPP}.    The results on Gaussian 
comparison and Gaussian anti-concentration see in Section~\ref{SgaussianPP}.  Appendix~\ref{SappendPP} gathers some auxilary statements and existing results. 

Throughout the paper we will use the following notations. 
\( \R \) (resp. \( \C \)) denotes the set of all real (resp. complex) numbers.
We assume that all random variables are defined on common probability space \( (\Omega, \mathfrak{F}, \Pb) \) and let \( \E \) be the mathematical expectation with respect to \( \Pb \).  
\(\mathfrak{B}(\R^p)\) means the Borel \(\sigma\)-algebra in \( \R^{p} \). 
For a vector \( \uu \), by \( \| \uu \| \) we denote 
its natural Euclidean norm. 
For a matrix \( \A \in \R^{N\times N} \), we denote its rank and trace by \( \Rank \A \) and \( \Tr \A \) resp. Let \( \|\A \| \eqdef \sup_{\|x\|=1} \|\A x\| \). For a symmetric operator \(\A\) we define the Schatten \(p\)-norm by \( \|\A \|_p \eqdef \bigl(\sum_{k=1}^\infty |\lambda_k(\A)|^p\bigr)^{1/p} \), where \(\lambda_1(\A), \lambda_2(\A), \dots\) are the eigenvalues of \(\A\). In particular, \( \|\A\|_{2}\) is the Hilbert-Schmidt (Frobenius) norm of \(\A\). For symmetric positive-definite matrix \(\A\) we define its effective rank by \(\rr(\A) \eqdef {\Tr \A}/{\|\A\|}\). We write \( a \lesssim b \) (\( a \gtrsim b \)) if there exists some absolute constant \( C \) such that \( a \le C b \) (\( a \geq C b \) resp.). 
Similarly, \(a \asymp b\) means that there exist \(c, C\) such that \(c \, a \le b \le C \, a\). 
For r.v. \( X \) and \( Y \) we write \( X \myeq Y \) if they are equally distributed.

\section{Procedure and main results}
\label{SprocmainPP}
This section presents the bootstrap procedure for building a confidence set for the true projector 
\( \PP_{r} \) and states the result about its validity.

\subsection{Setup and problem}
Let \( \sigma_{1} \geq \sigma_{2} \geq \ldots \geq \sigma_{p} \) be the eigenvalues of \( \CM \) and 
\( \uu_{j}, j = 1, \ldots , p\), be the corresponding orthonormal eigenvectors. 
Matrix \( \CM \) has the following spectral decomposition 
\begin{EQA}
	\CM &=& \sum_{j = 1}^{p} \sigma_{j} \uu_{j} \uu_{j}^{\T}.	\label{eq: spectral decomposition}
\end{EQA}
Let \( \mu_{1} > \mu_{2} > \ldots > \mu_{q} > 0\) with some \( 1 \le q \le p \), be strictly distinct eigenvalues of \( \CM \) and \( \PP_{r}, r = 1, \ldots, q\), be the corresponding spectral projectors (orthogonal projectors in \( \R^{p} \)). 
Denote \( m_{r} \eqdef  \Rank(\PP_{r}) \). 
We may rewrite~(\refeq{eq: spectral decomposition}) in terms of distinct eigenvalues and corresponding spectral projectors, namely
\begin{EQA}
	\CM &=& \sum_{r = 1}^{q} \mu_{r} \PP_{r}. 	\label{eq: spectral decomposition 2}
\end{EQA}
Denote by \( \Delta_{r} \eqdef  \{j\colon \sigma_{j} = \mu_{r}\} \). 
Then \( |\Delta_{r}| = m_{r} \). 
Define \( g_{r} \eqdef  \mu_{r} - \mu_{r+1} >0\) for \( r \geq 1 \). 
Let \( \gu_{r} \eqdef  \min(g_{r-1}, g_{r}) \) for \( r \geq 2 \) and \( \gu_{1} \eqdef  g_{1} \). 
The quantity \( \gu_{r} \) is the \( r \)-th spectral gap of the eigenvalue \( \mu_{r} \). 

Consider now the sample covariance matrix \( \SC \). 
Similarly to~(\refeq{eq: spectral decomposition}), it can be represented as
\begin{EQA}
	\SC &=& \sum_{j = 1}^{p} \sigmah_{j} \uuh_{j} \uuh_{j}^{\T},
\end{EQA}
where \( \sigmah_{1} \geq \sigmah_{2} \geq \ldots \geq \sigmah_p, \uuh_{1}, \ldots , \uuh_p \) are the eigenvalues and the corresponding eigenvectors of \( \SC \). 
Following~\cite{KoltchLounici2015} we may define clusters of eigenvalues \( \sigmah_{j}, j \in \Delta_{r} \). 
Let \( \EEh \eqdef  \SC - \CM \). 
One may show that
\begin{EQA}
	\inf_{j \notin \Delta_{r}}|\sigmah_{j} - \mu_{r}| 
	&\geq &
	\gu_{r} - \|\EEh\|,
	\\
	\sup_{j \in \Delta_{r}}|\sigmah_{j} - \mu_{r}| 
	&\leq &
	\|\EEh\|.
\end{EQA}
Assume that \( \|\EEh\| \le \gu_{r}/2 \). 
Then  all \( \sigmah_{j}, j \in \Delta_{r} \) may be covered by an interval 
\begin{EQA}
	(\mu_{r} - \|\EEh\|, \mu_{r} + \|\EEh\|) 
	&\subset &
	\left(\mu_{r} - \frac{\gu_{r}}{2}, \mu_{r} + \frac{\gu_{r}}{2} \right).
\end{EQA}
The rest of the eigenvalues of \( \SC \) are outside of the interval
\begin{EQA}
	\Bigl( \mu_{r} - (\gu_{r} - \|\EEh\|), \mu_{r} + (\gu_{r} - \|\EEh\|) \Bigr) 
	&\supset &
	\left [\mu_{r} - \frac{\gu_{r}}{2}, \mu_{r} + \frac{\gu_{r}}{2}\right].
\end{EQA}
Let \( \|\EEh\| < \frac14 \min_{1 \le s \le r} \gu_{s} = :\deltau_{r} \). 
The set \( \{\sigmah_{j}, j \in \cup_{s=1}^{r} \Delta_{s} \} \) consists of \( r \) clusters, the diameter of each cluster being strictly smaller than \( 2 \deltau_{r} \) and the distance between any two clusters being larger than \( 2 \deltau_{r} \). 
We denote by \( \PS_{r} \) the projector on subspace spanned by the direct sum of \( \uuh_{j}, j \in \Delta_{r} \).  
The asymptotic behavior of 
\( \|\PS_{r} - \PP_{r}\|_{2}^{2} \) can be used for building sharp asymptotic confidence sets for the unknown
projector \( \PP_{r} \). 
It follows from~\cite{KoltchLounici2015}[Theorem 5] that 
\begin{EQA}
	\frac{\|\PS_{r} - \PP_{r}\|_{2}^{2} - \E \|\PS_{r} - \PP_{r}\|_{2}^{2}}
	{\Var^{1/2}(\|\PS_{r} - \PP_{r}\|_{2}^{2})} 
	& \tow &
	\ND(0,1), \label{PRshPSr2twN}
\end{EQA}
that is, after centering and normalization, the error \( \|\PS_{r} - \PP_{r}\|_{2}^{2} \) 
is asymptotically standard normal. 
This allows to build an asymptotic elliptic confidence set for \( \PP_{r} \) in the form
\begin{EQA}
	\biggl\{ \PP_{r} \colon  
	\frac{\|\PS_{r} - \PP_{r} \|_{2}^{2} - \E \|\PS_{r} - \PP_{r}\|_{2}^{2}}
	{\Var^{1/2}(\|\PS_{r} - \PP_{r}\|_{2}^{2})}
	&\leq &
	z_{\alpha}
	\biggr\}
\end{EQA}
where \( z_{\alpha} \) is a proper quantile of the standard normal law.
However, there are at least two drawbacks of this approach. First, the weak convergence in~(\refeq{PRshPSr2twN}) is very slow and it requires astronomic sample size to achieve a reasonable quality
of approximation.
Second, to apply this construction in practice we need to know or to estimate the values 
\( \E \|\PS_{r} - \PP_{r}\|_{2}^{2} \) and 
\( \Var(\|\PS_{r} - \PP_{r}\|_{2}^{2}) \) which depends on the unknown covariance operator 
\( \CM \).  
\cite{KoltchLounici2015} offered a procedure which splits the sample into three subsamples,
one for estimating the expectation and another one for estimating the variance of 
\( \|\PS_{r} - \PP_{r}\|_{2}^{2} \). 
The remaining data can be used for building the confidence set. 
The present paper proposes another procedure which 
\begin{itemize}
	\item
	does not rely on the asymptotic distribution
	of the error \( \|\PS_{r} - \PP_{r}\|_{2}^{2} \),
	\item
	does not require to know the moments
	of \( \|\PS_{r} - \PP_{r}\|_{2}^{2} \),  
	\item
	does not involve any data splitting,
	\item
	provides an explicit error bound for the bootstrap approximation. 
\end{itemize}

The procedure is based on the resampling idea which allows to estimate directly the quantiles
\begin{EQA}
	\gamma_{\alpha} 
	&\eqdef & 
	\inf \left \{
	\gamma > 0 \colon \Pb\left( n\|\PS_{r} - \PP_{r}\|_{2}^{2} > \gamma \right) \leq \alpha  
	\right\} \label{eq: quantile}
\end{EQA}
without estimating the covariance matrix \( \CM \). 
The introduced bootstrap procedure is described in the next section. 

\subsection{Bootstrap procedure} 
We introduce the following weighted version of \( \SC \), namely
\begin{EQA}
	\SCb &\eqdef & \frac{1}{n} \sum_{i=1}^{n} w_{i} X_{i} X_{i}^{\T},
\end{EQA}
where \( w_{1}, \ldots , w_{n} \) are i.i.d. random variables, 
independent of \( \X = (X_{1}, \ldots , X_{n}) \), with \( \E w_{1} = 1\), \( \Var w_{1} = 1 \). 
A typical example used in this paper is to apply i.i.d. Gaussian weights 
\( w_{i} \sim \ND(1,1) \).
We denote by \( \Pbb(\cdot) \eqdef \Pb(\cdot \cond \X) \) and \( \Eb \) corresponding conditional probability and expectation. 
It is straightforward to check that
\begin{EQA}
\label{eq: math exp of sample cov}
	\Eb\SCb &=& \SC. 
\end{EQA}
In what follows we will often refer to "\(\X\) - world" and "bootstrap world". In the \( \X \) - world the sample \( \X \) is random opposite to the bootstrap world, where \(\X\) is fixed, but \( w_{1}, \ldots, w_n \) are random.  
Then, equation~(\refeq{eq: math exp of sample cov}) implies that in the bootstrap world we know precisely the expectation of \( \SCb \) opposite to the \( \X \) - world, where \( \CM \) is unknown.
Similarly to~(\refeq{eq: spectral decomposition}) we may write
\begin{EQA}
	\SCb &=& \sum_{j = 1}^{p} \sigmab_{j} \ub_{j} {\ub_{j}}^{\T}.
\end{EQA}
Let us denote by \( \PB_{r} \) a projector on the subspace spanned by the direct sum of \( \ub_{j}, j \in \Delta_{r} \). For a given \( \alpha \) we define the quantile \( \gammab_{\alpha} \) as
\begin{EQA}
\label{def gamma null}
	\gammab_{\alpha}
	&\eqdef & 
	\min \left\{ 
		\gamma>0 \colon \Pbb\left(n\| \PB_{r} - \PS_{r}\|_{2}^{2} > \gamma\right) \le \alpha 
	\right\}.
\end{EQA}
Note that this value \( \gammab_{\alpha} \) is defined w.r.t. the bootstrap measure, therefore, it depends on the data \( \X \). 
This bootstrap critical value \( \gammab_{\alpha} \) is applied in the \( \X \) - world to build the confidence set
\begin{EQA}
\label{condsetPP}
	\Eset(\alpha) 
	&\eqdef &
	\bigl\{ \PP \colon n \| \PP - \PS_{r} \|_{2}^{2} \le \gammab_{\alpha} \bigr\}. 
\end{EQA}
The main result given   in the next section justifies this construction and evaluate the coverage probability of the true projector \( \PP_{r} \) by this set. It states that
\begin{EQA}
	\Pb( \PP_{r} \not\in \Eset(\alpha) \bigr)
	& = &
	\Pb( n \| \PP_{r} - \PS_{r} \|_{2}^{2} > \gammab_{\alpha} \bigr)
	\approx
	\alpha .
\end{EQA}

\subsection{Main results. Bootstrap validity}
To formulate the main result of this paper we introduce additional notation. Define the following block-matrix
\begin{EQA}
	\label{eq:cov-matrix-gamma}
	\Gamma_{r} 
	&\eqdef &
	\begin{pmatrix}
		\Gamma_{r 1} & \OO & \ldots & \OO \\
		\OO & \Gamma_{r 2} & \OO \ldots & \OO\\
		\ldots \\
		\OO & \ldots & \OO & \Gamma_{r q}
	\end{pmatrix},
\end{EQA}
where \( \Gamma_{rs}, s \neq r \)
are diagonal matrices of order \( m_{r} m_{s}\times m_{r} m_{s} \) with values \( {2\mu_{r} \mu_{s}}/{(\mu_{r} - \mu_{s})^{2}} \) on the main diagonal. 
Let \( \lambda_{1}(\Gamma_{r}) \geq \lambda_{2}(\Gamma_{r}) \geq \ldots  \) be the eigenvalues of 
\( \Gamma_{r} \).

The available bounds on the distance between the covariance matrix and 
its empirical counterpart claim that the eigenvalues of \( \CM \) can be recovered 
with accuracy \(O(1/\sqrt{n})\). 
Therefore, the part of the spectrum of \( \CM \) below a  threshold of order \(O(1/\sqrt{n})\)  
cannot be estimated. 
The same applies to the matrix \( \Gamma_{r} \).
Introduce the corresponding value \( \mm \): 
\begin{EQA}
\label{strangemPP}
	\lambda_{\mm}(\Gamma_{r}) 
	&\geq &
	\Tr \Gamma_{r} \left(\sqrt{\frac{\log n}{n}} + \sqrt{\frac{\log p}{n}} \right) \geq \lambda_{\mm+1}(\Gamma_{r}).
\end{EQA}
Denote by \( \Pi_{\mm} \) a projector on the subspace spanned by the eigenvectors of \(\Gamma_{r}\) corresponding to its largest \( \mm \) eigenvalues. The main result is the following theorem.

\begin{theorem}
	\label{th: main}
	Let observations \( X, X_{1}, \ldots , X_{n} \) be i.i.d. Gaussian random vectors in 
	\( \R^{p} \) with \( \E X = 0 \) and \( \E X X^{\T} = \CM \).
	Let \( \gammab_{\alpha} \) be defined 
	by (\refeq{def gamma null}) for any  \( \alpha: 0 < \alpha < 1 \), with i.i.d. Gaussian random weights \( w_{i} \sim \ND(1,1) \)
	for \( i=1,\ldots,n \). 
	Then the following bound is fulfilled
	\begin{EQA}
		\left|\alpha - \Pb\left(n\|\PS_{r} - \PP_{r}\|_{2}^{2} > \gammab_{\alpha} \right) \right| 
		&\lesssim &
		\err,
	\label{mainbootapp}
	\end{EQA}
	where
	\begin{EQA}
		\err &\eqdef& \frac{ \mm\, \Tr \Gamma_{r} }{\sqrt{\lambda_{1}(\Gamma_{r}) \lambda_{2}(\Gamma_{r})}} \left(\sqrt{\frac{\log n}{n}} + \sqrt{\frac{\log p}{n}} \right) +  \frac{\Tr (\II - \Pi_\mm)\Gamma_{r}}{\sqrt{\lambda_{1}(\Gamma_{r}) \lambda_{2}(\Gamma_{r})}} 
		\\
		\label{def: err}
		&&
		+ \, \frac{m_{r} \Tr^3\CM}{\gu_{r}^{3} \sqrt{\lambda_{1}(\Gamma_{r}) \lambda_{2}(\Gamma_{r})} } \left(\sqrt{\frac{\log ^{3} n}{n}} 
		+ \sqrt{\frac{\log^{3} p}{n}} \right) 
	\end{EQA}
and \(\mm\)  is defined by (\refeq{strangemPP}).
\end{theorem}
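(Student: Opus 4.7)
The plan is to reduce the bootstrap validity question to a comparison of Gaussian quadratic forms via a Koltchinskii--Lounici-style first-order expansion of the projectors on both sides, and then to convert that distributional comparison to quantile closeness using the Gaussian anti-concentration tools announced in Section~\ref{SgaussianPP}. First, on the high-probability event $\{\|\EEh\|\le\deltau_r\}$ I expand
\[
\PS_r - \PP_r \;=\; \sum_{s\neq r}\frac{1}{\mu_r - \mu_s}\bigl(\PP_s\EEh\PP_r + \PP_r\EEh\PP_s\bigr) + \Rerr_r,
\]
where $\Rerr_r$ is a quadratic remainder of order $\|\EEh\|^2/\gu_r$. Squaring in Hilbert--Schmidt norm and using mutual orthogonality of the $\PP_s$ kills all cross terms and yields that $n\|\PS_r-\PP_r\|_2^2$ equals $2n\sum_{s\neq r}\|\PP_s\EEh\PP_r\|_2^2/(\mu_r-\mu_s)^2$ up to a quantitative remainder. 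For Gaussian $X_i$, writing $X_i = \sum_k\sqrt{\sigma_k}Z_{ik}\uu_k$ with i.i.d.\ standard Gaussians $Z_{ik}$ shows that for $s\neq r$ the coordinates of $\sqrt n\,\PP_s\EEh\PP_r$ in the eigenbasis are, up to a CLT error, independent across $(k,l)\in\Delta_s\times\Delta_r$ with mean zero and variance $\mu_s\mu_r$. Summing over $s$ identifies the leading term as approximately $\|\xi\|^2$ for $\xi\sim\ND(0,\Gamma_r)$, matching exactly the block structure defined in (\ref{eq:cov-matrix-gamma}).

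The same expansion in the bootstrap world represents $n\|\PB_r - \PS_r\|_2^2$ as a conditional quadratic form in the Gaussian weights $w_i$, which is exactly $\|\eta\|^2$ with $\eta\sim\ND(0,\widehat\Gamma_r)$, where $\widehat\Gamma_r$ is built from the empirical eigenvalues $\sigmah_j$ and empirical spectral projectors; an analogous bootstrap remainder appears with the empirical gap replacing $\gu_r$. Standard concentration of $\SC$ around $\CM$ under Gaussianity gives $|\sigmah_j - \mu_j|\lesssim \|\EEh\|\lesssim \Tr\CM\bigl(\sqrt{\log n/n}+\sqrt{\log p/n}\bigr)$, which propagates through the explicit form of $\Gamma_r$ to a bound on $\|\widehat\Gamma_r - \Gamma_r\|$ of order $\Tr\Gamma_r\bigl(\sqrt{\log n/n}+\sqrt{\log p/n}\bigr)$ on the same event.

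I then apply the Gaussian comparison Lemma~\ref{l: explicit gaussian comparison} to the pair $(\xi,\eta)$ in Euclidean norm: truncating at level $\mm$ via the projector $\Pi_\mm$ splits the comparison into a top part, whose contribution is controlled by $\mm\,\Tr\Gamma_r/\sqrt{\lambda_1(\Gamma_r)\lambda_2(\Gamma_r)}$ times the perturbation $\sqrt{\log n/n}+\sqrt{\log p/n}$, and a tail absorbed by $\Tr(\II-\Pi_\mm)\Gamma_r/\sqrt{\lambda_1(\Gamma_r)\lambda_2(\Gamma_r)}$; these become the first two terms of $\err$. The remaining linearization error $\Rerr_r$, of magnitude $m_r\Tr^3\CM/\gu_r^3\cdot(\log(n\vee p)/n)^{3/2}$, enters only through a band of comparable width around the threshold, and the anti-concentration Lemma~\ref{band of GE} turns this band into the third term of $\err$. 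Combining everything, I write
\[
\alpha - \Pb\bigl(n\|\PS_r-\PP_r\|_2^2 > \gammab_\alpha\bigr) \;=\; \Pbb\bigl(n\|\PB_r-\PS_r\|_2^2 > \gammab_\alpha\bigr) - \Pb\bigl(n\|\PS_r-\PP_r\|_2^2 > \gammab_\alpha\bigr),
\]
replace each probability by the corresponding Gaussian-quadratic-form probability (plus remainder), compare via the lemma, and collect the anti-concentration penalty.

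The main obstacle, in my view, is that $\gammab_\alpha$ is itself a \emph{random} data-dependent threshold, so both the Gaussian comparison and the anti-concentration must hold uniformly in the level; this uniformity is precisely what the sharp dimension-free bounds of Lemmas~\ref{l: explicit gaussian comparison} and \ref{band of GE} are designed to deliver. The effective-dimension quantity $\mm$ appears exactly to balance the operator-norm perturbation of $\widehat\Gamma_r$ against the anti-concentration ``price'' of reducing the effective dimension of the Gaussian vector, and this balance is what dictates the three-term form of $\err$.
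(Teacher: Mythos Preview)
Your overall architecture matches the paper's: linearize both $\PS_r-\PP_r$ and $\PB_r-\PS_r$ via the Koltchinskii--Lounici expansion, reduce to comparing squared norms of two Gaussian vectors, invoke the Gaussian comparison Lemma~\ref{l: explicit gaussian comparison}, and use the anti-concentration Lemma~\ref{band of GE} to absorb the linearization remainders and the randomness of $\gammab_\alpha$. Two points, however, are handled differently in the paper, and the first is a genuine gap in your plan.

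\emph{Gaussian approximation in the $\X$-world.} You describe the coordinates of $\sqrt n\,\PP_s\EEh\PP_r$ as jointly Gaussian ``up to a CLT error''. Taken literally this is a multivariate CLT for a vector of dimension $m_r\sum_{s\neq r}m_s\asymp p$, and a Berry--Esseen bound for balls at that dimension will not yield the stated dimension-free contribution to $\err$. The paper avoids any CLT: because $\PP_r X_i$ and $\CC_r X_i$ are \emph{independent} Gaussians (since $\PP_r\CM\CC_r=0$), conditioning on $\Yu=(\PP_r X_1,\ldots,\PP_r X_n)$ makes the coordinate vector $S_r$ \emph{exactly} $\ND(0,\Gamma_r^{\Yu})$. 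Matrix Bernstein then gives $\|\Gamma_r^{-1/2}\Gamma_r^{\Yu}\Gamma_r^{-1/2}-\II\|\lesssim\sqrt{m_r}\bigl(\sqrt{\log n/n}+\sqrt{\log p/n}\bigr)$ with high probability, and Corollary~\ref{cor 1 gauss} converts this directly into the required bound. This conditioning step is the device that produces the rate; without it your $\X$-world step does not close.

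\emph{Bootstrap linearization.} You expand $\PB_r-\PS_r$ around the empirical $\PS_r$, so that your $\widehat\Gamma_r$ is built from empirical eigenvalues and projectors and the remainder carries the empirical gap. The paper instead writes $\PB_r-\PS_r=(\PB_r-\PP_r)-(\PS_r-\PP_r)$ and linearizes both pieces around the \emph{true} $\PP_r$; the linear parts combine to $L_r(\EEhb)=\PP_r\EEhb\CC_r+\CC_r\EEhb\PP_r$, so the bootstrap covariance $\Gammab_r$ lives in the true eigenbasis with entries $\tfrac{2}{n}\sum_i\langle\uu,\PP_rX_i\rangle\langle\tilde\uu,\PP_rX_i\rangle\langle\vv,\CC_rX_i\rangle\langle\tilde\vv,\CC_rX_i\rangle$. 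Comparing $\Gammab_r$ to $\Gamma_r$ is then a pure fourth-moment concentration problem, with no eigenvector perturbation entering. Your route is workable in principle but forces you to control $\|\PS_s-\PP_s\|$ for all $s$ and the empirical gap inside the comparison, which the paper's decomposition sidesteps entirely.
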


\begin{remark}
	The result (\refeq{mainbootapp}) implicitly assumes that the error term \( \err \) is small. 
	If \( \err \geq 1 \) then (\refeq{mainbootapp}) is meaningless. 
	In particular, this implies that 
\begin{EQA}
	 p &\lesssim & 
	 e^{n^{1/3}} . 
\label{plen13b}
\end{EQA}
\end{remark}
\begin{remark}
	The error term \( \err \) can be described in terms of \(\CM \). 
	It is easy to check that for all \( r \) 
	\begin{EQA}
	\label{eq: A_r estimate}
		\Tr \Gamma_{r} \lesssim m_{r} \frac{\mu_{r} \Tr \CM}{\gu_{r}^{2}} 
		&\le &
		m_{r} \frac{\|\CM\|^{2} \rr(\CM)}{\gu_{r}^{2}}.
	\end{EQA}
	Let us consider, for example, the case \( r = 2\) and \(m_1 = m_2 = 1\). Introduce a function \( f(x) = {2 x \mu_2}/{(x - \mu_2)^2}\) at the points \( x = \mu_s, s \neq 2\). It is straightforward to check that the maximum of \(f(x)\) is achieved at \(x = \mu_1\) or \(\mu_3\). Moreover,   assume that the largest values of \(f(x)\) are \(f(\mu_1)\) and \( f(\mu_3)\). Then we may estimate \( \err \) as follows:
	\begin{EQA}
		\err &\lesssim& \frac{ \mm \, \Tr \CM }{\overline g_2} \sqrt{\frac{\mu_1}{\mu_3}} \left(\sqrt{\frac{\log n}{n}} + \sqrt{\frac{\log p}{n}} \right) +  \sqrt{\frac{\mu_1}{\mu_3}} \frac{ \Tr (\II - \Pi_{\mm}) \CM }{\overline g_2} 
		\\
		&&
		+ \, \frac{ \Tr^3\CM}{\gu_{2}^{2} \, \mu_2} \sqrt{\frac{\mu_1}{\mu_3}} \left(\sqrt{\frac{\log ^{3} n}{n}} 
		+ \sqrt{\frac{\log^{3} p}{n}} \right).
	\end{EQA}
\end{remark}

Although an analytic expression for the value \( \gammab_{\alpha} \) is not available,
one can evaluate it from numerical simulations 
by generating a large number of independent samples \( \{w_{1}, \ldots , w_{n}\} \) 
and computing from them the empirical distribution function of \( n \|\PB_{r} - \PS_{r}\|_{2}^{2} \).
Theorem~\ref{th: main} validates the proposed construction of the confidence set (\refeq{condsetPP}),
that is, it justifies the use of this value \( \gammab_{\alpha} \) in place of 
\( \gamma_{\alpha} \) defined in~(\refeq{eq: quantile}) provided that the error \( \err\) is 
sufficiently small.

\section{Numerical results}
\label{SnumresultsPP}

This section illustrates the performance of the bootstrap procedure by means of few artificial examples. 
Namely, we check how well is the bootstrap approximation of the true quantiles. 
We use QQ-plots to compare the distributions of 
\( n \|\PB_{1} - \PS_{1}\|_{2}^{2} \) and \(n\|\PS_{1} - \PP_{1}\|_{2}^{2}\). 

First we describe our setup.
Let \(n\) be a sample size. We consider the different values of \(n\), namely \(n = 100, 300, 500, 1000, 2000, 3000\). 
Let \(X_{1}, \ldots , X_{n}\) have the normal distribution in \(\R^{p}\), with zero mean and covariance matrix \(\CM\). 
The value of \(p\) and the choice of \(\CM\) will be described below. 
The distribution of \(n\|\PS_{1} - \PP_{1}\|_{2}^{2}\) is evaluated by using 3000 Monte-Carlo samples
from the normal distribution with zero mean and covariance \(\CM\). 
The bootstrap distribution for a given realization \( X \) is evaluated by 3000 Monte-Carlo samples of bootstrap weights \( \{w_{1}, \ldots , w_{n}\} \). 
Since this distribution is random and depends on \( X \), we finally use the median from 50 realizations
of \( X \) for each quantile.

In the first example we consider the  following parameters: 
\begin{itemize}
	\item \quad \(p = 500\),
	\item \quad \(\mu_{1} = 36, \mu_{2} = 30, \mu_{3} = 25, \mu_{4} = 19\) and all other eigenvalues \(\mu_{s}, s = 5, \ldots , 500\) are uniformly distributed in \([1,5]\).
\end{itemize}
Here we get \(\gu_{1} = 6\) and \(\rr(\CM) = 51.79\). Figure~\ref{ris:qqp500} shows the corresponding QQ-plots for the empirical distribution of \( n\|\PS_{1} - \PP_{1}\|_{2}^{2} \) against its bootstrap counterpart. Table~\ref{tab 1} shows the coverage probabilities of the quantiles estimated using the bootstrap. 
\begin{figure}[!htb] 
	\includegraphics[width=1\textwidth,height=0.4\textheight]{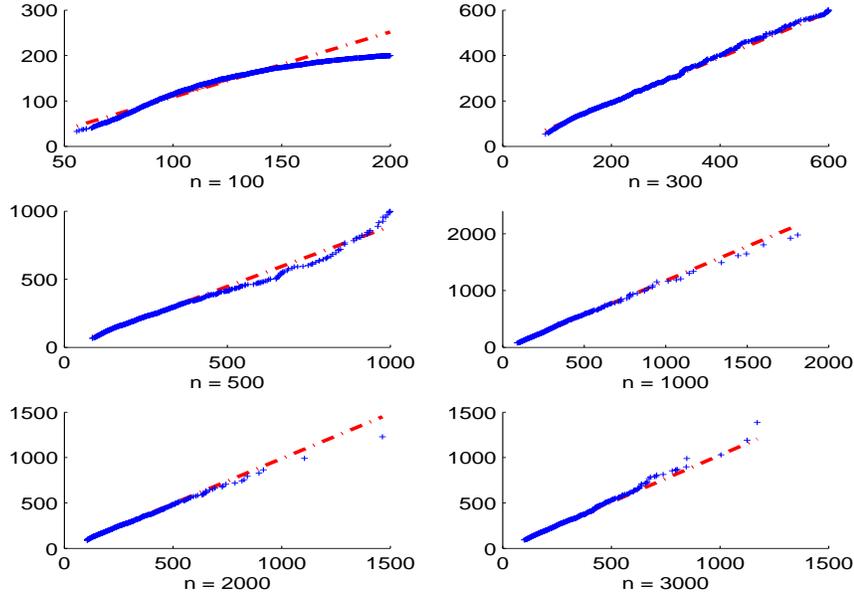}
	\caption[]{QQ-plot of the bootstrap procedure. Here \(p = 500\) and \(\CM\) has the following properties: \(\rr(\CM) = 51.79,\, \mu_{1} = 36, \mu_{2} = 30, \mu_{3} = 25, \mu_{4} = 29\) and all other eigenvalues are uniformly distributed on \([1,5]\).}
	\label{ris:qqp500}
\end{figure}
\begin{table}[!htb]
	\caption{Coverage probabilities. For each \(n\) the first line corresponds to the median value of the coverage probability and the second line corresponds to the interquartile range. Here \(p = 500\) and \(\CM\) has the following properties: \(\rr(\CM) = 51.79,\, \mu_{1} = 36, \mu_{2} = 30, \mu_{3} = 25, \mu_{4} = 29\) and all other eigenvalues are uniformly distributed on \([1,5]\).}
	\label{tab 1}
	\begin{tabular}{crrrrrc}
		\hline
		 \multicolumn{7}{c}{\qquad\qquad Confidence levels}\\
		 \cline{2-7}
		\( n \) & \multicolumn{1}{c}{0.99} & \multicolumn{1}{c}{0.95} & \multicolumn{1}{c}{0.90} & \multicolumn{1}{c}{0.85} &
		\multicolumn{1}{c}{0.80} &
		\multicolumn{1}{c}{0.75}\\
		\hline
		100	&   0.997 &	0.977 &	0.947 &	0.917	& 0.882	& 0.850\\
		&   0.004 &	0.025 &	0.052 &	0.073	& 0.089	& 0.101\\
		300	&   0.992 &	0.965 &	0.933 &	0.883	& 0.834	& 0.779\\    
		&   0.054 &	0.139 &	0.223 &	0.264	& 0.318	& 0.358\\
		500	&   0.992 &	0.965 &	0.933 &	0.883	& 0.834	& 0.779\\
		&   0.026 &	0.093 &	0.163 &	0.208	& 0.233	& 0.262\\
		1000	&   0.992 &	0.965 &	0.933 &	0.883	& 0.834	& 0.779\\
		&   0.021 &	0.063 &	0.114 &	0.149	& 0.161	& 0.177\\
		2000	&   0.988 &	0.945 &	0.883 &	0.836	& 0.789	& 0.731\\
		&   0.021 &	0.060 &	0.086 &	0.104	& 0.125	& 0.141\\
		3000	&   0.994 &	0.951 &	0.900 &	0.857	& 0.808	& 0.751\\   
		&   0.016 &	0.054 &	0.072 &	0.085	& 0.093	& 0.103
		\\ \hline
	\end{tabular}
\end{table}

The second example parameters: 
\begin{itemize}
	\item \quad \(p = 100\),
	\item \quad \(\mu_6, \ldots, \mu_{100}\) are distributed according to Marchenko-Pastur's density with the support on \([0.71, 1.34]\), see~\cite{MarchPastur1967},
	\item \quad all other eigenvalues are \(\mu_{1} = 25.698, \mu_{2} = 15.7688, \mu_{3} = 10.0907, \mu_{4} = 5.9214, \mu_{5} = 3.4321\).
\end{itemize} 
Here \(\gu_{1} = 9,93\) and \(\rr(\CM) = 6.12\). QQ plots are presented on Figure~\ref{ris:qqp100} and the coverage probabilities are collected in Table~\ref{tab 2}.
\begin{figure}[!htb] 
	\includegraphics[width=1\textwidth,height=0.4\textheight]{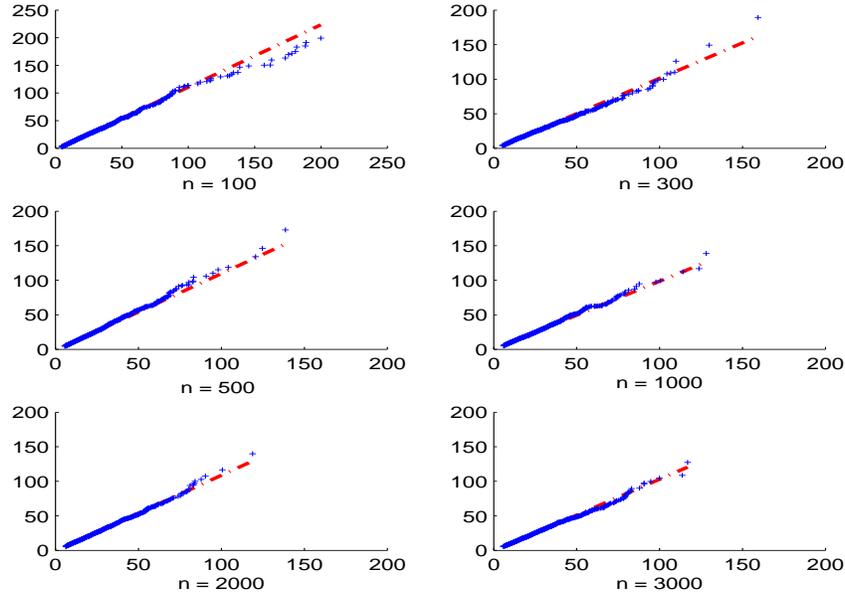}
	\caption[]{QQ-plot of the bootstrap procedure. Here \(p = 100\) and \(\CM\) has the following properties: \(\rr(\CM) = 6.12\) and \(\mu_6, \ldots, \mu_{100}\) are distributed according to Marchenko-Pastur's density with the support on \([0.71, 1.34]\). All other eigenvalues are \(\mu_{1} = 25.698, \mu_{2} = 15.7688, \mu_{3} = 10.0907, \mu_{4} = 5.9214, \mu_{5} = 3.4321\).}
	\label{ris:qqp100}
\end{figure}
\begin{table}[!htb]
	\caption{Coverage probabilities. For each \(n\) the first line corresponds to the median value of the coverage probability and the second line corresponds to the interquartile range. Here \(p = 100\) and \(\CM\) has the following properties: \(\rr(\CM) = 6.12\) and \(\mu_6, \ldots, \mu_{100}\) are distributed according to Marchenko-Pastur's density with the support on \([0.71, 1.34]\). All other eigenvalues are \(\mu_{1} = 25.698, \mu_{2} = 15.7688, \mu_{3} = 10.0907, \mu_{4} = 5.9214, \mu_{5} = 3.4321\).}
	\label{tab 2}
	\begin{tabular}{crrrrrc}
		\hline
		\multicolumn{7}{c}{\qquad\qquad Confidence levels}\\
		\cline{2-7}
		\( n \) & \multicolumn{1}{c}{0.99} & \multicolumn{1}{c}{0.95} & \multicolumn{1}{c}{0.90} & \multicolumn{1}{c}{0.85} &
		\multicolumn{1}{c}{0.80} &
		\multicolumn{1}{c}{0.75}\\
		\hline
		100	& 0.992	&0.961	&0.918	&0.876	&0.825	&0.768\\			
		& 0.027	&0.091	&0.146	&0.197	&0.231	&0.257\\
		300	& 0.988	&0.942	&0.886	&0.832	&0.784	&0.735\\       
		& 0.020	&0.062	&0.094	&0.118	&0.139	&0.153\\
		500	& 0.995	&0.966	&0.925	&0.876	&0.822	&0.771\\            
		& 0.013	&0.035	&0.072	&0.104	&0.120	&0.122\\
		1000      & 0.989	&0.957	&0.906	&0.848	&0.795	&0.743\\            
		& 0.012	&0.038	&0.062	&0.086	&0.093	&0.098\\
		2000	& 0.993	&0.958	&0.913	&0.869	&0.819	&0.775\\
		& 0.011	&0.028	&0.053	&0.065	&0.076	&0.083\\
		3000     & 0.988	&0.952	&0.902	&0.853	&0.803	&0.752\\            
		& 0.006	&0.021	&0.047	&0.053	&0.062	&0.070
		\\ \hline
	\end{tabular}
\end{table}

The third example has the same setup as the previous one except \( \mu_{1} = \mu_{2} = 25.698 \). 
In that case \( \PP_{1} = \uu_{1}\uu_{1}^{\T} + \uu_{2}\uu_{2}^{\T} \). Here \(\gu_{1} = 9,93\)  and \(\rr(\CM) = 6.51\). The result is on Figure~\ref{ris:qq100r12} and Table~\ref{tab 3}.
\begin{figure}[!htb] 
	\includegraphics[width=1\textwidth,height=0.4\textheight]{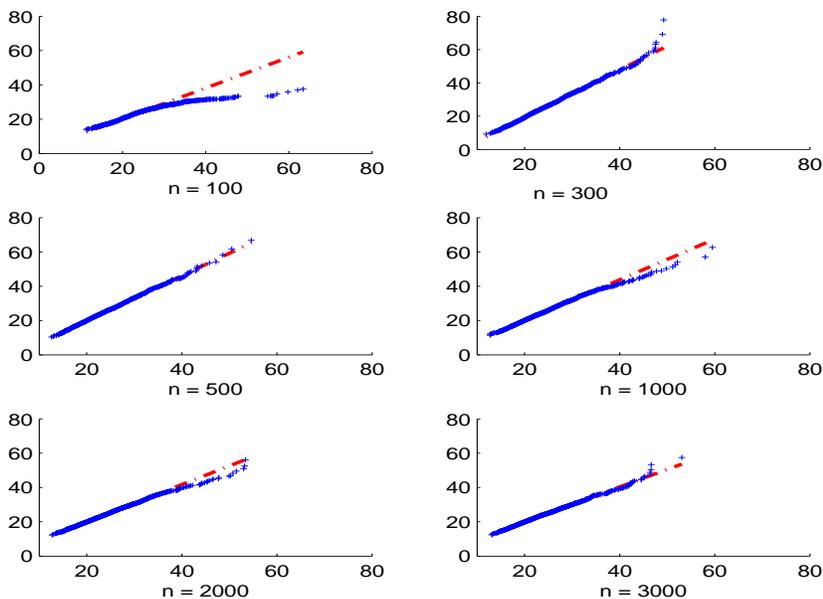}
	\caption[]{QQ-plot of the bootstrap procedure. Here \(p = 100\) and \(\CM\) has the following properties: \(\rr(\CM) = 6.51\) and \(\mu_6, \ldots, \mu_{100}\) are distributed according to Marchenko-Pastur's density with the support on \([0.71, 1.34]\). All other eigenvalues are \(\mu_{1} = \mu_{2} = 25.698, \mu_{3} = 10.0907, \mu_{4} = 5.9214, \mu_{5} = 3.4321\).}
	\label{ris:qq100r12}
\end{figure}
\begin{table}[!htb]
	\caption{Coverage probabilities. For each \(n\) the first line corresponds to the median value of the coverage probability and the second line corresponds to the interquartile range. Here \(p = 100\) and \(\CM\) has the following properties: \(\rr(\CM) = 6.51\) and \(\mu_6, \ldots, \mu_{100}\) are distributed according to Marchenko-Pastur's density with the support on \([0.71, 1.34]\). All other eigenvalues are \(\mu_{1} = \mu_{2} = 25.698, \mu_{3} = 10.0907, \mu_{4} = 5.9214, \mu_{5} = 3.4321\).}
	\label{tab 3}
	\begin{tabular}{crrrrrc}
		\hline
		\multicolumn{7}{c}{\qquad\qquad Confidence levels}\\
		\cline{2-7}
		\( n \) & \multicolumn{1}{c}{0.99} & \multicolumn{1}{c}{0.95} & \multicolumn{1}{c}{0.90} & \multicolumn{1}{c}{0.85} &
		\multicolumn{1}{c}{0.80} &
		\multicolumn{1}{c}{0.75}\\
		\hline
		100	&0.999	&0.991	&0.972	&0.939	&0.906	&0.858\\			
		&0.003	&0.015	&0.035	&0.059	&0.089	&0.114\\
		300	&0.999	&0.981	&0.950	&0.919	&0.873	&0.816\\            
		&0.003	&0.023	&0.053	&0.075	&0.114	&0.144\\
		500	&0.998	&0.977	&0.947	&0.914	&0.867	&0.820\\            
		&0.005	&0.020	&0.041	&0.057	&0.087	&0.106\\
		1000	&0.992	&0.971	&0.937	&0.895	&0.855	&0.796\\           
		&0.010	&0.031	&0.061	&0.073	&0.105	&0.129\\
		2000	&0.990	&0.958	&0.911	&0.866	&0.824	&0.774\\           
		&0.006	&0.016	&0.024	&0.034	&0.052	&0.055\\
		3000	&0.989	&0.950	&0.897	&0.852	&0.795	&0.749\\           
		&0.004	&0.022	&0.034	&0.049	&0.061	&0.064
		\\ \hline
	\end{tabular}
\end{table}

In all three examples we observe the same paterns.
The bootstrap procedure mimics well the most of the underlying distribution of 
\( n\|\PS_{1} - \PP_{1}\|_{2}^{2} \).
For a really small sample size \( n=100 \), there is a problem of approximating the high quantiles,
while for \( n \) of order 300 or larger, it works surprisingly well in different setups and for different
dimensions \( p \) including the case with \( p > n \).

\clearpage
\section{Proofs}\label{SproofsPP}
This section presents the proof of the main theorem as well as some further statements. Before going  to the proof we outline its  main steps. In Section~\ref{gap real world} we show that
\begin{EQA}
	\text{\(\X\) - world: } 
	&\qquad &
	n\|\PS_{r} - \PP_{r}\|_{2}^{2}  \approx \|\xi\|_{2}^{2}, \quad \xi \sim \ND(0, \Gamma_{r}),
\end{EQA}
where \(\Gamma_{r}\) defined in (\refeq{eq:cov-matrix-gamma}). 
Further, in Section~\ref{gap bootstrap world} we demonstrate that the similar relation holds in the bootstrap world, namely
\begin{EQA}
	\text{Bootstrap world: } \quad 
	n\|\PB_{r} - \PS_{r}\|_{2}^{2}  
	&\approx &
	\|\xib\|_{2}^{2}, \quad \xib \sim \ND(0, \Gammab_{r}),
\end{EQA}
where  \(\Gammab_{r}\) is defined below in~(\refeq{eq: cov matrix gamma bootstrap}). 
To compare \(\xi\) and \(\xib\)  we apply Gaussian comparison inequality, Lemma~\ref{l: explicit gaussian comparison}. The details are in Section~\ref{gauss comparison}.  All necessary concentration inequalities for sample covariances in the \(\X\) - world and bootstrap world may be found in the Appendix~\ref{SappendPP} and Section~\ref{concentration bootstrap world} respectively. 

In all our results, we implicitly assume 
\begin{EQA} 
\label{eq: cond}
	\frac{\Tr \CM}{\gu_{r}} \left(\sqrt{\frac{\log n}{n}} + \sqrt{\frac{\log p}{n}} \right) 
	&\lesssim &
	1 .
\end{EQA}
Otherwise, the main result becomes trivial. 

\subsection{Concentration inequalities for covariance matrices and spectral projectors}\label{concentration bootstrap world}

\begin{theorem}\label{th: concentration for sample cov bootstrap world}
	Assume that the conditions of Theorem~\ref{th: main} hold. Then the following inequality holds with \(\Pb\)-probability at least \(1 - \frac{1}{n} \)
	\begin{EQA}
		\Pbb\left(
			\|\SCb - \SC\| \lesssim 
			\Tr \CM \left[ \sqrt{\frac{\log n}{n}} \bigvee \sqrt{\frac{\log p}{n}} \right]
		\right) 
		&\geq &
		1 - \frac{1}{n}.
	\end{EQA}
\end{theorem}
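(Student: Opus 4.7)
The starting point is the identity
$$\SCb - \SC = \frac{1}{n}\sum_{i=1}^{n} (w_{i}-1) X_{i} X_{i}^{\T},$$
where $w_{i} - 1 \sim \ND(0,1)$ are i.i.d.\ and independent of $\X$. Conditional on $\X$, the matrix $\SCb - \SC$ is therefore a matrix Gaussian series in the deterministic symmetric summands $A_{i} \eqdef n^{-1} X_{i} X_{i}^{\T}$. The plan is to apply a matrix Gaussian concentration inequality in the bootstrap world, and then control the resulting random variance proxy by standard tail bounds on $X_{i}$ and $\SC$ in the $\X$-world.

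Step~1 (bootstrap world). By Tropp's matrix Gaussian series bound (recorded in Appendix~\ref{SappendPP}), for every $t>0$,
$$\Pbb\bigl(\|\SCb-\SC\| \ge t\bigr) \le 2p \exp\Bigl(-\frac{t^{2}}{2\sigma_{\X}^{2}}\Bigr), \qquad \sigma_{\X}^{2} \eqdef \Bigl\|\sum_{i=1}^{n} A_{i}^{2}\Bigr\|.$$
Equating the right-hand side to $1/n$ yields $\|\SCb-\SC\| \lesssim \sigma_{\X}\bigl(\sqrt{\log p}+\sqrt{\log n}\bigr)$ with $\Pbb$-probability at least $1-1/n$. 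Step~2 (variance proxy): since $A_{i}^{2} = n^{-2}\|X_{i}\|^{2} X_{i} X_{i}^{\T}$,
$$\sigma_{\X}^{2} = \frac{1}{n^{2}}\Bigl\|\sum_{i=1}^{n} \|X_{i}\|^{2} X_{i} X_{i}^{\T}\Bigr\| \le \frac{\max_{i\le n}\|X_{i}\|^{2}}{n}\, \|\SC\|.$$

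Step~3 ($\X$-world). For Gaussian $X_{i}\sim \ND(0,\CM)$, Hanson–Wright combined with a union bound over $i\le n$ gives $\max_{i}\|X_{i}\|^{2} \lesssim \Tr\CM + \|\CM\|\log n$ with $\Pb$-probability at least $1-1/(2n)$; the Koltchinskii–Lounici bound for $\|\SC - \CM\|$ quoted in the appendix gives $\|\SC\|\lesssim \|\CM\|$ with $\Pb$-probability at least $1-1/(2n)$. On the intersection one obtains $\sigma_{\X}^{2} \lesssim \|\CM\|\,\Tr\CM/n$, using that under the running assumption~(\refeq{eq: cond}) the subdominant term $\|\CM\|\log n$ is absorbed into $\Tr\CM$. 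Plugging this back into Step~1 and using $\sqrt{\|\CM\|\,\Tr\CM} \le \Tr\CM$ yields
$$\|\SCb - \SC\| \lesssim \sqrt{\tfrac{\|\CM\|\,\Tr\CM}{n}}\bigl(\sqrt{\log p}+\sqrt{\log n}\bigr) \lesssim \Tr\CM\,\Bigl(\sqrt{\tfrac{\log n}{n}}\vee \sqrt{\tfrac{\log p}{n}}\Bigr),$$
which is the claimed inequality, with total $\Pb$-probability at least $1-1/n$ that the bootstrap statement holds.

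The main obstacle is Step~3: cleanly reducing $\sigma_{\X}^{2}$ to the advertised $\|\CM\|\,\Tr\CM/n$. The Hanson–Wright tail on $\|X_{i}\|^{2}$ carries subdominant pieces $\|\CM\|_{F}\sqrt{\log n}$ and $\|\CM\|\log n$, which must be absorbed into $\Tr\CM$; this absorption is exactly what condition~(\refeq{eq: cond}) permits. Moreover, $\|\SC\|$ and $\max_{i}\|X_{i}\|^{2}$ are correlated (both involve the sample), so the union bound has to be taken carefully, and this is where the factor $\sqrt{\log n/n}$ (as opposed to only $\sqrt{\log p/n}$) enters the final bound.
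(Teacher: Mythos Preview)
Your Steps~1 and~2 match the paper exactly: write $\SCb-\SC$ as a matrix Gaussian series and apply Tropp's bound, reducing the problem to controlling $\sigma_{\X}^{2}=\bigl\|\sum_{i}A_{i}^{2}\bigr\|$. The divergence is in Step~3.

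Your absorption claim is wrong. You assert that condition~(\refeq{eq: cond}) lets you absorb $\|\CM\|\log n$ into $\Tr\CM$, i.e.\ that $\log n\lesssim\rr(\CM)$. But (\refeq{eq: cond}) reads $\Tr\CM\,\sqrt{\log n/n}\lesssim\gu_{r}\le\|\CM\|$, which gives an \emph{upper} bound $\rr(\CM)\lesssim\sqrt{n/\log n}$, not a lower bound. A concrete counterexample: take $p=2$, $\CM=\mathrm{diag}(2,1)$, so $\rr(\CM)=3/2$; for large $n$, (\refeq{eq: cond}) holds comfortably while $\|\CM\|\log n=2\log n\gg 3=\Tr\CM$. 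Without the absorption your bound becomes $\sigma_{\X}^{2}\lesssim(\Tr\CM+\|\CM\|\log n)\|\CM\|/n$, and after multiplying by $(\sqrt{\log n}+\sqrt{\log p})$ you pick up an extra $\sqrt{\log n}$ in the regime $\rr(\CM)<\sqrt{\log n}$, so the stated theorem is not recovered.

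The paper avoids this by controlling $\sigma^{2}=\bigl\|\sum_{k}\A_{k}^{2}\bigr\|$ (with $\A_{k}=X_{k}X_{k}^{\T}$) differently: rather than the crude factorisation $\max_{i}\|X_{i}\|^{2}\cdot\|\SC\|$, it applies the matrix Bernstein inequality directly to $\sum_{k}(\A_{k}^{2}-\E\A_{k}^{2})$, using $\|\E\A_{1}^{2}\|\le\E\|X\|^{4}\lesssim\Tr^{2}\CM$ for the mean, $\|\sum_{k}\E(\A_{k}^{2}-\E\A_{k}^{2})^{2}\|\le n\E\|X\|^{8}\lesssim n\Tr^{4}\CM$ for the variance, and a high-probability bound $\|\A_{k}^{2}\|\lesssim\Tr^{2}\CM\cdot\log^{2}n$ for the uniform term. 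This yields $\sigma^{2}\lesssim n\Tr^{2}\CM$ cleanly, hence $\sigma_{\X}^{2}\lesssim\Tr^{2}\CM/n$ with no spurious log factor. Your shortcut is more elementary and, when $\rr(\CM)\gtrsim\sqrt{\log n}$, actually gives the sharper $\sigma_{\X}^{2}\lesssim\|\CM\|\Tr\CM/n$; but it does not cover the small-effective-rank regime, which is precisely where the paper's matrix-Bernstein step earns its keep.
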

\begin{proof}
	We prove this theorem applying a combination of matrix concentration inequalities. For simplicity we denote \(\xi_{i} \eqdef w_{i} - 1\) and \(\A_{i} \eqdef X_{i} X_{i}^{\t}\) for all \(i = 1, \ldots, n\). It is easy to see that \(\SCb - \SC\) is a Gaussian matrix series. Indeed,
	\begin{EQA}
		\SCb - \SC 
		&=& 
		\frac{1}{n} \sum_{k=1}^{n} \xi_{k} \A_{k}.
	\end{EQA}
	Hence, to estimate \(\|\SCb - \SC \|\) we may directly apply Lemma~\ref{matrix series}, which gives us that 
	\begin{EQA}
		\label{eq: matrix concentration 1}
		\Pbb\left(
			\left\|\frac{1}{n}\sum_{k=1}^{n} \xi_{k} \A_{k} \right\| 
			\lesssim 
			\sigma \frac{\sqrt{\log n} + \sqrt{\log p}}{n}\right) 
		&\geq &
		1- \frac{1}{n},
	\end{EQA}
	where \(\sigma^{2} \eqdef \big\|\sum_{k=1}^{n} \A_{k}^{2} \big\|\). To finish the proof it remains to estimate with high \(\Pb\)-probability the variance parameter \(\sigma\). This may be done by using the Bernstein matrix concentration inequality, Lemma~\ref{matrix bernstein}. To proceed we need to check all  assumptions of Lemma~\ref{matrix bernstein}. Applying Lemma~\ref{th: Rosenthal for sub-exp from 1 to 2} with \(p = 2\) we may show that
	\begin{EQA}
		\label{eq: expectation of Ak2}
		\E \|\A_{k}\|^{2} \leq \E\|X_{k}\|^{4} 
		&\lesssim &
		\Tr^{2} \CM.
	\end{EQA}
	Moreover, application of the same lemma with \(p \asymp \log^{2} n\) gives us that
	\begin{EQA}
		\label{eq:tail-of-Ak2}
		\Pb(\|\A_{k}\|^{2} \lesssim  \Tr^{2}(\CM) \log^{2} n ) 
		&\geq & 
		1 - \frac{1}{n}.
	\end{EQA}
	Introduce the following event:
	\begin{EQA}
		\Eset_{1} 
		&\eqdef &		
		\bigl\{ 
			\max_{1 \leq k \leq n} \|\A_{k}^{2} - \E \A_{k}^{2}\| \lesssim \Tr^{2}(\CM) \log^{2} n
		\bigr\}.
	\end{EQA}
	It follows from~(\refeq{eq: expectation of Ak2})--(\refeq{eq:tail-of-Ak2}) and the union bound that \(\Pb(\Eset_{1}^{c} ) \leq \frac{1}{n}\). Introduce the following variance parameter
	\begin{EQA}
		\sigmat^{2} 
		&\eqdef &
		\big\|\sum_{k=1}^{n} \E (\A_{k}^{2} - \E \A_{k}^{2})^{2} \big\|.
	\end{EQA}
	Analogously to~(\refeq{eq: expectation of Ak2}) one may show that \(\sigmat^{2} \leq n \E\|\A_{1}\|^{4} \leq n \E\|X\|^{8} \lesssim n  \Tr^4 \CM \). Applying Lemma~\ref{matrix bernstein} we get
	\begin{EQA}
		\label{eq: concentration of A_{k}^{2}}
		&&
		\Pb \left( \big\|\sum_{k=1}^{n} (\A_{k}^{2} - \E \A_{k}^{2})\big\| \gtrsim \sqrt{n} \Tr^{2}(\CM) (\sqrt{\log n}+ \sqrt{\log p}) \right)  \\
		& \leq &
		\Pb \left( \big\|\sum_{k=1}^{n} (\A_{k}^{2} - \E \A_{k}^{2})\big\|  \gtrsim \sqrt{n} \Tr^{2}(\CM) (\sqrt{\log n}+ \sqrt{\log p}), \Eset_{1} \right) + \frac{1}{n}  \\
		&\leq& \frac{2}{n}.  
	\end{EQA}
	Combining~(\refeq{eq: expectation of Ak2}) and~(\refeq{eq: concentration of A_{k}^{2}}) we may write that with \(\Pb\)-probability at least \(1 - \frac{1}{n}\)
	\begin{EQA}
		\sigma^{2} 
		&\lesssim &
		n \Tr^{2}\CM + \sqrt{n}(\sqrt{\log n}
		+ \sqrt{\log p}) \Tr^{2}\CM  \lesssim n \Tr^{2} \CM.
	\end{EQA}
	Substituting the last inequality to~(\refeq{eq: matrix concentration 1}) we finish the proof of this theorem.
\end{proof}

Let us introduce the following notations
\begin{EQA}
	\EEb 
	&\eqdef &
	\SCb - \CM, \quad \EEhb \eqdef \SCb - \SC, \quad \EEh = \SC - \CM.
\end{EQA}
Denote
\begin{EQA}
	L_{r}(\EEhb) 
	&\eqdef &
	\PP_{r} (\SCb - \SC) \CC_{r} + \CC_{r} (\SCb - \SC) \PP_{r},
\end{EQA}
where 
\begin{EQA}
	\CC_{r} 
	&\eqdef &
	\sum_{s  \neq r} \frac{1}{\mu_{r} - \mu_{s}} \PP_{s}.
\end{EQA}

\begin{theorem}[Concentration results in the bootstrap world]
	\label{th: concentration bootstrap world}
	Assume that the conditions of Theorem~\ref{th: main} hold. Then the following bound holds with \(\Pb\)-probability at least \(1 - \frac{1}{n}\)
	\begin{EQA}
		\label{eq: bound 3b}
		\Pbb\left(
			\bigl|\|\PB_{r} - \PS_{r}\|_{2}^{2} - \|L_{r}(\EEhb)\|_{2}^{2} \bigr| \lesssim  \Delta 
		\right) 	
		&\geq &
		1 - \frac{1}{n},
	\end{EQA}
	where
	\begin{EQA}
		\label{eq: delta def}
		\Delta 
		&\eqdef &		
		m_{r} \frac{\Tr^3\CM)}{\gu_{r}^{3}} 
		\left[ \frac{\log n}{n} \bigvee \frac{\log p}{n} \right]^{3/2}. 
	\end{EQA}
\end{theorem}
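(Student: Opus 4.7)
The plan is to apply a Kato-type perturbation expansion for spectral projectors twice, once in the \(\X\)-world and once in the bootstrap world, both centered at the true covariance \(\CM\), and then subtract to isolate the linear term \(L_r(\EEhb)\). Concretely, provided the spectral gap condition \(\|\EE\| < \gu_r/4\) is in force, the projector associated with the \(r\)-th eigenvalue cluster of \(\CM + \EE\) admits the second-order expansion \(\PP_r + L_r(\EE) + S_r(\EE)\) with \(L_r(\EE) = \PP_r\EE\CC_r + \CC_r\EE\PP_r\) and a Koltchinskii--Lounici-type remainder bound \(\|S_r(\EE)\|_2 \lesssim \sqrt{m_r}\,\|\EE\|^2/\gu_r^2\). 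Applying this with \(\EE = \EEh\) describes \(\PS_r\), and with \(\EE = \EEb \eqdef \SCb - \CM\) describes \(\PB_r\). Since \(\EEb - \EEh = \EEhb\) and \(L_r\) is linear in its argument,
\[
	\PB_r - \PS_r \;=\; L_r(\EEhb) + R, \qquad R \eqdef S_r(\EEb) - S_r(\EEh).
\]

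With this decomposition in hand, the target difference of squared Frobenius norms splits as
\[
	\|\PB_r - \PS_r\|_2^2 - \|L_r(\EEhb)\|_2^2 \;=\; 2\langle L_r(\EEhb), R\rangle + \|R\|_2^2,
\]
so by Cauchy--Schwarz it suffices to control \(\|L_r(\EEhb)\|_2\,\|R\|_2\) and \(\|R\|_2^2\). Using \(\|\CC_r\| \le \gu_r^{-1}\) and \(\Rank\PP_r = m_r\) gives \(\|L_r(\EEhb)\|_2 \lesssim \sqrt{m_r}\,\|\EEhb\|/\gu_r\), while the quadratic remainder bound yields \(\|R\|_2 \lesssim \sqrt{m_r}\,(\|\EEh\|^2 + \|\EEb\|^2)/\gu_r^2\). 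Hence the cross-term is at most a constant times \(m_r\,\|\EEhb\|(\|\EEh\|^2 + \|\EEb\|^2)/\gu_r^3\).

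To finish, I would insert the concentration bounds. On a \(\Pb\)-event of probability at least \(1 - 1/n\), the standard appendix bound on \(\|\EEh\|\) of Koltchinskii--Lounici type gives \(\|\EEh\| \lesssim \Tr\CM\,\sqrt{\log(n\vee p)/n}\); on this event Theorem~\ref{th: concentration for sample cov bootstrap world} delivers the matching bootstrap-world bound \(\|\EEhb\| \lesssim \Tr\CM\,\sqrt{\log(n\vee p)/n}\) with \(\Pbb\)-probability at least \(1 - 1/n\), and \(\|\EEb\| \le \|\EEh\| + \|\EEhb\|\) obeys the same estimate. Under condition~(\refeq{eq: cond}) the gap \(\gu_r/4\) is respected in both worlds, justifying the expansion. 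Substituting these estimates,
\[
	\|L_r(\EEhb)\|_2\,\|R\|_2 \;\lesssim\; \frac{m_r\,\Tr^3 \CM}{\gu_r^3}\,\left(\frac{\log n}{n}\,\bigvee\,\frac{\log p}{n}\right)^{3/2} \;=\; \Delta,
\]
and \(\|R\|_2^2\) contributes only a lower-order term by~(\refeq{eq: cond}).

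The main obstacle is establishing the second-order remainder bound \(\|S_r(\EE)\|_2 \lesssim \sqrt{m_r}\,\|\EE\|^2/\gu_r^2\) with the correct \(\sqrt{m_r}\)-dependence (rather than a worse polynomial in the ambient dimension) and ensuring the spectral-gap condition holds simultaneously in both worlds with the required probabilities; the latter hinges on coupling the bootstrap-world concentration of \(\|\SCb - \CM\|\) to the real-world event controlling \(\|\EEh\|\), which is exactly the content of Theorem~\ref{th: concentration for sample cov bootstrap world} combined with the appendix bounds.
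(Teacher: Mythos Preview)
Your proposal is correct and follows essentially the same route as the paper: apply the Koltchinskii--Lounici expansion (Lemma~\ref{decomposition real world}) at \(\CM\) to both \(\PB_r\) and \(\PS_r\), subtract to obtain \(\PB_r-\PS_r = L_r(\EEhb) + R\), expand the Frobenius-norm difference, apply Cauchy--Schwarz, and plug in the concentration bounds from Theorems~\ref{th: concentration for sample cov bootstrap world} and~\ref{th: concentration for sample cov real world} together with condition~(\refeq{eq: cond}). The ``main obstacle'' you flag is not actually an obstacle: the operator-norm remainder bound \(\|S_r(\EE)\| \le 14(\|\EE\|/\gu_r)^2\) is exactly Lemma~\ref{decomposition real world}, and the \(\sqrt{m_r}\) factor in the Frobenius norm comes for free from the rank bound \(\Rank S_r \lesssim m_r\) (since \(\PB_r,\PS_r,\PP_r\) all have rank \(m_r\) on the relevant event and \(L_r(\EEhb)\) has rank at most \(2m_r\)).
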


\begin{proof}
	Applying Lemma~\ref{decomposition real world} we may write 
	\begin{EQA}
		\PB_{r} - \PS_{r} 
		&=& 
		\PB_{r} - \PP_{r} - (\PS_{r} - \PP_{r}) = L_{r}(\EEb) - L_{r}(\EEh) + S_{r}(\EEb) + S_{r}(\EEh),
	\end{EQA}
	where
	\begin{EQA}
		\label{eq: Sr bound}
		\|S_{r}(\EEh)\| &\leq &
		14 \left(\frac{\|\EEh\|}{\gu_{r}} \right)^{2}, \quad	
		\|S_{r}(\EEb)\| \leq 14 \left(\frac{\|\EEb\|}{\gu_{r}} \right)^{2}.
	\end{EQA}
	It is easy to see that
	\begin{EQA}
		L_{r}(\EEb) - L_{r}(\EE) &=& L_{r}(\EEhb).
	\end{EQA}
	Let us also denote \(S_{r}(\EEhb) \eqdef S_{r}(\EEb) + S_{r}(\EEh)\). 
	We may rewrite the difference \(\|\PB_{r} - \PS_{r}\|_{2}^{2} - \|L_{r}(\EEhb)\|_{2}^{2}\) in the following way:
	\begin{EQA}
		\|\PB_{r} - \PS_{r}\|_{2}^{2} - \|L_{r}(\EEhb)\|_{2}^{2} 
		&=& 
		2\langle L_{r}(\EEhb), S_{r}(\EEhb )\rangle +\|S_{r}(\EEhb ) \|_{2}^{2}.
	\end{EQA}
	Applying the Cauchy--Schwarz inequality we get
	\begin{EQA}
		\label{eq: Cauchy--Schwarz inequality for difference}
		\left| \|\PB_{r} - \PS_{r}\|_{2}^{2} - \|L_{r}(\EEhb)\|_{2}^{2} \right| 
		&\leq &
		2\|L_{r}(\EEhb)\|_{2} \| S_{r}(\EEhb )\|_{2} + \|S_{r}(\EEhb ) \|_{2}^{2}.
	\end{EQA}
	It follows from~(\refeq{eq: Sr bound})
	\begin{EQA}
		\|S_{r}(\EEb)\| 
		&\lesssim & 
		\left( \frac{\|\EEh\|}{\gu_{r}} \right)^{2} +   \left(\frac{\|\EEb\|}{\gu_{r}} \right)^{2}
		\lesssim 
		\left(\frac{\|\EEh\|}{\gu_{r}} \right)^{2} +   \left(\frac{\|\EEhb\|}{\gu_{r}} \right)^{2}.
	\end{EQA}
	From Theorems~\ref{th: concentration for sample cov bootstrap world}, 
	\ref{th: concentration for sample cov real world}, and condition~(\refeq{eq: cond}) we may assume that without loss of generality that the following inequality holds
	\begin{EQA}
		\max\{\|\EEb\|, \|\EEh\|\} 
		&\leq &
		\frac{\gu_{r}}{2}.
	\end{EQA}
	This fact guarantees that \(\Rank \PB_{r} = \Rank \PS_{r} = \Rank \PP_{r} = m_{r}\).  
	Applying~(\refeq{eq: Sr bound}) and Theorems~\ref{th: concentration for sample cov bootstrap world},~\ref{th: concentration for sample cov real world} we get that with \(\Pb\)-probability at least \(1 - \frac{1}{n}\):
	\begin{EQA}
		\label{eq: S_{r} concentration}
		\Pbb \left(
			\|S_{r}(\EEhb ) \|_{2} \lesssim \sqrt{m_{r}} \frac{\Tr^{2} \CM }{\gu_{r}^{2}} 
			\left[ \frac{\log n}{n} \bigvee \frac{\log p}{n} \right] 
		\right) 
		&\geq &
		1 - \frac{1}{n}.
	\end{EQA}
	It remains to estimate \(\|L_{r}(\EEhb)\|_{2}\). We proceed similarly to the proof of Theorem~\ref{th: concentration for sample cov bootstrap world}. We get that with \(\Pb\)-probability at least \(1 - \frac{1}{n}\):
	\begin{EQA}
		\Pbb \left(
			\| L_{r}(\EEhb)\|_{2} \lesssim \sqrt{m_{r}} \frac{\Tr \CM}{\gu_{r}} 
			\left[\sqrt{\frac{\log n}{n}} \bigvee \sqrt{\frac{\log p}{n}} \right]
		\right) 
		&\geq &
		1 - \frac{1}{n}.
	\end{EQA}
	From the last bound and inequalities~(\refeq{eq: Cauchy--Schwarz inequality for difference})--(\refeq{eq: S_{r} concentration}) we conclude that  with \(\Pb\)-probability at least \(1 - \frac{1}{n}\):
	\begin{EQA}
		\Pbb \left( 
			\|\PB_{r} - \PS_{r}\|_{2}^{2} - \|L_{r}(\EEhb)\|_{2}^{2} \lesssim \Delta_{1} \right) 
		&\geq &
		1 - \frac{1}{n}, 
	\end{EQA}
	where
	\begin{EQA}
		\Delta_{1}^{*} 
		& \eqdef &
		m_{r} \frac{\Tr^3 \CM}{\gu_{r}^{3}} 
		\left[ \frac{\log n}{n} \bigvee \frac{\log p}{n}  \right] \left[\sqrt{\frac{\log n}{n}} \bigvee \sqrt{\frac{\log p}{n}}\right] + m_{r} \frac{\Tr^4 \CM }{\gu_{r}^{4}} \left[ \frac{\log n}{n} \bigvee \frac{\log p}{n} \right]^{2}.
	\end{EQA}
	Applying condition~(\refeq{eq: cond}) we get that
	\begin{EQA}
		\Delta_{1}^{*} 
		&\leq &
		m_{r} \frac{\Tr ^3\CM)}{\gu_{r}^{3}} 
		\left[ \frac{\log n}{n} \bigvee \frac{\log p}{n} \right]^{3/2}.
	\end{EQA}
\end{proof}

\subsection{Approximation in the \(\X\) - world}\label{gap real world}

The main result of this section is the following theorem.

\begin{theorem}\label{th: GAP real world}
	Assume that the conditions of Theorem~\ref{th: main} hold. Let   \(\xi \sim \ND(0, \Gamma_{r})\), where \(\Gamma_{r}\) is defined in~(\refeq{eq:cov-matrix-gamma}). Then for all \(x: x > 0\)  the following bounds hold
	\begin{EQA}
		\Pb(n\|\PS_{r} - \PP_{r}\|_{2}^{2} > x) 
		& \leq &
		\Pb(\|\xi\|_{2}^{2} \geq x_{-}) + \err_{1}, \\
		\Pb(n\|\PS_{r} - \PP_{r}\|_{2}^{2} > x) 
		& \geq &
		\Pb(\|\xi\|_{2}^{2} \geq x_{+}) - \err_{1},
	\end{EQA}
	where \(x_\pm \eqdef x \pm \err_{2}\) and
	\begin{EQA}
		\err_{1} 
		& \eqdefapp &  
		m_{r}^{1/2} \frac{ \Tr \Gamma_{r}}{\sqrt{\lambda_{1}(\Gamma_{r}) \lambda_{2}(\Gamma_{r})}} \left( \sqrt{\frac{\log n}{n}} + \sqrt{\frac{\log p}{n}} \right), \\
		\err_{2} 
		& \eqdefapp &
		m_{r} \frac{\Tr^3\CM}{\gu_{r}^{3}} \sqrt{\frac{\log^3 n}{ n}}.
	\end{EQA}
\end{theorem}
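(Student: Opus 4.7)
I would mimic the two-step strategy already used in the bootstrap-world analysis of Theorem~\ref{th: concentration bootstrap world}: first a perturbation-theoretic linearization of $\PS_r - \PP_r$ reducing the statistic to the Hilbert--Schmidt norm of a centered sum of i.i.d. quadratic Gaussian chaoses, then a Gaussian approximation of this linear part, combined with an anti-concentration bound for the squared norm of a Gaussian to translate the approximation into Kolmogorov-type probability bounds.

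\textbf{Step 1: linearization.} Apply the Koltchinskii--Lounici perturbation expansion (the same lemma invoked in the proof of Theorem~\ref{th: concentration bootstrap world}) to write
$$\PS_{r} - \PP_{r} \,=\, L_r(\EEh) + S_r(\EEh), \qquad \|S_r(\EEh)\| \le 14\bigl(\|\EEh\|/\gu_r\bigr)^2,$$
with $L_r(\EEh) = \PP_r \EEh \CC_r + \CC_r \EEh \PP_r$. Since $\PP_r\CC_r=\CC_r\PP_r=0$, the map $L_r$ annihilates $\CM$, so $L_r(\EEh)=\frac{1}{n}\sum_{i=1}^n L_r(X_iX_i^{\T})$ is a centered i.i.d. sum of second-order Gaussian chaoses. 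A direct computation in the spectral basis of $\CM$ shows that its Hilbert--Schmidt covariance is exactly the block-diagonal matrix $\Gamma_r$ from~(\refeq{eq:cov-matrix-gamma}).

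\textbf{Step 2: remainder control, producing the shift $\err_2$.} Expand
$$n\|\PS_r-\PP_r\|_2^2 - n\|L_r(\EEh)\|_2^2 = 2n\langle L_r(\EEh), S_r(\EEh)\rangle + n\|S_r(\EEh)\|_2^2,$$
apply Cauchy--Schwarz and $\|\cdot\|_2\le\sqrt{m_r}\,\|\cdot\|$, and insert the sample-covariance concentration bound $\|\EEh\|\lesssim\Tr\CM\bigl(\sqrt{\log n/n}+\sqrt{\log p/n}\bigr)$ of Theorem~\ref{th: concentration for sample cov real world} together with the matching Frobenius bound $\|L_r(\EEh)\|_2\lesssim\sqrt{m_r}\,\Tr\CM/\gu_r\cdot\bigl(\sqrt{\log n/n}+\sqrt{\log p/n}\bigr)$ obtained by applying matrix Bernstein block by block. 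This yields
$$\bigl|n\|\PS_r-\PP_r\|_2^2 - n\|L_r(\EEh)\|_2^2\bigr| \lesssim \err_2$$
on an event of probability at least $1-1/n$, which is precisely the shift $x_\pm = x\pm\err_2$ appearing in the statement.

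\textbf{Step 3: Gaussian approximation of the linear part, producing $\err_1$.} By Step 1 it remains to compare the law of $n\|L_r(\EEh)\|_2^2 = \bigl\|n^{-1/2}\sum_i L_r(X_iX_i^{\T})\bigr\|_2^2$ with that of $\|\xi\|_2^2$, $\xi\sim\ND(0,\Gamma_r)$. I would first use the new sharp Gaussian comparison inequality (Lemma~\ref{l: explicit gaussian comparison}) to bound the Kolmogorov distance between the distribution of $n^{-1/2}\sum_i L_r(X_iX_i^{\T})$ and that of $\xi$ on Euclidean balls, producing a prefactor of order $\sqrt{m_r}\,\Tr\Gamma_r\cdot\bigl(\sqrt{\log n/n}+\sqrt{\log p/n}\bigr)$; then absorb the $\pm\err_2$ shift from Step 2 by the Gaussian anti-concentration bound of Lemma~\ref{band of GE}, which bounds the density of $\|\xi\|_2^2$ by $1/\sqrt{\lambda_1(\Gamma_r)\lambda_2(\Gamma_r)}$. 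Dividing the Gaussian-comparison scale by this density is exactly $\err_1$, and a routine union bound combines everything into the two-sided inequalities of the theorem.

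\textbf{Main obstacle.} The serious difficulty lies in Step 3: a classical multivariate Berry--Esseen bound would carry a factor of the ambient dimension $p$ and would collapse when $p\gg n$. Replacing that dimensional factor by the effective quantity $\sqrt{m_r}\,\Tr\Gamma_r/\sqrt{\lambda_1(\Gamma_r)\lambda_2(\Gamma_r)}$ is what forces the use of the paper's new sharp Gaussian comparison and anti-concentration lemmas. A secondary nuisance is that the summands $L_r(X_iX_i^{\T})$ are second-order Gaussian functionals and hence only subexponential, so the Gaussian-approximation step has to tolerate moment-based rather than purely sub-Gaussian inputs.
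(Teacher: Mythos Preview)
Your Steps~1 and~2 match the paper's argument. The real divergence is in Step~3, and there you have a genuine gap.

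Lemma~\ref{l: explicit gaussian comparison} compares two \emph{Gaussian} vectors. The quantity $n^{-1/2}\sum_i L_r(X_iX_i^{\T})$ is a second-order Gaussian chaos, not a Gaussian, so the lemma cannot be applied to it directly. You recognize this in your ``Main obstacle'' paragraph but do not supply a mechanism to close the gap; invoking ``moment-based inputs'' suggests you are reaching for a Berry--Esseen-type CLT, which is exactly the dimension-dependent route you correctly want to avoid.

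The paper's key idea, which your proposal misses, is a \emph{conditioning trick}. Because $X$ is Gaussian and $\PP_r$, $\CC_r$ project onto orthogonal eigenspaces of $\CM$, the vectors $\PP_r X_i$ and $\CC_r X_i$ are \emph{independent}. Hence, conditionally on $\Yu=(\PP_r X_1,\dots,\PP_r X_n)$, each term $\langle \uu,\PP_r X_i\rangle\langle \CC_r X_i,\vv\rangle$ is linear in the still-random $\CC_r X_i$, so the whole vector $S_r$ is \emph{exactly} Gaussian with a data-dependent covariance $\Gamma_r^{\Yu}$. Now Lemma~\ref{l: explicit gaussian comparison} (via Corollary~\ref{cor 1 gauss}) legitimately compares $\ND(0,\Gamma_r^{\Yu})$ with $\ND(0,\Gamma_r)$; the required control on $\|\Gamma_r^{-1/2}\Gamma_r^{\Yu}\Gamma_r^{-1/2}-\II\|$ is obtained by matrix Bernstein in the $\Yu$-randomness, yielding the factor $\sqrt{m_r}\bigl(\sqrt{\log n/n}+\sqrt{\log p/n}\bigr)$. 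Integrating out the conditioning gives $\err_1$ directly. No CLT is ever invoked.

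A smaller point: in the statement of Theorem~\ref{th: GAP real world} the shift $\err_2$ is \emph{not} absorbed via anti-concentration; it remains in $x_\pm$. Lemma~\ref{band of GE} is only used later, in the proof of Theorem~\ref{th: main}, to convert that residual shift into a probability error.
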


\begin{proof}[Proof of Theorem~\ref{th: GAP real world}]
	Let us fix an arbitrary \(x \geq 0\). Without loss of generality we may assume that \(\err_{1} \lesssim 1\). Otherwise the claim is trivial. This fact implies that the condition~(\refeq{eq: real world cond}) holds.  
	
	Let us rewrite \(\PS_{r} - \PP_{r}\) as follows
	\begin{EQA}
		n\|\PS_{r} - \PP_{r}\|_{2}^{2} 
		&=& 
		2n \| \PP_{r} \EEh \CC_{r}\|_{2}^{2} 
		+ n\|\PS_{r} - \PP_{r}\|_{2}^{2} - 2n\| \PP_{r} \EEh \CC_{r}\|_{2}^{2}.
	\end{EQA}
	Theorem~\ref{th: concentration real world} implies that with probability at least \(1  - \frac{1}{n}\) 
	\begin{EQA}
		\bigl|n\|\PS_{r} - \PP_{r}\|_{2}^{2} - 2n\| \PP_{r} \EEh \CC_{r}\|_{2}^{2}\bigr| 
		&\leq &
		\Delta_{1}^* \eqdefapp  m_{r} \frac{\Tr^3 \CM }{\gu_{r}^{3}} \sqrt{\frac{\log^3 n}{ n}}.
	\end{EQA}
	Hence, we may write down the following two-sided inequalities
	\begin{EQA}
		\Pb(2n \| \PP_{r} \EEh \CC_{r}\|_{2}^{2} \geq x + \Delta_{1}^{*}) - \frac{1}{n}
		& \leq & 
		\Pb(n\|\PS_{r} - \PP_{r}\|_{2}^{2} > x) \\
		& \leq & 
		\Pb(2n \| \PP_{r} \EEh \CC_{r}\|_{2}^{2} \geq x - \Delta_{1}^{*})  + \frac{1}{n}.
	\end{EQA}
	For simplicity we denote \(x_{\pm} \eqdef x_{\pm} \eqdef x \pm \Delta_{1}^*\). Without loss of generality, we consider the case of the upper bound only, i.e. we set \(z \eqdef x_{-}\). Similar calculations are valid for \(x_{+}\).
	
	Let \(\{\ee_{j}\}_{j = 1}^{p}\) be an arbitrary orthonormal basis in \(\R^{p}\). Denote by \(\Psi_{kl} \eqdef \ee_{k} \ee_l^{\T}, l, k = 1, \ldots , p\). 
	Then \(\{\Psi_{kl}\}_{k,l=1}^{p}\) is the orthonormal basis in \(\R^{p\times p}\) with respect to the scalar product given by \(\langle\A, \B\rangle \eqdef \Tr \A \B^{\T}, \A, \B \in \R^{p \times p}\).  
	By Parseval's identity
	\begin{EQA}
		2n \| \PP_{r} \EEh \CC_{r}\|_{2}^{2} 
		&=& 
		2n \sum_{l,k = 1}^{p}  \langle\PP_{r} \EE \CC_{r}, \Psi_{kl}\rangle^{2} 
		= 
		2 n \sum_{l,k = 1}^{p} \langle\PP_{r} \EE \CC_{r} \ee_l, \ee_{k}\rangle^{2}.
	\end{EQA}
	We may set \(\ee_{j} \eqdef \uu_{j}\). Taking into account definition of \(\PP_{r}\) and \(\CC_{r}\) the last equation may be rewritten as follows 
	\begin{EQA}
		2n \| \PP_{r} \EEh \CC_{r}\|_{2}^{2} 
		&=& 
		2 n \sum_{k \in \Delta_{r}} \sum_{s \neq r} \sum_{l \in \Delta_{s}} 
		\langle\PP_{r} \EEh \CC_{r} \uu_l, \uu_{k}\rangle^{2}.
	\end{EQA}
	Let us fix arbitrary \(\uu_{k}, k \in \Delta_{r}\) and \(\uu_l, l \in \Delta_{s}, s \neq r\). For simplicity we denote them by \(\uu\) and \(\vv\) respectively. Then
	\begin{EQA}
		S(\uu, \vv) 
		&\eqdef &
		\sqrt{2n}\langle \PP_{r} \EEh \CC_{r} \vv, \uu\rangle 
		= 
		\sqrt{\frac{2}{n}} \sum_{i=1}^{n} \langle\uu, \PP_{r} X_{i}\rangle \langle\CC_{r} X_{i}, \vv\rangle.
	\end{EQA}
	It is easy to see that \( \langle  \uu, \PP_{r} X_{i} \rangle\) is a Gaussian r.v. with zero mean and variance 
	\( \E \langle  \uu, \PP_{r} X_{i} \rangle^{2} = \langle \uu, \PP_{r} \CM \PP_{r} \uu \rangle  = \mu_{r} \). 
	Then \( \langle  \uu, \PP_{r} X_{i} \rangle 	\myeq  \sqrt{ \mu_{r}} \etau_{\uu, i}\), where 
	\( \etau_{\uu, i}, i = 1, \ldots, n\) are i.i.d. \( \ND(0,1) \). 
	Similarly we may write that 
	\( \langle\CC_{r} X_{i}, \vv\rangle \myeq \sqrt{\frac{\mu_{s}}{(\mu_{r} - \mu_{s})^{2}}} \eta_{\vv, i}\), where \( \eta_{\uu, i}, i = 1, 
	\ldots, n\) are i.i.d. \( \ND(0,1) \). Hence, we obtain
	\begin{EQA}
		S(\uu, \vv) 
		&\myeq &
		\frac{1}{\sqrt{n}} \sum_{i=1}^{n} \sqrt{\frac{2\mu_{s} \mu_{r}}{(\mu_{r} - \mu_{s})^{2}}} \etau_{\uu, i} \eta_{\vv, i}.
	\end{EQA}
	Let us fix another pair \(\tilde{\uu}, \tilde{\vv}\) and investigate the covariance
	\begin{EQA}
		\Gamma((\uu, \vv),  (\tilde{\uu}, \tilde{\vv})) 
		&\eqdef &
		\Cov(S(\uu, \vv), S( \tilde{\uu},  \tilde{\vv})).
	\end{EQA}
	It is straightforward to check that
	\begin{EQA}
		\Gamma((\uu, \vv),  (\tilde{\uu}, \tilde{\vv})) 
		&=& 
		2\langle\CM \PP_{r} \uu, \PP_{r} \tilde{\uu} \rangle \langle\CM \CC_{r} \vv, \CC_{r} \tilde{\vv} \rangle 
		= 
		2 \Gamma_{1}(\uu, \tilde{\uu}) \Gamma_{2}(\vv, \tilde{\vv}),
	\end{EQA}
	where for simplicity we denoted 
	\begin{EQA}
		\Gamma_{1}(\uu, \tilde{\uu}) 
		&\eqdef &
		\langle\CM \PP_{r} \uu, \PP_{r} \tilde{\uu} \rangle, \quad \Gamma_{2}(\vv, \tilde{\vv}) 
		\eqdef 
		\langle\CC_{r} \CM \CC_{r} \vv,  \tilde{\vv} \rangle.
	\end{EQA}  
	Moreover, direct calculations yield that
	\begin{EQA}
		\label{def: gamma1 and gamma2}
		\Gamma_{1}(\uu, \tilde{\uu}) 
		&=& 
		\begin{cases}
			0, & \text{ if } \uu \neq \tilde{\uu}, \\
			\mu_{r}, & \text{ if } \uu = \tilde{\uu},
		\end{cases}
		\quad
		\Gamma_{2}(\vv, \tilde{\vv}) = 
		\begin{cases}
			0, & \text{ if } \vv \neq \tilde{\vv},  \\
			\frac{\mu_{s}}{(\mu_{r} - \mu_{s})^{2}}, & \text{ if } \vv = \tilde{\vv}.
		\end{cases}
	\end{EQA}
	We may think of \(S_{r} \eqdef (S(\uu_{k}, \uu_l), k \in \Delta_{r}, s \neq r, l \in \Delta_{s})\) as a random vector in the dimension \(d \eqdef m_{r} \sum_{s \neq r} m_{s} \) (it is easy to see that \( d \asymp p\)) with the following covariance matrix \(\Gamma_{r}\) (compare with~(\refeq{eq:cov-matrix-gamma})):
	\begin{EQA}	
		\Gamma_{r} 
		&\eqdef &
		\begin{pmatrix}
			\Gamma_{r 1} & \OO & \ldots & \OO \\
			\OO & \Gamma_{r 2} & \OO \ldots & \OO\\
			\ldots \\
			\OO & \ldots & \OO & \Gamma_{r q}
		\end{pmatrix},
	\end{EQA}
	where \( \Gamma_{rs} = \frac{2\mu_{r} \mu_{s}}{(\mu_{r} - \mu_{s})^{2}} \II_{m_{r} m_{s}}, s \neq r \),
	are diagonal matrices of order \( m_{r} m_{s}\times m_{r} m_{s} \) with values \( \frac{2\mu_{r} \mu_{s}}{(\mu_{r} - \mu_{s})^{2}} \) on the main diagonal. In these notations we may write
	\begin{EQA}
		\Pb\left( 2n\|\PP_{r} \EEh \CC_{r}\|_{2}^{2} \geq z \right) 
		&=& 
		\Pb\left( \|S_{r}\|^{2} \geq z \right).
	\end{EQA}
	Since \(\PP_{r} X_{i}\) and \(\CC_{r} X_{i}\) are independent Gaussian vectors it is straightforward to check that 
	the conditional distribution of \(S_{r}\) with respect to \( \Yu = (\PP_{r} X_{1}, \ldots, \PP_{r} X_n)\) is Gaussian with zero mean and covariance matrix \(\Gamma_{r}^{\Yu} = \frac{1}{n} \sum_{i=1}^{n} \Gamma_{ri}^{\Yu}\), where 
	\(\Gamma_{ri}^{\Yu}  \eqdef [\Gamma_{ri}^{\Yu}((\uu_{k_{1}}, \uu_{l_{1}}), (\uu_{k_{2}}, \uu_{l_{2}})), k_{1}, k_{2} \in
	\Delta_{r}, l_{1} \in \Delta_{s_{1}}, l_{2} \in \Delta_{s_{2}}, s_{1}, s_{2} \neq r]\) and
	\begin{EQA}
		\Gamma_{ri}^{\Yu}((\uu_{k_{1}}, \uu_{l_{1}}), (\uu_{k_{2}}, \uu_{l_{2}}))
		&=&  
		2 \mu_{r} \etau_{\uu_{k_{1}},i} \etau_{\uu_{k_{2}},i} \Gamma_{2}(\uu_{l_{1}}, \uu_{l_{2}}). 
	\end{EQA}
	Due to~(\refeq{def: gamma1 and gamma2}) we conclude that \( \Gamma_{r}^{\Yu}((\uu_{k_{1}}, \uu_{l_{1}}), (\uu_{k_{2}}, \uu_{l_{2}})) = 0\) if \( l_{1} \neq l_{2}\). Let \(\Pb(\cdot\cond  \Yu)\) be the conditional probability w.r.t. \(\Yu\). 
	We show that \(\Pb(\|S_{r}\|^{2}\geq z \cond \Yu)\) may be approximated by \(\Pb(\|\xi\|^{2}  \geq z)\), where \(\xi \sim \ND(0, \Gamma_{r})\).  
	For this aim we may apply Corollarly~\ref{cor 1 gauss}. Hence, we need to check that 
	\(\| \Gamma_{r}^{-\frac{1}{2}} \Gamma_{r}^{\Yu} \Gamma_{r}^{-\frac{1}{2}} - \II \|\) is small.  
	Let us denote by \(\Pb_{\Yu}(\cdot)\) the distribution of \(\Yu\), i.e. 
	\(\Pb_{\Yu}(A) = \Pb(\Yu \in A), A \in \mathfrak B(\R^{p})\). 
	We also introduce the following event
	\begin{EQA}
		\Eset_{1}(\delta)  
		&\eqdef &
		\bigl\{ 
		\Yu \colon \| \Gamma_{r}^{-\frac{1}{2}} \Gamma_{r}^{\Yu} \Gamma_{r}^{-\frac{1}{2}} - \II \|
		\leq \delta
		\bigr\}, 
		\quad \delta > 0.
	\end{EQA}
	If \( \max_{1 \le k \le n} \| \Gamma_{r}^{-\frac{1}{2}} \Gamma_{rk}^{\Yu} \Gamma_{r}^{-\frac{1}{2}} - \II \| \le R \) for some \( R = R(n, \Gamma_{r})\), then it follows from Lemma~\ref{matrix bernstein} that
	\begin{EQA}
		\label{eq: berstein inequality 1}
		\Pb_{\Yu}\left(\left\| \frac{1}{n}\sum_{i=1}^{n} \bigl( \Gamma_{r}^{-\frac{1}{2}} \Gamma_{rk}^{\Yu} \Gamma_{r}^{-\frac{1}{2}} - \II\bigr) \right\| \lesssim \frac{s}{n} 
		\right) 
		&\geq &
		1 - d \cdot \exp\bigg(-\frac{s^{2}}{\sigma^{2}}\bigg),
	\end{EQA}
	provided that \( R s \lesssim \sigma^{2}\), where 
	\begin{EQA}
		\sigma^{2} 
		&\eqdef &
		\left\| 
		\sum_{i=1}^{n} \E_{\Yu} \bigl( \Gamma_{r}^{-\frac{1}{2}} \Gamma_{ri}^{\Yu} \Gamma_{r}^{-\frac{1}{2}} - \II \bigr)^{2} 
		\right\|.
	\end{EQA}
	It is straightforward to check that \(\Gamma_{r}^{-\frac{1}{2}} \Gamma_{ri}^{\Yu} \Gamma_{r}^{-\frac{1}{2}}\) is a block-diagonal matrix. The number of blocks equals \( \sum_{s \neq r} \Delta_{s} \), all of them are the same and have the following structure 
	\begin{EQA}	
		&&
		\begin{pmatrix}
			\etau_{\uu_{k_{1}},i}^{2} & \etau_{\uu_{k_{1}},i} \etau_{\uu_{k_{2}},i} & \ldots & \etau_{\uu_{k_{1}},i} \etau_{\uu_{k_{r}},i}  \\
			\etau_{\uu_{k_{1}},i} \etau_{\uu_{k_{2}},i} & \etau_{\uu_{k_{2}},i}^{2} & \ldots & \etau_{\uu_{k_{2}},i} \etau_{\uu_{k_{r}},i}\\
			&&\ldots &\\
			\etau_{\uu_{k_{1}},i} \etau_{\uu_{k_{r}},i} & \etau_{\uu_{k_{2}},i} \etau_{\uu_{k_{r}},i} & \ldots  &  \etau_{\uu_{k_{r}},i}^{2}
		\end{pmatrix},
	\end{EQA}
	where \( k_{j} \in \Delta_{r}, j = 1, \ldots, r \). Hence, 
	\begin{EQA}
		\| \Gamma_{r}^{-\frac{1}{2}} \Gamma_{ri}^{\Yu} \Gamma_{r}^{-\frac{1}{2}} - \II \|  
		&\le& 
		\| \Gamma_{r}^{-\frac{1}{2}} \Gamma_{ri}^{\Yu} \Gamma_{r}^{-\frac{1}{2}} \|_{2} + 1 
		= 
		\bigg(\sum_{k_{1}, k_{2} \in \Delta_{r} } \etau_{\uu_{k_{1}},i}^{2} \etau_{\uu_{k_{2}},i}^{2}\bigg)^{\frac{1}{2}} + 1 
		\\
		&=& 
		\sum_{k \in \Delta_{r} } \etau_{\uu_{k},i}^{2} + 1.
	\end{EQA}
	Applying Lemma~\ref{th: Rosenthal for sub-exp from 1 to 2} we obtain that 
	\begin{EQA}
		\Pb_{\Yu} \left(\| \Gamma_{r}^{-\frac{1}{2}} \Gamma_{ri}^{\Yu} \Gamma_{r}^{-\frac{1}{2}} -\II \| \lesssim m_{r} \log n \right) 
		&\geq &
		1 - \frac{1}{n}.
	\end{EQA}
	Moreover, let \( R \eqdefapp m_{r} \log n\). Denote \( \Eset_{2} \eqdef \{ \max_{1 \le i \le n}\| \Gamma_{r}^{-\frac{1}{2}} \Gamma_{ri}^{\Yu} \Gamma_{r}^{-\frac{1}{2}} -\II \| \le R  \}\). Then, \( \Pb (\Eset_{2}) \geq 1 - \frac{1}{n} \).
	
	Let us estimate \(\sigma^{2}\). We fix \(k_{1}, k_{2} \in \Delta_{r}, l_{1} \in \Delta_{s_{1}}, l_{2} \in \Delta_{s_{2}}, s_{1}, s_{2} \neq r \). Direct calculation gives us that
	\begin{EQA}
		&& \nquad
		\E_{\Yu} [\Gamma_{r}^{-1/2}\Gamma_{ri}^{\Yu}\Gamma_{r}^{-1/2}]^{2}((\uu_{k_{1}}, \uu_{l_{1}}), (\uu_{k_{2}}, \uu_{l_{2}})) \\
		&=& 
		\sum_{s \neq r} \sum_{k \in \Delta_{r}} \sum_{l \in \Delta_{s}} \E_{\Yu}  \etau_{\uu_{k_{1}}, i} \etau_{\uu_{k,i}}^{2} \etau_{\uu_{k_{2}}, i} \Gamma_{2}(\uu_{l_{1}}, \uu_{l}) \Gamma_{2}(\uu_{l}, \uu_{l_{2}}) 
		\\
		&& \qquad
		\times \sqrt{\frac{(\mu_{r} - \mu_{s_{1}})^{2} (\mu_{r} - \mu_{s_{2}})^{2}}{\mu_{s_{1}} \mu_{s_{2}}} } \frac{(\mu_{r} - \mu_{s})^{2}}{\mu_{s}}.
	\end{EQA}
	Let \((\uu_{k_{1}}, \uu_{l_{1}}) \neq (\uu_{k_{2}}, \uu_{l_{2}}) )\). 
	Then it is easy to check that 
	\begin{EQA}
		\E_{\Yu} [\Gamma_{r}^{-1/2}\Gamma_{ri}^{\Yu}\Gamma_{r}^{-1/2}]^{2}((\uu_{k_{1}}, \uu_{l_{1}}), (\uu_{k_{2}}, \uu_{l_{2}})) 
		&=& 0.
	\end{EQA} 
	This means that it is a diagonal matrix. 
	Assume now that \((\uu_{k_{1}}, \uu_{l_{1}}) = (\uu_{k_{2}}, \uu_{l_{2}}) )\). 
	Then
	\begin{EQA}
		\E_{\Yu} [\Gamma_{r}^{-1/2}\Gamma_{ri}^{\Yu}\Gamma_{r}^{-1/2}]^{2}((\uu_{k_{1}}, \uu_{l_{1}}), (\uu_{k_{1}}, \uu_{l_{1}})) 
		&=& 
		\sum_{k \in \Delta_{r}}  \E_{\Yu}  \etau_{\uu_{k_{1}},i} \etau_{\uu_{k,i}}^{2} \etau_{\uu_{k_{2}}, i} =
		m_{r} + 2. 
	\end{EQA}
	Hence, 
	\begin{EQA}
		\label{eq: var 1}
		\|\E_{\Yu} \bigl( \Gamma_{r}^{-\frac{1}{2}} \Gamma_{ri}^{\Yu} \Gamma_{r}^{-\frac{1}{2}} - \II \bigr)^{2}\| 
		&\asymp &
		m_{r} \text{ and } \sigma^{2} \asymp n m_{r}.  
	\end{EQA}
	Let us denote 
	\begin{EQA}
		\Delta_{2}^{*} 
		&\eqdefapp &
		\sqrt{m_{r}} \left(\sqrt{\frac{\log n}{n}}  + \sqrt{\frac{\log p}{n}} \right).
	\end{EQA}
	It follows from~(\refeq{eq: berstein inequality 1}) and~(\refeq{eq: var 1}) that
	\begin{EQA}
		\Pb_{\Yu}\Bigl(\Eset_{1}^c(\Delta_{2}^{*})) \le \Pb_{\Yu}(\Eset_{1}^c(\Delta_{2}^{*}) \cap \Eset_{2}
		\Bigr) 
		+ \frac{1}{n} 
		&\leq &
		\frac{2}{n}.
	\end{EQA}
	Similarly to the previous calculations we may also estimate the probability of the following event
	\begin{EQA}
		\Eset_{3}(\delta) \eqdef  \{\Yu: \| \Gamma_{r}^{\Yu}  - \Gamma_{r} \| \leq \delta\}, 
		&\quad &
		\delta > 0.
	\end{EQA}
	It follows from Lemma~\ref{matrix bernstein} that
	\begin{EQA}
		\label{eq: berstein inequality 2}
		\Pb_{\Yu}\left( 
		\left\|\frac{1}{n} \sum_{i=1}^{n} \bigl( \Gamma_{ri}^{\Yu} - \Gamma_{r} \bigr) \right\| \lesssim \frac{s}{n}
		\right) 
		&\geq &
		1 - p \cdot \exp\bigg(-\frac{s^{2}}{\tilde{\sigma}^{2}}\bigg),
	\end{EQA}
	where 
	\begin{EQA}
	\label{eq: var 2}
		\tilde{\sigma}^{2} 
		&\eqdef &
		\left\| \sum_{i=1}^{n} \E_{\Yu} \bigl( \Gamma_{ri}^{\Yu} - \Gamma_{r} \bigr)^{2} \right\| \asymp 
		n \max_{s \neq r} \frac{4\mu_{r}^{2} \mu_{s}^{2} (m_{r} + 2) }{(\mu_{r} - \mu_{s})^{4}} 
		\asymp
		n m_{r} \|\Gamma_{r}\|^{2}.
	\end{EQA}
	Here we applied the same arguments as above. Introduce  the following quantity
	\begin{EQA}
		\Delta_{3}^{*} 
		&\eqdefapp &
		\sqrt{m_{r}} \| \Gamma_{r}\| \left(\sqrt{\frac{\log n}{n}}  + \sqrt{\frac{\log p}{n}} \right).
	\end{EQA}
	It follows from~(\refeq{eq: berstein inequality 2}) and~(\refeq{eq: var 2}) that
	\begin{EQA}
		\Pb(\Eset_{3}(\Delta_{3}^*)) &\geq &
		1 - \frac{1}{n}.
	\end{EQA}
	Let us denote \(\Eset \eqdef \Eset_{1}(\Delta_{2}^*) \cap \Eset_{3}(\Delta_{3}^*)\). Without loss of generality we may assume that
	\begin{EQA}
		\Pb(\Eset) &\geq &
		1 - \frac{1}{n}.
	\end{EQA}
	To finish the proof we apply Corollary~\ref{cor 1 gauss} to obtain
	\begin{EQA}
		\Pb\left( \|S_{r}\|^{2} \geq z\right) 
		&=& 
		\int \Pb\left( \|S_{r}\|^{2} \geq z  \cond \Yu = y \right) \, d \Pb_{\Yu}(y) \\
		&=& 
		\int_{\Eset}  \Pb\left( \|S_{r}\|^{2} \geq z  \cond \Yu = y \right) \, d \Pb_{\Yu}(y) + \int_{\Eset^{c}}\Pb\left( \|S_{r}\|^{2} \geq z  \cond \Yu = y \right) \, d \Pb_{\Yu}(y) \\
		&=& 
		\Pb( \|\xi\|^{2} \geq z) + \Rerr_{n}, 
	\end{EQA}
	where  
	\begin{EQA}
		|\Rerr_{n}| 
		&\leq &
		\Delta_{4}^* \eqdefapp \frac{\sqrt{m_{r}} \Tr \Gamma_{r}}{\sqrt{\lambda_{1}(\Gamma_{r}) \lambda_{2}(\Gamma_{r})}}
		\left( \sqrt{\frac{\log n}{n}} + \sqrt{\frac{\log p}{n}} \right).
	\end{EQA}
	Hence, we proved the following bound
	\begin{EQA}
		\Pb\bigl(n\|\PS_{r} - \PP_{r}\|_{2}^{2} > x \bigr) 
		&\leq &
		\Pb\bigl( \|\xi\|_{2}^{2} \geq x_{-} \bigr) +  \Delta_{4}^{*}.
	\end{EQA}
	Comparing definition of \(\Delta_{4}\) and \(\Delta_{1}\) with \(\err_{1}\)  and \(\err_{2}\) resp. we get the claim of the theorem.
\end{proof}

\subsection{Approximation in the bootstrap world}\label{gap bootstrap world}

The main result of this section is the following theorem.
\begin{theorem}\label{th: GAP bootstrap world}
	Assume that the conditions of Theorem~\ref{th: main} hold.
	Let \(\xib \sim \ND(0, \Gammab_{r})\), where  \(\Gammab_{r}\) is defined below in~(\refeq{eq: cov matrix gamma bootstrap}). For all \(x: x>0\) the following bounds hold with \(\Pb\)-probability at least \(1 - \frac{1}{n}\): 
	\begin{EQA}
		&& 
		\Pbb(n\|\PB_{r} - \PS_{r}\|_{2}^{2} > x) \leq \Pbb(\|\xib\|_{2}^{2} \geq x_{-}) +  \frac{1}{n}, \\ 
		&&
		\Pbb(n\|\PB_{r} - \PS_{r}\|_{2}^{2} > x) \geq \Pbb(\|\xib\|_{2}^{2} \geq x_{+}) -  \frac{1}{n}.
	\end{EQA}
	Here, \(x_{\pm} \eqdef x \pm \err_{3}\) and  
	\begin{EQA}
		\err_{3}
		&\eqdefapp &
		m_{r} \frac{\Tr^3 \CM}{\gu_{r}^{3}} \sqrt{\frac{\log^{3} n}{n} + \frac{\log^{3} p}{n}}.
	\end{EQA}
\end{theorem}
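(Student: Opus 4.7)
The plan is to follow the blueprint of Theorem~\ref{th: GAP real world} in the bootstrap world, with one crucial simplification: because the bootstrap weights \(w_i\) are Gaussian, the linearized statistic will be \emph{exactly} Gaussian conditionally on \(\X\), so no intermediate Gaussian approximation step is required. In particular, unlike the real-world proof, the error \(\err_3\) in the statement of the theorem will arise entirely from the linearization step, and not from any Bernstein/anti-concentration argument on the conditional covariance.

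\textbf{Step 1 (linearization).} First I will invoke Theorem~\ref{th: concentration bootstrap world} to replace \(n\|\PB_r - \PS_r\|_2^2\) by \(n\|L_r(\EEhb)\|_2^2\) at cost at most \(\Delta\), which matches \(\err_3\) up to absolute constants; this holds with \(\Pbb\)-probability \(\geq 1 - 1/n\) on an event of \(\Pb\)-probability \(\geq 1 - 1/n\). Since \(\CC_r \PP_r = 0\), the cross term in the expansion of \(\|\PP_r\EEhb\CC_r + \CC_r\EEhb\PP_r\|_2^2\) vanishes, giving \(\|L_r(\EEhb)\|_2^2 = 2\|\PP_r \EEhb \CC_r\|_2^2\). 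It therefore suffices to compare \(2n\|\PP_r \EEhb \CC_r\|_2^2\) with \(\|\xib\|_2^2\) up to an error already absorbed in \(\err_3\).

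\textbf{Step 2 (Parseval and exact conditional Gaussianity).} Next, expanding in the orthonormal eigenbasis \(\{\uu_j\}_{j=1}^p\) of \(\CM\) exactly as in the proof of Theorem~\ref{th: GAP real world}, I rewrite
\begin{EQA}
2n\|\PP_r \EEhb \CC_r\|_2^2
&=&
\sum_{k \in \Delta_r}\sum_{s \neq r}\sum_{l \in \Delta_s} S^{\sbt}(\uu_k, \uu_l)^2 \eqdef \|S^{\sbt}_r\|^2,
\end{EQA}
where \(S^{\sbt}(\uu, \vv) \eqdef \sqrt{2/n}\,\sum_{i=1}^n (w_i - 1)\langle \uu, \PP_r X_i \rangle \langle \CC_r X_i, \vv \rangle\). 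Conditionally on \(\X\) each \(S^{\sbt}(\uu, \vv)\) is a linear combination of the i.i.d.\ \(\ND(0,1)\) weights \(w_i - 1\), hence the vector \(S^{\sbt}_r\) is centered Gaussian under \(\Pbb\) with covariance matrix equal to the matrix \(\Gammab_r\) introduced in~(\refeq{eq: cov matrix gamma bootstrap}). This gives the \emph{exact} conditional identity in law \(\|S^{\sbt}_r\|^2 \myeq \|\xib\|^2\) under \(\Pbb\), with no approximation error.

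\textbf{Step 3 (assembly).} Finally, combining Steps~1 and~2, on the good event from Step~1 one has
\begin{EQA}
\Pbb\bigl(n\|\PB_r - \PS_r\|_2^2 > x\bigr)
&\leq &
\Pbb\bigl(2n\|\PP_r \EEhb \CC_r\|_2^2 \geq x - \err_3\bigr) + 1/n \\
&=&
\Pbb\bigl(\|\xib\|_2^2 \geq x_-\bigr) + 1/n,
\end{EQA}
and the matching lower bound follows from the symmetric inequality. The substantive technical content is already packaged in Theorem~\ref{th: concentration bootstrap world}; the remainder is purely algebraic. The only real (and modest) obstacle will be to verify that the conditional covariance of \(S^{\sbt}_r\) under \(\Pbb\) indeed matches the matrix \(\Gammab_r\) of~(\refeq{eq: cov matrix gamma bootstrap}), which is a short direct computation using the Gaussianity and independence of the \(w_i\) together with \(\CC_r \PP_r = 0\).
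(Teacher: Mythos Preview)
Your proposal is correct and follows essentially the same route as the paper: invoke Theorem~\ref{th: concentration bootstrap world} for the linearization (which yields exactly the error \(\err_{3}\) after multiplying by \(n\)), expand \(2n\|\PP_{r}\EEhb\CC_{r}\|_{2}^{2}\) via Parseval in the eigenbasis \(\{\uu_{j}\}\), and then observe that because \(w_{i}-1\sim\ND(0,1)\) the resulting vector \(S^{\sbt}_{r}\) is \emph{exactly} Gaussian under \(\Pbb\) with covariance \(\Gammab_{r}\), so \(\Pbb(2n\|\PP_{r}\EEhb\CC_{r}\|_{2}^{2}\geq z)=\Pbb(\|\xib\|^{2}\geq z)\) with no further approximation needed. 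The paper's proof is structured identically; your remark that this simplifies relative to Theorem~\ref{th: GAP real world} (no conditional-covariance Bernstein step) is exactly the point.
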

\begin{proof}
	Let us fix an arbitrary \(x \geq 0\).  We introduce the following notations
	\begin{EQA}
		\EEb &\eqdef &
		\SCb - \CM, \quad \EEhb \eqdef \SCb - \SC.
	\end{EQA}
	and remind  \(\EEh = \SC - \CM\). Applying Lemma~\ref{decomposition real world} we may write 
	\begin{EQA}
		\PB_{r} - \PS_{r} 
		&=& 
		\PB_{r} - \PP_{r} - (\PS_{r} - \PP_{r}) = L_{r}(\EEb) - L_{r}(\EEh) + S_{r}(\EEb) + S_{r}(\EEh).
	\end{EQA}
	It is easy to see that
	\begin{EQA}
		L_{r}(\EEb) - L_{r}(\EEh) 
		&=& 
		\PP_{r} (\SCb - \SC) \CC_{r} + \CC_{r} (\SCb - \SC) \PP_{r} \eqdef L_{r}(\hat \EEb).
	\end{EQA}
	Then
	\begin{EQA}
		n\|\PB_{r} - \PS_{r}\|_{2}^{2} 
		&=& 
		n\| L_{r}(\EEhb)\|_{2}^{2} + n\|\PB_{r} - \PS_{r}\|_{2}^{2} - n\| L_{r}(\EEhb)\|_{2}^{2}.
	\end{EQA}
	It follows from Theorem~\ref{th: concentration bootstrap world} that with \(\Pb\) - probability at least \(1 - \frac{1}{n}\)
	\begin{EQA}
		\Pbb\left(
		\left| n\|\PB_{r} - \PS_{r}\|_{2}^{2} - n\| L_{r}(\EEhb)\|_{2}^{2} \right| \leq \Delta_{1}^* 	
		\right) 
		&\geq &
		1 - \frac{1}{n},
	\end{EQA}
	where
	\begin{EQA}
		\Delta_{1}^* 
		&\eqdefapp & 
		m_{r} \frac{\Tr^3 \CM}{\gu_{r}^{3}} \sqrt{\frac{\log^{3} n}{n} + \frac{\log^{3} p}{n}}.
	\end{EQA}
	Introduce the notation \( x_{\pm} \eqdef x_{\pm} \eqdef x \pm \Delta_{1}^*\). From the previous inequality we may conclude the following two-sided inequalities
	\begin{EQA}
		\Pbb(2n \| \PP_{r} \EEhb \CC_{r}\|_{2}^{2} \geq x_{+}) - \frac{1}{n} 
		&\leq &
		\Pbb(n\|\PB_{r} - \PS_{r}\|_{2}^{2} > x) 
		\leq 
		\Pbb(2n \| \PP_{r} \EEhb \CC_{r}\|_{2}^{2} \geq x_{-})  + \frac{1}{n}.
	\end{EQA}
	It follows that we need to estimate the term \( 2n\| \PP_{r} \EEhb \CC_{r}\|_{2}^{2} \). Without loss of generality, we consider the case of the upper bound only, i.e. we set \(z \eqdef x_{+}\). Similar calculations are valid for \(x_{-}\). 
	Analogously to the approximation in the \(\X \)-world we choose \(\{\uu_{j}\}_{j=1}^{p} \) as an orthonormal basis in \(\R^{p}\). 
	By Parseval's identity,
	\begin{EQA}
		2n \| \PP_{r} \EEhb \CC_{r}\|_{2}^{2} 
		&=& 
		2 n \sum_{l,k = 1}^{p} \langle\PP_{r} \EEhb \CC_{r} \uu_l, \uu_{k}\rangle^{2}.
	\end{EQA}
	Applying the orthogonality of \(\PP_{r}\) and \(\CC_{r}\) we obtain 
	\begin{EQA}
		2n \| \PP_{r} \EEhb \CC_{r}\|_{2}^{2} 
		&=& 
		2 n \sum_{k \in \Delta_{r}} \sum_{s \neq r} \sum_{l \in \Delta_{s}} 
		\langle\PP_{r} \EEhb \CC_{r} \uu_l, \uu_{k}\rangle^{2}.
	\end{EQA}
	Let us fix arbitrary \(\uu_{k}, k \in \Delta_{r}\) and \(\uu_l, l \in \Delta_{s}, s \neq r\). For simplicity we denote them by \(\uu\) and \(\vv\) respectively. We may write
	\begin{EQA}
		S^{\sbt(\uu, \vv)} 
		&\eqdef &
		\sqrt{2n}\langle \PP_{r} \EEhb \CC_{r} \vv, \uu\rangle 
		= 
		\sqrt{\frac{2}{ n}} \sum_{i=1}^{n} \eta_{i} \langle\uu, \Yu_{i}\rangle \langle\vv, Y_{i}\rangle,
	\end{EQA}
	where we denoted \(\eta_{i} \eqdef w_{i} - 1, \Yu_{i} \eqdef \PP_{r} X_{i}\) and 
	\(Y_{i} \eqdef \CC_{r} X_{i}\). 
	Since \(\eta_{i} \sim \ND(0, 1)\), then
	\begin{EQA}
		S^{\sbt(\uu, \vv)} 
		&\myeq &
		\xib(\uu, \vv) \sim \ND(0, \Varb(\xib(\uu,\vv))), \, \Varb(\xib(\uu,\vv)) 
		= 
		\frac{2}{n} \sum_{i=1}^{n} \langle\uu, \Yu_{i})^{2} (\vv, Y_{i}\rangle^{2}.
	\end{EQA}
	Let us fix another pair \(\tilde{\uu}, \tilde{\vv}\) and investigate the covariance
	\begin{EQA}
		\Gammab_{r}((\uu, \vv),(\tilde{\uu}, \tilde{\vv})) 
		&\eqdef &
		\Covb(\xib(\uu, \vv), \xib( \tilde{\uu},  \tilde{\vv})).
	\end{EQA}
	Direct calculations show that
	\begin{EQA}
		\Gammab_{r}((\uu, \vv),(\tilde{\uu}, \tilde{\vv})) 
		&=& 
		\frac{2}{n} \sum_{i=1}^{n} \langle\uu, \Yu_{i}\rangle\langle\tilde{\uu}, \Yu_{i}\rangle \langle\vv, Y_{i}\rangle\langle\tilde{\vv}, Y_{i}\rangle. 
	\end{EQA}
	We form the following covariance matrix
	\begin{EQA}
		\label{eq: cov matrix gamma bootstrap}
		\Gammab_{r} 
		&\eqdef &
		\left[\Gammab((\uu, \vv),  (\tilde{\uu}, \tilde{\vv})) \right]_{((\uu, \vv),(\tilde{\uu}, \tilde{\vv}))}.
	\end{EQA}
	Denote \(\xib \eqdef (\xib(\uu_{k}, \uu_l),  k \in \Delta_{r}, s \neq r, l \in \Delta_{s})\).  
	Then
	\begin{EQA}
		\Pbb\left( 2n\|\PP_{r} \EEhb \CC_{r}\|_{2}^{2} \geq z \right) 
		&=& 
		\Pbb\left( \|\xib\|^{2} \geq z \right).
	\end{EQA}
	Comparing definition of \(\Delta_{1}\) and \(\err_{3}\) we conclude the claim of the theorem.
\end{proof}

\subsection{Gaussian comparison}\label{gauss comparison}
In this section we prove the following Lemma.
\begin{lemma}\label{l: gaussian comparison, our  covariance matrices}
	Let \(\xi \sim \ND(0, \Gamma_{r})\) and \(\xib \sim \ND(0, \Gammab_{r})\), where \(\Gamma_{r}\) and \(\Gammab_{r}\) are defined in~(\refeq{eq:cov-matrix-gamma}),~(\refeq{eq: cov matrix gamma bootstrap}) respectively. Let \( \mm \) be defined by the relations 
	\begin{EQA}
		\lambda_{\mm}(\Gamma_{r}) 
		&\geq &
		\Tr \Gamma_{r} \left(\sqrt{\frac{\log n}{n}} + \sqrt{\frac{\log p}{n}} \right) 
		\geq 
		\lambda_{\mm+1}(\Gamma_{r}).
	\end{EQA}
	Then the following holds with \(\Pb\)-probability al least \(1 - \frac{1}{n}\):
	\begin{EQA}
		\sup_{x \geq 0} |\Pb(\|\xi\|_{2}^{2} \geq x) - \Pbb(\|\xib\|_{2}^{2} \geq x)| 
		&\leq &
		\err_{4},
	\end{EQA}
	where 
	\begin{EQA}
		\err_{4} 
		&\eqdefapp &
		\frac{ \mm\, \Tr \Gamma_{r}}{\sqrt{\lambda_{1}(\Gamma_{r}) \lambda_{2}(\Gamma_{r})}} \left(\sqrt{\frac{\log n}{n}} + \sqrt{\frac{\log p}{n}} \right) +  \frac{\Tr (\II - \Pi_\mm)\Gamma_{r}}{\sqrt{\lambda_{1}(\Gamma_{r}) \lambda_{2}(\Gamma_{r})}}.
	\end{EQA}
	Here \( \Pi_{\mm} \) is a projector on the subspace spanned by the eigenvectors of \(\Gamma_{r}\) corresponding to its largest \( \mm \) eigenvalues.
\end{lemma}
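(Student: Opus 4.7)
The strategy is a truncation-plus-comparison scheme: project onto the top-\(\mm\) eigenspace of \(\Gamma_r\), apply Gaussian comparison on the low-rank part, and absorb the tail by Gaussian anti-concentration. The two summands in \(\err_4\) correspond exactly to these two sources of error.

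The central ingredient is a spectral-norm bound
\begin{EQA}
\|\Gammab_r - \Gamma_r\|
& \lesssim &
\|\Gamma_r\| \left( \sqrt{\frac{\log n}{n}} + \sqrt{\frac{\log p}{n}} \right),
\end{EQA}
valid with \(\Pb\)-probability at least \(1 - 1/n\). To get it, write \(\Gammab_r = \tfrac{2}{n}\sum_{i=1}^n V_i V_i^{\T}\), where \(V_i\) is the vectorization of the outer product \((\PP_r X_i)(\CC_r X_i)^{\T}\). Since \(\PP_r\) and \(\CC_r\) project onto orthogonal subspaces, the two factors \(\PP_r X_i\) and \(\CC_r X_i\) are independent Gaussians, so the Wick/Isserlis formula gives \(\E[V_i V_i^{\T}] = \tfrac{1}{2}\Gamma_r\) and hence \(\E\Gammab_r = \Gamma_r\). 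A matrix Bernstein argument parallel to the proof of Theorem~\ref{th: concentration for sample cov bootstrap world} then yields the bound: the polynomial tail of \(\|V_i\|^2 = \|\PP_r X_i\|^2 \, \|\CC_r X_i\|^2\) is controlled at polylogarithmic level by Lemma~\ref{th: Rosenthal for sub-exp from 1 to 2}, and the variance proxy \(\|\sum_i \E V_i V_i^{\T} V_i V_i^{\T}\|\) is estimated by \(n \, \|\Gamma_r\| \, \Tr \Gamma_r\).

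With the spectral bound in hand, split \(\|\xi\|_2^2 = \|\Pi_\mm \xi\|_2^2 + \|(\II - \Pi_\mm)\xi\|_2^2\) and similarly for \(\xib\). The tails are chi-squared-type variables with means \(\Tr(\II-\Pi_\mm)\Gamma_r\) and \(\Tr(\II-\Pi_\mm)\Gammab_r\); by the spectral bound and Weyl's inequality, the small eigenvalues of the two covariances agree up to a negligible multiplicative factor. For \(t\) of order \(\Tr(\II-\Pi_\mm)\Gamma_r\),
\begin{EQA}
\Pb(\|\xi\|_2^2 \geq x)
& \leq &
\Pb(\|\Pi_\mm\xi\|_2^2 \geq x - t) + \Pb(\|(\II-\Pi_\mm)\xi\|_2^2 \geq t),
\end{EQA}
with the second term negligible by chi-squared concentration, and a matching lower bound. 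Gaussian comparison (Lemma~\ref{l: explicit gaussian comparison}) applied to the effectively rank-\(\mm\) Gaussians \(\Pi_\mm \xi\) and \(\Pi_\mm \xib\) yields
\begin{EQA}
\sup_y \bigl| \Pb(\|\Pi_\mm \xi\|_2^2 \geq y) - \Pbb(\|\Pi_\mm \xib\|_2^2 \geq y) \bigr|
& \lesssim &
\frac{\|\Pi_\mm (\Gamma_r - \Gammab_r)\Pi_\mm\|_1}{\sqrt{\lambda_1(\Gamma_r)\lambda_2(\Gamma_r)}}
\leq
\frac{\mm \, \|\Gamma_r - \Gammab_r\|}{\sqrt{\lambda_1(\Gamma_r)\lambda_2(\Gamma_r)}},
\end{EQA}
where I used \(\|A\|_1 \leq \Rank(A) \, \|A\|\) on the rank-\(\mm\) matrix \(\Pi_\mm(\Gamma_r - \Gammab_r)\Pi_\mm\); combined with the spectral bound above, this contributes the first summand of \(\err_4\). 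The shift by \(t\) is absorbed by the Gaussian anti-concentration inequality (Lemma~\ref{band of GE}), which gives a band probability of order \(t/\sqrt{\lambda_1(\Gamma_r)\lambda_2(\Gamma_r)}\) and produces the second summand of \(\err_4\).

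The main obstacle will be the matrix Bernstein step, because the summands \(V_iV_i^{\T}\) are degree-four polynomials in Gaussians and must be truncated carefully so that the variance proxy scales like \(\|\Gamma_r\| \Tr \Gamma_r\) rather than like \((\Tr \Gamma_r)^2\) or \(\Tr \Gamma_r^2\), which would lose the sharp factor of \(\|\Gamma_r\|\) in the final bound. A second subtlety is that the projector \(\Pi_\mm\) is defined from \(\Gamma_r\), so one must invoke Weyl's inequality to argue that it also captures the top eigenspace of \(\Gammab_r\) well enough for the bootstrap tail \((\II - \Pi_\mm)\xib\) to be genuinely negligible under \(\Pbb\).
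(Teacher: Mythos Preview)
Your plan is close in spirit to the paper's, and the matrix-Bernstein step for \(\|\Gammab_r-\Gamma_r\|\) is exactly what the paper does (though note: with variance proxy \(n\,\|\Gamma_r\|\Tr\Gamma_r\) the resulting bound is \(\sqrt{\|\Gamma_r\|\Tr\Gamma_r}\,(\sqrt{\log n/n}+\sqrt{\log p/n})\), not \(\|\Gamma_r\|(\cdots)\); this slip is harmless for the final \(\err_4\)). The main structural difference is that the paper does \emph{not} split the quadratic form \(\|\xi\|^2\) and then invoke anti-concentration; instead it applies the Gaussian comparison Lemma~\ref{l: explicit gaussian comparison} directly to the full vectors \(\xi,\xib\) and uses Corollary~\ref{cor 2 gauss}, which bounds the trace-class norm by \(\mm\|\Gamma_r-\Gammab_r\| + \Tr(\II-\Pi_\mm)\Gamma_r + \Tr(\II-\Pi_\mm)\Gammab_r\). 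This is cleaner because it avoids the sandwiching step altogether; your route reintroduces Lemma~\ref{band of GE}, which the paper uses elsewhere but not here.

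There is, however, a genuine gap in your proposal: Weyl's inequality does \emph{not} control \(\Tr(\II-\Pi_\mm)\Gammab_r\). Weyl bounds \(\lambda_j(\Gammab_r)\) against \(\lambda_j(\Gamma_r)\), but \(\Tr(\II-\Pi_\mm)\Gammab_r\) is a trace on a \emph{fixed} subspace (the one determined by \(\Gamma_r\)), not the sum of the small eigenvalues of \(\Gammab_r\). Writing \(\Tr(\II-\Pi_\mm)\Gammab_r = \Tr(\II-\Pi_\mm)\Gamma_r + \Tr(\II-\Pi_\mm)(\Gammab_r-\Gamma_r)\), the second term is only bounded by \((d-\mm)\|\Gammab_r-\Gamma_r\|\) via the spectral norm, and \(d\asymp p\) destroys the dimension-free bound. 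The paper handles this with a separate \emph{scalar} concentration argument: it writes \(\Tr(\II-\Pi_\mm)\Gammab_r = \frac{2}{n}\sum_i \|(\II-\Pi_\mm)Z_{ri}\|^2\) as an i.i.d.\ average of degree-four Gaussian polynomials and applies Rosenthal's inequality with \(m\asymp\log n\) to show it concentrates around \(\Tr(\II-\Pi_\mm)\Gamma_r\) with probability \(1-1/n\). You have correctly identified the issue but proposed the wrong tool; replace Weyl by this direct moment argument and the rest of your outline goes through.
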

\begin{proof}
	Without loss of generality we may assume that \(\err_{4} \lesssim 1\). The proof is based on the application of Corollary~\ref{cor 2 gauss}. First we estimate \( \|\Gammab_{r} - \Gamma_{r}\| \). Introduce the following notations
	\begin{EQA}
		\Gammab_{ri} 
		&\eqdef &
		[\Gammab_{ri}((\uu_{k_{1}}, \uu_{l_{1}}), (\uu_{k_{2}}, \uu_{l_{2}})), k_{1}, k_{2} \in
		\Delta_{r}, l_{1} \in \Delta_{s_{1}}, l_{2} \in \Delta_{s_{2}}, s_{1}, s_{2} \neq r],
	\end{EQA}
	where 
	\begin{EQA}
		\Gammab_{ri}((\uu_{k_{1}}, \uu_{l_{1}}), (\uu_{k_{2}}, \uu_{l_{2}})) 
		&\eqdef & 
		2\sqrt{\frac{\mu_{r}^{2}}{(\mu_{r} - \mu_{s_{1}})^{2}(\mu_{r} - \mu_{s_{2}})^{2}}} \etau_{\uu_{k_{1}},i} \etau_{\uu_{k_{2}},i} \eta_{\uu_{l_{1}}, i} \eta_{\uu_{l_{2}}, i}.
	\end{EQA}
	In these notations we may rewrite \(\Gammab_{r}\) as follows
	\begin{EQA}
		\Gammab_{r}((\uu_{k_{1}}, \uu_{l_{1}}), (\uu_{k_{2}}, \uu_{l_{2}})) 
		&=& 
		\frac{1}{n} \sum_{i=1}^{n} \Gammab_{ri}((\uu_{k_{1}}, \uu_{l_{1}}), (\uu_{k_{2}}, \uu_{l_{2}})).
	\end{EQA}
	Due to Lemma~\ref{matrix bernstein} we need to show that there exists \(R = R(n, \Gamma_{r})\) such that
	\begin{EQA}\label{eq: 11}
		\max_{1 \le k \le n} \|\Gammab_{rk} - \Gamma_{r}\| 
		&\lesssim &
		R,
	\end{EQA}
	and estimate
	\begin{EQA}\label{eq: 12}
		\tilde{\sigma}^{2} 
		&=& 
		\Big\|\sum_{k=1}^{n} \E ( \Gammab_{rk} - \Gamma_{r})^{2} \Big\| 
		= 
		n \Big\| \E (\Gammab_{r1} - \Gamma_{r})^{2} \Big\|.
	\end{EQA}
	It is obvious that \(\|\Gammab_{ri} - \Gamma_{r}\| \le \|\Gammab_{ri}\| + \| \Gamma_{r}\| \).  Let \( Z_{rj} \eqdef ( \etau_{\uu_{k}, j} \eta_{\uu_l, j}, s \neq r, l \in \Delta_{s}, k \in \Delta_{r} )^{\t},  j = 1, \ldots, n\). Since \( \Gammab_{r1} = Z_{r1} Z_{r1}^{\t} \) we obtain  
	\begin{EQA}
		\|\Gammab_{r1}\|  
		&=& 
		\| Z_{r1}\|^{2} = 2 \sum_{s \neq r} \sum_{k \in \Delta_{r}}   
		\sum_{l \in \Delta_{s_{1}}} \frac{\mu_{s} \mu_{r} }{(\mu_{s} - \mu_{r})^{2}} \etau_{\uu_{k}}^{2} \eta_{\uu_{l}}^{2}.
	\end{EQA}
	Applying Lemma~\ref{th: Rosenthal for sub-exp from 1 to 2} we get 
	\begin{EQA}
		\Pb \left(\|\Gammab_{r1}\|  \lesssim  \log^{2} n \, \Tr \Gamma_{r} \right) 
		&\geq &
		1 - \frac{1}{n}.
	\end{EQA}
	Moreover, it is obvious that \( \|\Gamma_{r}\| \le \Tr \Gamma_{r}  \). To bound \(\max_{1 \leq i \leq n} \|\Gammab_{ri} - \Gamma_{r}\|\) we introduce the following event
	\begin{EQA}
		\Eset_{1}  
		&\eqdef &
		\left\{ 
		\max_{1 \leq i \leq n} \|\Gammab_{ri} - \Gamma_{r}\|
		\lesssim \log^{2} n \, \Tr  \Gamma_{r}. 
		\right\}.
	\end{EQA}
	Using the union bound we may show that \(\Pb(\Eset_{1}^{c} ) \leq n^{-1} \). It remains to estimate \( \tilde{\sigma}^{2} \). Since
	\(\tilde{\sigma}^{2} =  n \| \E (\Gammab_{r1})^{2} - \Gamma_{r}^{2} \|\) we first calculate \( \E (\Gammab_{r1})^{2} \). It follows that
	\begin{EQA}
		\E (\Gammab_{r1})^{2} 
		&=& 
		\E Z_{r1} Z_{r1}^{\t} Z_{r1} Z_{r1}^{\t} = \E \|Z_{r1}\|^{2} Z_{r1} Z_{r1}^{\t}.
	\end{EQA}
	 Let us fix some \( s_{1}, s_{2} \neq r, k_{1}, k_{2} \in \Delta_{r}, l_{1} \in \Delta_{s_{1}}, l_{2} \in \Delta_{s_{2}} \). Then the entry of \(\E (\Gammab_{r1})^{2}\) in the position \( ((\uu_{k_{1}}, \uu_{l_{1}}), (\uu_{k_{2}}, \uu_{l_{2}})) \) has the following form
	\begin{EQA}
		&&
		\E \frac{4\mu_{r} \sqrt{\mu_{s_{1}} \mu_{s_{2}}}  }{|\mu_{s_{1}} - \mu_{r}| |\mu_{s_{2}} - \mu_{r}|}    \etau_{\uu_{k_{1}}} \etau_{\uu_{k_{2}}} \eta_{\uu_{l_{1}}} \eta_{\uu_{l_{2}}} \sum_{s \neq r} \sum_{k \in \Delta_{r}} \sum_{l \in \Delta_{s}} \frac{\mu_{s} \mu_{r} }{(\mu_{s} - \mu_{r})^{2}} \etau_{\uu_k}^{2} \eta_{\uu_l}^{2},
	\end{EQA}
	where \( \etau_{\uu_k}, \eta_{\uu_l}, k \in \Delta_{r}, l \in \Delta_{s}, s \neq r\), are i.i.d. \(\ND(0,1)\) r.v. It is easy to check that all off-diagonal entries are equal zero and it remains to estimate diagonal entries only. We obtain 
	\begin{EQA}
		\E \frac{4\mu_{r} \mu_{s_{1}}}{(\mu_{s_{1}} - \mu_{r})^{2}} \,
		\etau_{\uu_{k_{1}}}^{2} \eta_{\uu_{l_{1}}}^{2} 
		\sum_{s \neq r} \sum_{k \in \Delta_{r}} \sum_{l \in \Delta_{s}} 
			\frac{\mu_{s} \mu_{r} }{(\mu_{s} - \mu_{r})^{2}} \, 
		\etau_{\uu_k}^{2} \eta_{\uu_l}^{2} 
		&=& 
		\E S_{1} \E S_{2},
	\end{EQA}
	where 
	\begin{EQA}
		S_{1} 
		&\eqdef & 
		\mu_{r}^{2} \sum_{k \in \Delta_{r}}  \etau_{\uu_k}^{2}  \etau_{\uu_{k_{1}}}^{2}, \\
		S_{2} 
		&\eqdef & 
		4\sum_{s \neq r} \sum_{l \in \Delta_{s}} \frac{ \mu_{s} \mu_{s_{1}} }{(\mu_{s} - \mu_{r})^{2}(\mu_{s_{1}} - \mu_{r})^{2}}  \eta_{\uu_{l}}^{2} \eta_{\uu_{l_{1}}}^{2}.
	\end{EQA}
	We get that \( \E S_{1} \asymp \mu_{r}^{2} m_{r} \) and 
	\begin{EQA}
		\E S_{2} 
		&\asymp &
		\sum_{s \neq r} \sum_{l \in \Delta_{s}} \frac{ \mu_{s} \mu_{s_{1}} }{(\mu_{s} - \mu_{r})^{2}(\mu_{s_{1}} - \mu_{r})^{2}}.
	\end{EQA}
	Hence,
	\begin{EQA}
		\tilde \sigma^{2} 
		&\asymp &
		n \|\Gamma_{r}\| \Tr \Gamma_{r} = n \|\Gamma_{r}\|^{2} \rr(\Gamma_{r}) .
	\end{EQA}
	Let us introduce the following quantity 
	\begin{EQA}
		\Delta_{1}^{*} 
		&\eqdefapp &
		\|\Gamma_{r}\| \rr^{\frac{1}{2}}(\Gamma_{r}) \left(\sqrt{\frac{\log n}{n}} + \sqrt{\frac{\log p}{n}} \right) 
	\end{EQA}
	and the event \(\Eset_{2} \eqdef \{\|\Gammab_{r}  - \Gamma_{r} \| \leq \Delta_{1}^{*} \}\). Applying Lemma~\ref{matrix bernstein}  we get
	\begin{EQA}
		\Pb(\Eset_{2}^{c}) \leq \Pb( \Eset_{2}^{c} \cap \Eset_{1}) + \frac{1}{n} 
		&\leq &
		\frac{2}{n}.
	\end{EQA}
	To apply Corollary~\ref{cor 2 gauss} we also need to show that the remaining part of the trace of \( \Gammab_{r}\) concentrates around its non-random counterpart. We take \( \mm \) and \( \Pi_\mm\) as stated in the lemma. Denote \( \overline \Pi_\mm \eqdef \II - \Pi_\mm \) and
	\begin{EQA}
		\Eset_3 \eqdef \bigg\{ \bigg| \Tr  \overline\Pi_\mm \Gammab_{r} - \Tr  \overline\Pi_\mm \Gamma_{r} \bigg| 
		&\lesssim &
		\Tr  \overline\Pi_\mm \Gamma_{r} \frac{\log^3 n }{\sqrt n}\bigg\}.
	\end{EQA}
	It is easy to check that
	\begin{EQA}
		\Tr  \overline \Pi_\mm \Gammab_{r} 
		&=& 
		\frac{1}{n}\sum_{j=1}^n \Tr (\Pi Z_{rj}) (\Pi Z_{rj})^{\t} 
		= \frac{2}{n}\sum_{j=1}^n 
		\sum_{s \in \Tset_{r}} \sum_{k \in \Delta_{r}} \sum_{l \in \Delta_{s}} \frac{\mu_{s} \mu_{r}}{(\mu_{s} - \mu_{r})^{2} }\etau_{\uu_k, j}^{2}  \eta_{\uu_l, j}^{2},
	\end{EQA} 
	where \( \etau_{\uu_k, j}, \eta_{\uu_l, j}, k \in \Delta_{r}, l \in \Delta_{s}, s \neq r\), are i.i.d. \(\ND(0,1)\) r.v. 
	Simple calculations show that
	\begin{EQA}
		\E \Tr  \overline \Pi_\mm \Gammab_{r} 
		&=& 
		\Tr  \overline \Pi_\mm \Gamma_{r}.
	\end{EQA}
	We introduce additional notations. 
	Denote \( \gamma_{sr} \eqdef \frac{\mu_{s} \mu_{r}}{(\mu_{s} - \mu_{r})^{2} } \) and
	\begin{EQA}
		Q_{j} 
		&\eqdef &
		\sum_{s \in \Tset_{r}} \sum_{k \in \Delta_{r}} \sum_{l \in \Delta_{s}} \gamma_{sr} [\etau_{\uu_k, j}^{2}  \eta_{\uu_l, j}^{2} - 1], \quad j = 1, \ldots, n.
	\end{EQA}
	It is obvious that \( Q_{j}\) are i.i.d. r.v. We estimate 
	\begin{EQA}
		\E\bigg| \Tr  \overline\Pi_\mm \Gammab_{r} - \Tr  \overline\Pi_\mm \Gamma_{r} \bigg|^{m} 
		&=& 
		\frac{2^{m}}{n^{m}}\E\bigg|\sum_{j=1}^n Q_{j} \bigg|^{m}.
	\end{EQA}
	Applying Rosenthal's inequality (see e.g.~\cite{Rosenthal1970}) we obtain
	\begin{EQA}
		\E\bigg|\sum_{j=1}^n Q_{j} \bigg|^{m} 
		&\le &
		C^{m} \left( m^{\frac m2} n^{\frac m2} \E^{\frac m2} Q_{1}^{2} + m^{m} n \E | Q_{1} |^{m} \right).
	\end{EQA}
	It remains to estimate \( \E | Q_{1} |^{m}  \).  We may rewrite \(Q_{1}\) as follows
	\begin{EQA}
		Q_{1} &=& 
		A Q_{11} + Q_{11} Q_{12} + m_{r} Q_{12},
	\end{EQA}
	where
	\begin{EQA}
		A 
		&\eqdef &
		\sum_{s \in \Tset_{r}} \sum_{l \in \Delta_{s}} \gamma_{sr}, \\
		Q_{11} 
		&\eqdef &
		\sum_{k \in \Delta_{r}} [\etau_{\uu_k}^{2} - 1], \\
		Q_{12} 
		&\eqdef &
		\sum_{s \in \Tset_{r}} \sum_{l \in \Delta_{s}} \gamma_{sr} [\eta_{\uu_l}^{2} - 1].
	\end{EQA}
	Applying Lemma~\ref{th: Rosenthal for sub-exp from 1 to 2} we estimate each term separately and show that
	\begin{EQA}
		\E |Q_{1}|^{m} 
		&\lesssim &
		C^{m} m^{2m} \Tr^{m}  \overline \Pi_\mm \Gamma_{r}. 
	\end{EQA}
	Hence,
	\begin{EQA}
		\E\bigg| \Tr  \overline\Pi_\mm \Gammab_{r} - \Tr  \overline\Pi_\mm \Gamma_{r} \bigg|^{m} 
		&\le &
		\frac{C^{m} m^{3m}}{ n^{\frac m2}} \Tr^{m}  \overline \Pi_\mm \Gamma_{r}.
	\end{EQA}
	Choosing \( m \asymp \log n\) and applying Markov's inequality we get
	\begin{EQA}
		\Pb \left ( \Eset_3 \right) 
		&\geq &
		1 - \frac{1}{n}.
	\end{EQA}
	Denote now \(\Eset = \Eset_{2} \cap \Eset_{3}\). It follows that \(\Pb(\Eset) \geq 1 - \frac{1}{n}\).  Applying Corollary~\ref{cor 2 gauss} we get that for all \( w \in \Eset \)
	\begin{EQA}
		\sup_{x \geq 0} |\Pb(\|\xi\|_{2}^{2} \geq x) - \Pbb(\|\xib\|_{2}^{2} \geq x)| 
		&\lesssim &
		\Delta_{2}^{*},
	\end{EQA}
	where 
	\begin{EQA}
		\Delta_{2}^{*} 
		&\eqdef &
		\frac{\|\Gamma_{r}\| \, \mm\, \rr^{\frac{1}{2}}(\Gamma_{r})}{\sqrt{\lambda_{1}(\Gamma_{r}) \lambda_{2}(\Gamma_{r})}} \left(\sqrt{\frac{\log n}{n}} + \sqrt{\frac{\log p}{n}} \right) +  \frac{\Tr (\II - \Pi_\mm)\Gamma_{r}}{\sqrt{\lambda_{1}(\Gamma_{r}) \lambda_{2}(\Gamma_{r})}}.
	\end{EQA}
	Comparing \(\Delta_{2}^*\) with \(\err_{4}\) we finish the proof of this lemma.
\end{proof}

\subsection{Proof of the main result} 
This section collects the results of the previous sections and provides a proof of   Theorem~\ref{th: main}.

\begin{proof}[Proof of Theorem~\ref{th: main}]
	Let us fix an event \(\Eset \subset \Omega\) which holds with \(\Pb\) - probability at least \(1 - \frac{1}{n}\). 
	Suppose that for all \(\omega \in \Eset\) the statements of Theorems~\ref{th: GAP real world}, \ref{th: GAP bootstrap world} and Lemma~\ref{l: gaussian comparison, our  covariance matrices} hold. First we show that for all \(x > 0\)
	\begin{EQA}
		\label{eq: sup inequality}
		\bigl|\Pbb(n\|\PB_{r} - \PS_{r}\|_{2}^{2} > x) - \Pb(n\|\PS_{r} - \PP_{r}\|_{2}^{2} > x) \bigr| 
		&\lesssim &
		\err,
	\end{EQA}
	where \(\err\) is defined in~(\refeq{def: err}).
	Applying Theorem~\ref{th: GAP bootstrap world} we may show that   
	\begin{EQA}
		\Pbb(n\|\PB_{r} - \PS_{r}\|_{2}^{2} > x) 
		&\geq &
		\Pbb(\|\xib\|_{2}^{2} \geq x-\err_{3}) -  \frac{1}{n}, 
	\end{EQA}
	where we recall that
	\begin{EQA}
		\label{first term}
		\err_{3}
		&\asymp &
		m_{r} \frac{\Tr^3 \CM}{\gu_{r}^{3}} \sqrt{\frac{\log^{3} n}{n} + \frac{\log^{3} p}{n}}.
	\end{EQA}
	Lemma~\ref{l: gaussian comparison, our  covariance matrices} implies	
	\begin{EQA}
		\label{mixed term}
		\Pbb(n\|\PB_{r} - \PS_{r}\|_{2}^{2} > x) 
		&\geq &
		\Pb(\|\xi\|_{2}^{2} \geq x - \err_{3}) - \err_{4} - \frac{1}{n},
	\end{EQA}
	where 
	\begin{EQA}
		\label{second term}
		\qquad \err_{4} 
		&\asymp &
		\frac{\|\Gamma_{r}\| \, \mm\, \rr^{\frac{1}{2}}(\Gamma_{r})}{\sqrt{\lambda_{1}(\Gamma_{r}) \lambda_{2}(\Gamma_{r})}} \left(\sqrt{\frac{\log n}{n}} + \sqrt{\frac{\log p}{n}} \right) +  \frac{\Tr (\II - \Pi_\mm)\Gamma_{r}}{\sqrt{\lambda_{1}(\Gamma_{r}) \lambda_{2}(\Gamma_{r})}}.
	\end{EQA}
	As it is clear from~(\refeq{mixed term}) we need to get bounds for \(\err_{3}\)-band of squared norm of the Gaussian element \(\xi\). 
	For this purpose one can use Lemma~\ref{band of GE}. 
	Then we get from~(\refeq{mixed term})
	\begin{EQA}
		\Pbb(n\|\PB_{r} - \PS_{r}\|_{2}^{2} > x) 
		&\geq &
		\Pb(\|\xi\|_{2}^{2} \geq x) - \overline \err_{3} - \err_{4},
	\end{EQA}	
	where 
	\begin{EQA}
		\label{first term 2}
		\overline \err_{3}
		&\asymp &
		\frac{m_{r} \Tr^3 \CM}{\gu_{r}^{3} \sqrt{\lambda_{1}(\Gamma_{r}) \lambda_{2}(\Gamma_{r})}} \sqrt{\frac{\log^{3} n}{n} + \frac{\log^{3} p}{n}}.
	\end{EQA}
	Finally, applying Theorem~\ref{th: GAP real world} and Lemma~\ref{band of GE} we get 
	\begin{EQA}
		\Pbb(n\|\PB_{r} - \PS_{r}\|_{2}^{2} > x) 
		&\geq &
		\Pb(n\|\PS_{r} - \PP_{r}\|_{2}^{2} > x) 
		- \err_{1}   - \err_{2} - \overline \err_{3} - \err_{4},
	\end{EQA}
	where
	\begin{EQA}
		\err_{1} 
		&\asymp &
		\frac{ m_{r}^{1/2} \Tr \Gamma_{r}}{\sqrt{\lambda_{1}(\Gamma_{r}) \lambda_{2}(\Gamma_{r})}} \left( \sqrt{\frac{\log n}{n}} + \sqrt{\frac{\log p}{n}} \right), 
		\\
		\err_{2} 
		&\asymp &
		\frac{m_{r}  \Tr^3\CM}{\gu_{r}^{3}\sqrt{\lambda_{1}(\Gamma_{r}) \lambda_{2}(\Gamma_{r})}} \sqrt{\frac{\log^3 n}{n}}.
	\end{EQA}
	Similarly we may write down all inequalities in the opposite direction. It is easy to see that
	\begin{EQA}
		\err_{1} + \err_{2} + \overline \err_{3} + \err_{4} 
		&\leq &
		\err,
	\end{EQA}
	where
	\begin{EQA}
		\err
		&\asymp &
		\frac{ \mm\, \Tr \Gamma_{r} }{\sqrt{\lambda_{1}(\Gamma_{r}) \lambda_{2}(\Gamma_{r})}} \left(\sqrt{\frac{\log n}{n}} + \sqrt{\frac{\log p}{n}} \right) +  \frac{\Tr (\II - \Pi_\mm)\Gamma_{r}}{\sqrt{\lambda_{1}(\Gamma_{r}) \lambda_{2}(\Gamma_{r})}} \\
		&&
		+\, \frac{m_{r} \Tr^3\CM}{\gu_{r}^{3} \sqrt{\lambda_{1}(\Gamma_{r}) \lambda_{2}(\Gamma_{r})} } \left(\sqrt{\frac{\log ^{3} n}{n}} 
		+ \sqrt{\frac{\log^{3} p}{n}} \right).
	\end{EQA}
	Hence, we finish the proof of~(\refeq{eq: sup inequality}). Now we show that for all \(w \in \Eset\)  
	\begin{EQA}
		\label{eq: quantile comparison}
		\gamma_{\alpha + \varepsilon_{1}} 
		&\le &
		\gammab_{\alpha} \le \gamma_{\alpha - \varepsilon_{2}}
	\end{EQA}
	with \(\varepsilon_{1} \eqdef 2 \err, \ve_{2} \eqdef \err\). 
	It follows from Theorem~\ref{th: GAP real world}, Lemma~\ref{band of GE} and definition of \(\err\) that
	\begin{EQA}
		\label{eq: quantiles}
		\alpha - \err 
		&\le &
		\Pb(n\|\PS_{r} - \PP_{r}\|_{2}^{2} > \gamma_{\alpha}) \le \alpha.
	\end{EQA}
	The proof of~(\refeq{eq: quantile comparison}) follows from this inequality and~(\refeq{eq: sup inequality}):
	\begin{EQA}
		\Pbb(n\|\PS_{r} - \PP_{r}\|_{2}^{2} > \gamma_{\alpha-\ve_{2}}) 
		&\le &
		\Pb(n\|\PS_{r} - \PP_{r}\|_{2}^{2} > \gamma_{\alpha-\ve_{2}}) + \err  \le \alpha,  \\
		\Pbb(n\|\PS_{r} - \PP_{r}\|_{2}^{2} > \gamma_{\alpha+\ve_{1}}) 
		&\geq &
		\Pb(n\|\PS_{r} - \PP_{r}\|_{2}^{2} > \gamma_{\alpha+\ve_{1}}) - \err \geq  \alpha.
	\end{EQA}
	Hence, applying~(\refeq{eq: quantile comparison}) and~(\refeq{eq: quantiles}) we write
	\begin{EQA}
		\Pb(n\|\PS_{r} - \PP_{r}\|_{2}^{2} > \gammab_{\alpha}) - \alpha 
		&\leq &
		\Pb(n\|\PS_{r} - \PP_{r}\|_{2}^{2} > \gamma_{\alpha+\ve_{1}})- \alpha   \le 2\err, \\
		\Pb(n\|\PS_{r} - \PP_{r}\|_{2}^{2} > \gammab_{\alpha}) - \alpha 
		&\geq &
		\Pb(n\|\PS_{r} - \PP_{r}\|_{2}^{2} > \gamma_{\alpha-\ve_{2}})- \alpha   \geq -2\err.
	\end{EQA}
	The last two inequalities conclude the claim of the theorem.
\end{proof}

\section{Gaussian comparison and anti-concentration inequalities}\label{SgaussianPP}
The aim of this section is to derive dimensional free bound in Gaussian comparison and anti-concentration inequalities. We start from the discussion of the Gaussian comparison inequality.

Due to Pinsker's inequality for any probability measures \(\Pb_{1}\) and \(\Pb_{2}\) on \((\Omega, \mathfrak F)\) we may write
\begin{EQA}
	\label{eq: Pinsker}
	\sup_{A \in \mathfrak{F}} |\Pb_{1}(A) - \Pb_{2}(A)| 
	&\leq &
	\sqrt{\KL(\Pb_{1}, \Pb_{2})/2},
\end{EQA}
where \(\KL(\Pb_{1}, \Pb_{2})\) is the Kullback-Leibler divergence between \(\Pb_{1}, \Pb_{2}\), see~\cite{Tsybakov2009}[pp. 88,132]. 
Let \(\xi\) and \(\eta\) be Gaussian elements in \(\R^{p}\) with zero mean and covariance matrices 
\(\Sigma_{\xi}, \Sigma_{\eta}\) resp.   
Denote \(\W \eqdef \CM_{\xi}^{-\frac{1}{2}} \CM_{\eta} \CM_{\xi}^{-\frac{1}{2}}\) and assume  
\(\|\W - \II\| \leq 1/2\). 
Taking \(\Pb_{1} \eqdef \ND(0, \CM_{\xi})\) and \(\Pb_{2} \eqdef \ND(0, \CM_{\eta})\) one may check (see e.g.~\cite{spokoiny2015} ) that
\begin{EQA}
	\KL(\Pb_{1}, \Pb_{2}) 
	&\leq &
	\frac 12 \Tr (\W - \II)^{2}.
\end{EQA}
The last inequality and~(\refeq{eq: Pinsker}) imply that
\begin{EQA}
	\sup_{z \in \R} |\Pb(\|\xi\| \geq z) - \Pb(\|\eta\| \geq z)| 
	&\leq &
	\frac{1}{2} \|\W - \II\|_{2}.
\end{EQA}
To apply this inequality one have to estimate the Hilbert-Schmidt norm in the r.h.s. of the previous inequality. Below we will show that applying Bernstein's matrix inequality we may control the operator norm \(\|\W - \II\|\). Hence, the r.h.s. of the previous inequality may be bounded up to some constant by \(\sqrt{p}\|\W - \II\|\).  The following lemma shows that in a rather general situation it is possible to derive a  dimensional free bound. We denote by \(\lambda_{1\eta}\geq\lambda_{2\eta} \geq \ldots \) and  
\(\lambda_{1\xi}\geq\lambda_{2\xi} \geq \ldots \)   the eigenvalues of \(\CM_{\eta}\) and \(\CM_{\xi}\) resp. Recall that \( \| \A \|_{1} \) is the Schatten \(1\)-norm (or the trace-class norm), i.e.
\begin{EQA}
	\| \A \|_{1} 
	&\eqdef &
	\Tr |\A |= \sum_{k=1}^\infty |\lambda_k(\A)|.
\end{EQA} 
\begin{lemma}
	\label{l: explicit gaussian comparison}
	Let \(\xi\) and \(\eta\) be Gaussian elements in \(\HM\) with zero mean and covariance operators \(\CM_{\xi}\) and \(\CM_{\eta}\) respectively. The following inequality holds
	\begin{EQA}
		\sup_{x \geq 0} \left|\Pb( \|\xi \|^{2} \geq x) - \Pb( \|\eta \|^{2} \geq x) \right| 
		&\lesssim &
		\left( \frac{1}{\sqrt{\lambda_{1\eta}\lambda_{2\eta}}}
			+ \frac{1}{\sqrt{\lambda_{1\xi}\lambda_{2\xi}}}
		\right) 
		\err_0
	\end{EQA}
	where
	\begin{EQA}
		\err_0 
		&\eqdef &
		\| \CM_\xi - \CM_\eta \|_{1}.
	\end{EQA}	
\end{lemma}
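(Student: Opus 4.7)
My approach combines three ingredients: (i) an interpolation along the segment of covariance operators $\CM_\tau \eqdef (1-\tau)\CM_\eta + \tau\CM_\xi$, $\tau \in [0,1]$; (ii) a Fourier-analytic (Esseen-type) smoothing inequality for the Kolmogorov distance between the laws of $\|\xi\|^{2}$ and $\|\eta\|^{2}$; and (iii) the anti-concentration estimate of Lemma~\ref{band of GE}, which delivers a density upper bound of order $1/\sqrt{\lambda_{1\tau}\lambda_{2\tau}}$ for $\|\xi_\tau\|^{2}$ depending only on the top two eigenvalues of $\CM_\tau$. The feature that will make the final bound linear in $\|\CM_\xi - \CM_\eta\|_{1}$, rather than merely linear in the Hilbert-Schmidt-type quantity that Pinsker produces, is that the characteristic function $\phi_\tau(t) \eqdef \det(\II - 2it\CM_\tau)^{-1/2}$ enters linearly through its $\tau$-derivative, and this derivative pairs the difference $\CM_\xi - \CM_\eta$ against a resolvent with operator norm bounded by $1$.

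Concretely, differentiating the log-determinant gives
\[
\frac{d\phi_\tau(t)}{d\tau} \;=\; it\,\phi_\tau(t)\,\Tr\bigl[(\II - 2it\CM_\tau)^{-1}(\CM_\xi - \CM_\eta)\bigr].
\]
Because the eigenvalues $(1-2it\lambda)^{-1}$ of $(\II - 2it\CM_\tau)^{-1}$ all have modulus at most $1$, the operator norm $\|(\II - 2it\CM_\tau)^{-1}\|$ does not exceed $1$, and the H\"older duality $|\Tr(BC)| \le \|B\|\,\|C\|_{1}$ between trace-class and operator norms controls the bracket by $\|\CM_\xi - \CM_\eta\|_{1}$. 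Integrating in $\tau$ and feeding the resulting estimate
\[
\left|\frac{\phi_\xi(t) - \phi_\eta(t)}{t}\right| \;\le\; \|\CM_\xi - \CM_\eta\|_{1}\int_{0}^{1}|\phi_\tau(t)|\,d\tau
\]
into the Esseen inequality
\[
\sup_{x \ge 0}\bigl|\Pb(\|\xi\|^{2}\le x) - \Pb(\|\eta\|^{2}\le x)\bigr| \;\lesssim\; \int_{-T}^{T}\frac{|\phi_\xi(t)-\phi_\eta(t)|}{|t|}\,dt \;+\; \frac{M}{T},
\]
with density bound $M\lesssim 1/\sqrt{\lambda_{1\eta}\lambda_{2\eta}}$ supplied by Lemma~\ref{band of GE}, reduces the problem to estimating the truncated integral $\int_{-T}^{T}|\phi_\tau(t)|\,dt$.

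For this last step I would keep only the two largest eigenvalues of $\CM_\tau$ in the product formula for $|\phi_\tau|$ and apply AM--GM in the form $(1+4t^{2}\lambda_{1\tau})(1+4t^{2}\lambda_{2\tau}) \ge (1+4t^{2}\sqrt{\lambda_{1\tau}\lambda_{2\tau}})^{2}$ to obtain $|\phi_\tau(t)| \le (1+4t^{2}\sqrt{\lambda_{1\tau}\lambda_{2\tau}})^{-1/2}$; the corresponding truncated integral is then of order $1/\sqrt{\lambda_{1\tau}\lambda_{2\tau}}$ up to logarithmic factors in $T$, and the optimal choice of $T$ balances this against the Esseen error $M/T$. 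Running the symmetric argument anchored at the $\xi$-endpoint produces the other summand $\|\CM_\xi - \CM_\eta\|_{1}/\sqrt{\lambda_{1\xi}\lambda_{2\xi}}$ of the claimed bound. The main technical obstacle will be precisely this last step: with only two eigenvalues of $\CM_\tau$ controlled from below, the unconditional integral $\int_{\R}|\phi_\tau(t)|\,dt$ is merely logarithmically convergent, so the truncation $T$ has to be chosen with care, and any residual logarithmic overhead has to be absorbed into the constant implicit in $\lesssim$.
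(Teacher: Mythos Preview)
Your interpolation idea and the derivative computation for $\phi_\tau$ are exactly what the paper uses, but the smoothing step you propose has a genuine gap. Once you pass to absolute values via $|\Tr[(\II-2it\CM_\tau)^{-1}(\CM_\xi-\CM_\eta)]|\le\|\CM_\xi-\CM_\eta\|_1$ and then to $\int_{-T}^{T}|\phi_\tau(t)|\,dt$, the oscillation in $e^{-itx}$ is thrown away. With only two eigenvalues retained, the bound $|\phi_\tau(t)|\lesssim (1+4t^2\lambda_{1\tau}\lambda_{2\tau})^{-1/2}$ decays only like $1/|t|$, so the truncated integral is $\asymp \log(T\sqrt{\lambda_{1\tau}\lambda_{2\tau}})/\sqrt{\lambda_{1\tau}\lambda_{2\tau}}$. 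Balancing this against the Esseen remainder $M/T$ forces the optimal $T$ to depend on $\err_0$ and the eigenvalues, and the resulting bound picks up a factor $\log\bigl(\sqrt{\lambda_1\lambda_2}/\err_0\bigr)$. That factor is \emph{not} an absolute constant, so it cannot be absorbed into the implicit constant in $\lesssim$; you would obtain a strictly weaker statement than Lemma~\ref{l: explicit gaussian comparison}.

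The paper avoids this by never discarding the phase. It uses the exact inversion formula and keeps the operator-valued integral
\(
\widehat\G(T,s)=\int_{|t|\le T} f(t,s)\,(\II-2it\V(s))^{-1}\,e^{-itx}\,dt
\)
intact, then bounds $\|\widehat\G(T,s)\|$ by bounding each eigenvalue $\int_{-T}^{T} f(t,s)\,\mu_j(t,s)\,e^{-itx}\,dt$. The point is that $f(t,s)\mu_j(t,s)$ is itself a characteristic function, so after peeling off the first two coordinates one is left with an oscillatory integral of the form $\int_{-T}^{T} e^{itb}\,\E e^{it(\lambda_1 Z_1^2+\lambda_2 Z_2^2)}\,dt$; the auxiliary Lemma~\ref{l: aux lemma int est} bounds this by $c/\sqrt{\lambda_1\lambda_2}$ \emph{uniformly in $b$ and $T$}. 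It is this oscillatory-integral estimate, not mere decay of $|\phi_\tau|$, that removes the logarithm and makes the final trace-norm pairing $|\Tr\{(\CM_\xi-\CM_\eta)\widehat\G\}|\le\|\CM_\xi-\CM_\eta\|_1\,\|\widehat\G\|$ deliver the stated bound.
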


\begin{corollary}
	\label{cor 1 gauss}
	Under assumptions of Lemma~\ref{l: explicit gaussian comparison} the following bound for \( \err_0 \)  holds
	\begin{EQA}
		\label{expl_gauss}
		\err_0 
		&\le & 
		\| \CM_{\xi}^{-\frac{1}{2}} \CM_{\eta} \CM_{\xi}^{-\frac{1}{2}} - \II\| \Tr \CM_{\xi}.
	\end{EQA}
\end{corollary}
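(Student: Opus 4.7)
The plan is to turn the assumed bound on \( \| \CM_{\xi}^{-\frac{1}{2}} \CM_{\eta} \CM_{\xi}^{-\frac{1}{2}} - \II\| \) into a trace-class bound on \( \CM_{\xi} - \CM_{\eta} \) by a similarity-type rewriting followed by Hölder-type estimates for Schatten norms. Set \( W \eqdef \CM_{\xi}^{-\frac{1}{2}} \CM_{\eta} \CM_{\xi}^{-\frac{1}{2}} - \II \), which is a self-adjoint operator. The factorisation
\begin{EQA}
\CM_{\eta} - \CM_{\xi} &=& \CM_{\xi}^{\frac{1}{2}} W \CM_{\xi}^{\frac{1}{2}}
\end{EQA}
reduces the problem to showing that
\begin{EQA}
\bigl\| \CM_{\xi}^{\frac{1}{2}} W \CM_{\xi}^{\frac{1}{2}} \bigr\|_{1}
&\lesssim& \|W\| \cdot \Tr \CM_{\xi}.
\end{EQA}

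The key step is a spectral-decomposition argument. Since \( W \) is self-adjoint, write \( W = \sum_{j} \mu_{j} e_{j} e_{j}^{\T} \) in an orthonormal basis \( \{e_{j}\} \) of eigenvectors, so that
\begin{EQA}
\CM_{\xi}^{\frac{1}{2}} W \CM_{\xi}^{\frac{1}{2}}
&=& \sum_{j} \mu_{j} (\CM_{\xi}^{\frac{1}{2}} e_{j})(\CM_{\xi}^{\frac{1}{2}} e_{j})^{\T}.
\end{EQA}
The triangle inequality for the Schatten \(1\)-norm combined with the fact that each summand is a rank-one operator of trace norm \( \| \CM_{\xi}^{\frac{1}{2}} e_{j} \|^{2} \) yields
\begin{EQA}
\bigl\| \CM_{\xi}^{\frac{1}{2}} W \CM_{\xi}^{\frac{1}{2}} \bigr\|_{1}
&\leq& \sum_{j} |\mu_{j}| \, \| \CM_{\xi}^{\frac{1}{2}} e_{j} \|^{2}
\;\leq\; \|W\| \, \sum_{j} \| \CM_{\xi}^{\frac{1}{2}} e_{j} \|^{2}.
\end{EQA}
A Parseval-type identity then gives \( \sum_{j} \| \CM_{\xi}^{\frac{1}{2}} e_{j} \|^{2} = \Tr \CM_{\xi} \), and combining the displays yields the desired bound (\refeq{expl_gauss}).

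The main subtlety is ensuring that the spectral expansion and the triangle inequality for \( \| \cdot \|_{1} \) go through in the infinite-dimensional Hilbert-space setting of the lemma; this is justified as long as \( \CM_{\xi} \) is trace-class (so \( \Tr \CM_{\xi} < \infty \)), which is the hypothesis under which the statement is meaningful. No other nontrivial step arises: the inequality is essentially a one-line consequence of the operator Hölder principle \( \| A B A \|_{1} \leq \|B\| \cdot \|A\|_{2}^{2} \) applied with \( A = \CM_{\xi}^{1/2} \) (self-adjoint) and \( B = W \).
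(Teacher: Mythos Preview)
Your proof is correct and follows essentially the same route as the paper: factor \( \CM_{\eta} - \CM_{\xi} = \CM_{\xi}^{1/2} W \CM_{\xi}^{1/2} \) and then bound the trace norm by a Schatten--H\"older inequality. The paper compresses this to a single citation of \( \|\A\B\|_{1} \le \|\A\|_{1}\|\B\| \), while you spell out the spectral-decomposition argument and also note the equivalent one-line form \( \|ABA\|_{1} \le \|B\|\,\|A\|_{2}^{2} \); the content is the same.
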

\begin{proof}
	The proof follows directly from the following well known inequality
	\begin{EQA}
		\label{Schatten norm inequality}
		\| \A \B\|_{1} 
		&\le &
		\| \A \|_{1} \|\B\|.
	\end{EQA}
\end{proof}

\begin{corollary}
	\label{cor 2 gauss}
	Let \(m: 1 \le m < \infty \) and  \( \Pi_{m} \eqdef \sum_{k=1}^{m} \ee_{j} \ee_{j}^{\t}\). Under assumptions of Lemma~\ref{l: explicit gaussian comparison} the following bound for \( \err_0 \)  holds
	\begin{EQA}
		\label{eq: err0 term}
		\err_0   
		&\le &
		m \| \CM_{\xi} - \CM_{\eta} \| + \Tr (\II-\Pi_{m}) \CM_\xi + \Tr (\II -  \Pi_{m}) \CM_\eta.
	\end{EQA}
\end{corollary}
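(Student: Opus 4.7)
The plan is to decompose the difference $\CM_\xi - \CM_\eta$ into four blocks induced by the projector $\Pi_m$ and its orthogonal complement $\bar\Pi_m \eqdef \II - \Pi_m$, then bound each block using either a rank argument (for the blocks touched by $\Pi_m$) or positivity (for the tail block). Starting from the identity
\begin{EQA}
\CM_\xi - \CM_\eta
&=&
\Pi_m(\CM_\xi-\CM_\eta)\Pi_m
+ \Pi_m(\CM_\xi-\CM_\eta)\bar\Pi_m
+ \bar\Pi_m(\CM_\xi-\CM_\eta)\Pi_m
+ \bar\Pi_m(\CM_\xi-\CM_\eta)\bar\Pi_m,
\end{EQA}
the triangle inequality for $\|\cdot\|_1$ together with $\|A\|_1 = \|A^*\|_1$ reduces the problem to estimating three pieces: the principal block, the off-diagonal cross block, and the tail block.

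For the principal block $\Pi_m(\CM_\xi-\CM_\eta)\Pi_m$ and the cross block $\Pi_m(\CM_\xi-\CM_\eta)\bar\Pi_m$, the key observation is that each has rank at most $m$, since $\Pi_m$ is a projector onto an $m$-dimensional subspace. Combined with the elementary inequality $\|X\|_1 \le \Rank(X)\cdot\|X\|$ and the fact that compression by orthogonal projectors cannot increase the operator norm, each of these pieces is bounded by $m\,\|\CM_\xi-\CM_\eta\|$.

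For the tail block, I would exploit that both $\CM_\xi$ and $\CM_\eta$ are positive semi-definite. After one more application of the triangle inequality, the task becomes to bound $\|\bar\Pi_m\CM_\xi\bar\Pi_m\|_1$ and $\|\bar\Pi_m\CM_\eta\bar\Pi_m\|_1$. Each of these is positive semi-definite, so its Schatten $1$-norm equals its trace, yielding exactly $\Tr((\II-\Pi_m)\CM_\xi)$ and $\Tr((\II-\Pi_m)\CM_\eta)$. Collecting the three bounds gives the claim (up to an absolute constant absorbed in the $\lesssim$ of subsequent applications).

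The only step that requires a moment's thought is the rank-$m$ bound on the off-diagonal cross block: the operator $\Pi_m(\CM_\xi-\CM_\eta)\bar\Pi_m$ is not self-adjoint and is not obviously trace-class-controlled by $\|\CM_\xi-\CM_\eta\|$, but the rank argument together with $\|X\|_1\le \Rank(X)\,\|X\|$ handles it cleanly. Once this is in place, the rest of the argument is routine manipulation of trace-norm inequalities and the identity $\|P\|_1 = \Tr P$ for $P\succeq 0$.
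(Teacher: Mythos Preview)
Your argument is correct and is essentially what the paper has in mind: the paper's own proof consists of the single sentence ``The proof is obvious,'' and the subsequent remark (that one may without loss of generality take $\CM_\xi$ and $\CM_\eta$ diagonal) indicates they are thinking of exactly the block splitting you carry out.

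One small point worth flagging: your four-block decomposition produces $3m\|\CM_\xi-\CM_\eta\|$ rather than the $m\|\CM_\xi-\CM_\eta\|$ literally stated in the corollary, and you note this yourself. In the fully diagonal situation the paper's remark alludes to, the two cross blocks $\Pi_m(\CM_\xi-\CM_\eta)\bar\Pi_m$ and its adjoint vanish and one recovers the sharp constant $m$; in the general (non-commuting) setting your factor $3$ is what a clean argument gives. Since every downstream use of this corollary in the paper is up to absolute constants (the $\asymp$ in Lemma~\ref{l: gaussian comparison, our  covariance matrices} and the $\lesssim$ in Theorem~\ref{th: main}), the discrepancy is harmless, exactly as you say.
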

\begin{proof}
	The proof is obvious.
\end{proof}

\begin{remark} It is easy to see that we may assume without loss of generality that \( \CM_\xi\) and \( \CM_{\eta}\) are diagonal matrices. Then the last two terms in~(\refeq{eq: err0 term}) are the sums of eigenvalues \(\lambda_{j \xi}, \lambda_{j \eta}, j \geq m+1\).
\end{remark}

It next lemma we show that in a rather general situation one may obtain dimensional free anti-concentration inequality for the squared norm of Gaussian element with dependence on first two largest eigenvalues of \(\CM\) only. 

\begin{lemma}[\(\Delta\)-band of the squared norm of a Gaussian element]\label{band of GE}
	Let \(\xi\) be a Gaussian element in \(\HM\) with zero mean and covariance operator  \(\CM\). Then for arbitrary  \(\Delta > 0\) and any \(\lambda > \lambda_{1}\)      
	\begin{EQA}
		\label{band of Gaussian1}
		\Pb(x < \|\xi \|^{2} < x + \Delta)  
		&\leq & 
		C_1 \, \Delta,
	\end{EQA}
	where
	\begin{EQA}
		C_1 
		&\eqdef &
		\frac{e^{-x/(2\,\lambda)}}{\sqrt{\lambda_{1}\lambda_{2}}}
		\prod_{j=3}^{\infty}\,(1-\lambda_{j}/\lambda)^{-1/2}
	\end{EQA}
	and \(\lambda_{1}\geq\lambda_{2}\geq\dots\) are the  eigenvalues of \(\CM\). In particular, one has
	\begin{EQA}
		\label{band of Gaussian2}
		\sup_{x  > 0} \Pb(x < \|\xi \|^{2} < x + \Delta)  
		&\leq & 
		\frac{\Delta}{\sqrt{\lambda_{1}\lambda_{2}}}.
	\end{EQA}	
\end{lemma}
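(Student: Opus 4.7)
\textbf{Proof plan for Lemma~\ref{band of GE}.} The plan is to obtain a pointwise upper bound on the density of \( \|\xi\|^{2} \) by isolating the two largest spectral components of \( \xi \) and exploiting the fact that the sum of two weighted independent chi-squares has an explicitly integrable density. First, diagonalise \( \CM \) so that \( \|\xi\|^{2} = \sum_{j=1}^{\infty} \lambda_{j} Z_{j}^{2} \) with \( Z_{j} \) i.i.d.\ \( \ND(0,1) \). Split this as \( \|\xi\|^{2} = Y + S \) with \( Y \eqdef \lambda_{1} Z_{1}^{2} + \lambda_{2} Z_{2}^{2} \) and \( S \eqdef \sum_{j \geq 3} \lambda_{j} Z_{j}^{2} \); by independence, the density of \( \|\xi\|^{2} \) is the convolution \( f(y) = \int_{0}^{y} f_{Y}(y - s)\, f_{S}(s)\, ds \).

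The main computation is the density of \( Y \). Convolving the two chi-square densities and substituting \( t = uy \) gives
\begin{EQA}
f_{Y}(y)
&=&
\frac{1}{2\pi \sqrt{\lambda_{1} \lambda_{2}}} \int_{0}^{1} \frac{\exp\bigl(-\tfrac{yu}{2\lambda_{1}} - \tfrac{y(1-u)}{2\lambda_{2}}\bigr)}{\sqrt{u(1-u)}}\, du.
\end{EQA}
For any \( \lambda > \lambda_{1} \geq \lambda_{2} \), the convex combination in the exponent is bounded below by \( y/(2\lambda) \) uniformly in \( u \), so the exponential factor is at most \( e^{-y/(2\lambda)} \). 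Using \( \int_{0}^{1} [u(1-u)]^{-1/2}\, du = \pi \), I get the pointwise bound
\begin{EQA}
f_{Y}(y)
&\leq &
\frac{e^{-y/(2\lambda)}}{2\sqrt{\lambda_{1} \lambda_{2}}}.
\end{EQA}

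Convolving with \( f_{S} \) and using Fubini yields
\begin{EQA}
f(y)
&\leq &
\frac{e^{-y/(2\lambda)}}{2\sqrt{\lambda_{1} \lambda_{2}}} \int_{0}^{\infty} e^{s/(2\lambda)} f_{S}(s)\, ds
=
\frac{e^{-y/(2\lambda)}}{2\sqrt{\lambda_{1} \lambda_{2}}} \, \prod_{j \geq 3} \E \exp\bigl(\lambda_{j} Z_{j}^{2}/(2\lambda)\bigr),
\end{EQA}
and the Gaussian moment generating identity \( \E e^{aZ^{2}} = (1 - 2a)^{-1/2} \) for \( a < 1/2 \), applied with \( a = \lambda_{j}/(2\lambda) < 1/2 \) (here we need \( \lambda > \lambda_{1} \geq \lambda_{j} \)), produces the product \( \prod_{j \geq 3}(1 - \lambda_{j}/\lambda)^{-1/2} \). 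Finally, since \( e^{-y/(2\lambda)} \) is monotone decreasing in \( y \), integration over \( [x, x+\Delta] \) gives
\begin{EQA}
\Pb(x < \|\xi\|^{2} < x+\Delta)
&\leq &
\Delta \cdot f(x)
\leq
\Delta \cdot C_{1},
\end{EQA}
which is (\refeq{band of Gaussian1}). For (\refeq{band of Gaussian2}), I let \( \lambda \to \infty \): both \( e^{-x/(2\lambda)} \to 1 \) and each factor \( (1-\lambda_{j}/\lambda)^{-1/2} \to 1 \), and dominated convergence handles the infinite product (since \( \sum \lambda_{j} < \infty \) makes it convergent), yielding the uniform bound \( \Delta / (2\sqrt{\lambda_{1}\lambda_{2}}) \leq \Delta/\sqrt{\lambda_{1}\lambda_{2}} \).

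The step that requires most care is the density identity for \( Y \) and the reduction to the Beta-kernel integral; everything downstream is exponential tilting and a monotonicity argument. A minor subtlety is the passage to the infinite-dimensional limit when \( \HM \) is a Hilbert space, which is handled by truncating \( S \) at the \( N \)-th coordinate, applying the above bound with a finite product, and then letting \( N \to \infty \) using the summability \( \sum \lambda_{j} = \Tr \CM < \infty \) to control the tail of the product.
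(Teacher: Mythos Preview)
Your argument is correct and follows the same core idea as the paper: bound the density of the two leading chi-square components via the Beta integral \(\int_0^1 [u(1-u)]^{-1/2}\,du = \pi\), then absorb the remaining components by exponential tilting at level \(1/(2\lambda)\). The paper carries out the tilting by term-by-term convolution and induction on \(m\), while you do it in one stroke via the Laplace transform \(\E e^{S/(2\lambda)} = \prod_{j\ge 3}(1-\lambda_j/\lambda)^{-1/2}\); these are equivalent. For the uniform bound (\refeq{band of Gaussian2}) the paper fixes \(\lambda = 2\Tr\CM\), whereas you let \(\lambda\to\infty\); both work.

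Two small points to tighten. First, the line \(\Pb(x<\|\xi\|^2<x+\Delta)\le \Delta\cdot f(x)\) is not correct as written, since the density \(f\) itself need not be monotone; what you mean (and what suffices) is \(\int_x^{x+\Delta} f(y)\,dy \le \int_x^{x+\Delta} \tfrac{e^{-y/(2\lambda)}}{2\sqrt{\lambda_1\lambda_2}}\prod_{j\ge 3}(1-\lambda_j/\lambda)^{-1/2}\,dy \le \Delta\, C_1\), using monotonicity of the \emph{bound}, not of \(f\). Second, writing \(f_S(s)\,ds\) presumes \(S\) has a density; to cover the case where only finitely many \(\lambda_j\) are nonzero, replace the convolution integral by \(f(y)=\E\bigl[f_Y(y-S)\one\{S\le y\}\bigr]\), after which the same exponential bound on \(f_Y\) and \(\E e^{S/(2\lambda)}\) go through verbatim.
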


\begin{remark}  The right-hand sides of~(\refeq{band of Gaussian1}) and~(\refeq{band of Gaussian2})
	depend on first two eigenvalues of \(\CM\). In general it is impossible to get similar bounds of order \(O( \Delta)\) with dependence on \(\lambda_{1}\) only. It is easy to get in one dimensional case, i.e.  when \(\lambda_{1} = 1\) and  \(\lambda_{2} = 0\), that for all positive \(\Delta \leq \log 2\) one has
	\begin{EQA}
		\sup_{x  > 0} \Pb(x < \|\xi \|^{2} < x + \Delta)  
		&\geq &
		\Delta^{1/2}/(2\sqrt{\pi}).
	\end{EQA}		
\end{remark}

\begin{proof}[Proof of Lemma~\ref{l: explicit gaussian comparison}] 
	Fix any \(s: 0\leq s \leq 1\). Let  \(Z(s)\) be a Gaussian random element in \(\HM\) with zero mean and covariance operator \(\V(s)\):
	\begin{EQA}
		\V(s) 
		&\eqdef & 
		s \CM_{\xi} + (1-s) \CM_{\eta}.
	\end{EQA}
	Denote by \(\lambda_{1}(s)\geq\lambda_{2}(s)\geq\dots\) 
	the eigenvalues of \(\V(s)\). 
	It is straightforward to check that a characteristic function \(f(t,s)\) of \(\|Z(s) \|_{2}^{2}\) can be written as
	\begin{EQA}
		\label{eq:  c.f.}
		f(t,s) 
		&=& 
		\E \exp\{it\|Z(s) \|^{2}\} = \prod_{j=1}^{\infty}(1-2it\lambda_{j}(s))^{-1/2}
		\\
		&=&
		\exp \left\{ - \frac{1}{2} \sum_{j=1}^{\infty} \log (1-2it\lambda_{j}(s)) \right\}. 
	\end{EQA}
	Indeed, one may use the following representation
	\begin{EQA}
		\label{eq: gaussian representation}
		Z(s) 
		&=& 
		\sum_{j=1}^{\infty} \sqrt{\lambda_{j}(s)} \, Z_{j} \, \ee_{j}, 	
	\end{EQA}	
	where \(Z_{j}, j \geq 1,\) are i.i.d. \(\ND(0,1)\) r.v. and \(\ee_{j}, j \geq 1,\) be an orthonormal basis in \(\HM\). Then it is sufficient to apply an  expression for a characteristic function of \( Z_{j}^{2} \).  We rewrite \(f(t,s)\) in terms of trace-class operators
	\begin{EQA}
		\label{char funct}
		f(t,s) 
		&=& 
		\exp\bigl\{ - \frac{1}{2} \Tr \log\bigl( \II - 2 it \V(s) \bigr) \bigr\},
	\end{EQA}
	where for an operator \(\A\) and the identity operator \(\II\) we use notation  
	\begin{EQA}
		\log(\II + \A) 
		&=& 
		\A \int_{0}^{1}(\II + y \A)^{-1}dy.
	\end{EQA}
	It is well known, see e.g.~\cite{Chung2001}[\S 6.2, p. 168], that
	for a continues d.f. \(F(x)\) with c.f. \(f(t)\) we may write
	\begin{EQA}
		F(x) 
		&=& 
		\frac{1}{2} + \frac{i}{2\pi} \lim_{T \rightarrow \infty} \text{V.P.} \int_{|t| \le T} e^{-i t x} f(t) \frac{dt}{t}.
	\end{EQA} 
	Let us fix an arbitrary \(x > 0\). Then 
	\begin{EQA} 
		\Pb( \|\xi \|^{2} < x) - \Pb( \|\eta \|^{2} < x) 
		&=&  
		\frac{i}{2\pi} \lim_{T \rightarrow \infty} \text{V.P.} 
			\int_{|t| \le T} \frac{f(t,1) - f(t,0)}{t} e^{-i t x} \,dt.
	\end{EQA}
	Since
	\begin{EQA}
		f(t,1) - f(t,0)
		&=&
		\int_{0}^{1}\frac{\partial f(t,s)}{\partial s}\,ds,
	\end{EQA}
	changing the order of integration we get
	\begin{EQA}
		\label{smooth inequal}
		\Pb( \|\xi \|^{2} < x) &-& \Pb( \|\eta \|^{2} < x)
		 \\
		&= &
		\frac{i}{2\pi} \lim_{T \rightarrow \infty} \text{V.P.} \int_0^1\int_{|t| \le T} \frac{\partial f(t,s)/\partial s}{t} e^{-i t x} \, dt \, ds
	\end{EQA}
	It is easy to check that
	\begin{EQA}
		\frac{\partial f(t,s)/\partial s}{t}\ 
		&=& 
		f(t,s) \, \Tr\bigl\{ (\CM_{\xi} - \CM_{\eta}) 
		\bigl( \II - 2 it \V(s) \bigr)^{-1} \bigr\}  \\  
		&=& 
		f(t,s) \, \Tr\bigl\{ (\CM_{\xi} - \CM_{\eta}) \G(t,s) \bigr\},  
	\end{EQA}
	where \(\G(t,s) \eqdef (\II - 2it\, \V(s))^{-1} \).  
	Hence,
	\begin{EQA}
		\Pb( \|\xi \|^{2} < x) - \Pb( \|\eta \|^{2} < x)
		&=& 
		\frac{i}{2\pi} \lim_{T \rightarrow \infty}  \int_0^1 \Tr\left\{ (\CM_{\xi} - \CM_{\eta}) \Gh(T, s) \right\} \, ds,
	\end{EQA}
	where 
	\begin{EQA}
		\Gh(T, s) 
		&\eqdef &
		\int_{|t| \le T} f(t,s)  \G(t,s) e^{-i t x} \, dt.
	\end{EQA}
	We show that for any \(T>0\) and \(s\in [0, 1] \) one has 
	\begin{EQA}
		\label{bound_for_G}
		\|\Gh(T,s)\| 
		&\le &
		\frac{c}{\sqrt{\lambda_{1}(s) \lambda_{2}(s)}}.
	\end{EQA}
	For this aim we denote the eigenvalues of \(\G(t,s)\) by \(\mu_{j}(t,s) \eqdef (1 - 2 i t \lambda_{j}(s))^{-1}\). Let \( \overline Z_{j} \) be a random variable with exponential distribution \(Exp(0, 1/(2\lambda_{j}(s)))\), which is independent of \(Z_{k}, k \geq 1\). Then
	\begin{EQA}
		\label{c.f. Z}
		\E e^{i t \overline Z_{j}} 
		&=& 
		\mu_{j}(t,s).
	\end{EQA}
	Applying~(\refeq{c.f. Z}) we obtain 
	\begin{EQA}
		\label{eq: product c.f}
		f(t,s) \mu_{j}(t,s) 
		&=& 
		\E \exp \bigg( it \big[\sum_{k \geq 1}\lambda_{k}(s) Z_{k}^{2} + \overline Z_{j}\big]\bigg) \\
		&=& 
		\E e^{it a_{j}^{2}} \cdot  \E \bigg(\exp \big( it \big [\lambda_{1}(s) Z_{1}^{2} + \lambda_{2}(s) Z_{2}^{2}\big]\big)  \bigg ),
	\end{EQA}
	where \(a_{j}^{2} \eqdef  \overline Z_{j} + \sum_{k \geq 3}\lambda_{k}(s) Z_{k}^{2}\). 
	We fix \(j\) and get a bound for 
	\begin{EQA}
		I &\eqdef &
		\left| \int_{-T}^{T} f(t,s) \mu_{j}(t,s) e^{-i t x} \, dt \right|.
	\end{EQA}
	Applying~(\refeq{eq: product c.f}) we obtain
	\begin{EQA}
		I \le  \E\left|  \int_{-T}^{T}  e^{i t (a_{j}^{2} - x)} \E \exp \bigg( it \big [\lambda_{1}(s) Z_{1}^{2} + \lambda_{2}(s) Z_{2}^{2}\big]\bigg)   \, dt \right|.
	\end{EQA}
	It follows from~\cite{GotzUlyanov2000}[Lemma 2.2] (see also~\cite{ProkhUlyanov2013}[p. 242]) that there exists an absolute constant \(c\) such that
	\begin{EQA}
		\qquad \left|  
		\int_{-T}^{T}  e^{i t (a_{j}^{2} - x)} 
		\E \exp \bigg( it \big [\lambda_{1}(s) Z_{1}^{2} + \lambda_{2}(s) Z_{2}^{2}\big]\bigg) dt 
		\right| 
		&\le &
		\frac{c}{\sqrt{\lambda_{1}(s) \lambda_{2}(s)}}. \label{eq: conj}
	\end{EQA}
	For readers convenience we repeat the proof of this inequality below in Lemma~\ref{l: aux lemma int est}. Applying~(\refeq{eq: conj}) we get that the absolute values of all eigenvalues of \(\Gh(T, s)\) are bounded by \(c (\lambda_{1}(s) \lambda_{2}(s))^{-1/2}\) and, therefore, we obtain~(\refeq{bound_for_G})
	\begin{EQA}
		\left|\Tr\left\{ (\CM_{\xi} - \CM_{\eta}) \Gh(T, s) \right\} \right| 
		&\le &
		\| \CM_\xi - \CM_\eta\|_{1} \|\Gh(T,s) \| 
		\le 
		\frac{c \| \CM_\xi - \CM_\eta\|_{1} }{\sqrt{\lambda_{1}(s) \lambda_{2}(s)}}.
	\end{EQA}
	This implies the claim of the lemma.  
\end{proof}	

\begin{proof}[Proof of Lemma~\ref{band of GE}] 
	The inequality~(\refeq{band of Gaussian2}) follows immediately from~(\refeq{band of Gaussian1}) if we take \(\lambda = 2 \Tr \CM\) and note that
	\begin{EQA}
		\prod_{j=3}^{\infty}\,(1-\lambda_{j}/\lambda)^{-1/2} 
		&\leq &
		\left(1 - \lambda^{-1}\sum_{j=3}^{\infty}\lambda_{j} \right)^{-1/2} 
		\leq \sqrt{2}.
	\end{EQA}
	In order to prove~(\refeq{band of Gaussian1}) it is sufficient to show that for a density function \(g(u)\) of \(\|\xi \|_{2}^{2}\) one has
	\begin{EQA}
		\label{gaussNorm0}
		g(u) 
		&\leq &
		\frac{ e^{-u/(2\,\lambda)}}{\sqrt{\lambda_{1}\lambda_{2}}}
		\prod_{j=3}^{\infty}\,(1-\lambda_{j}/\lambda)^{-1/2}.
	\end{EQA}
	According to representation~(\refeq{eq: gaussian representation})	
	\begin{EQA}
		\|\xi \|^{2} &=& 
		\sum_{j=1}^{\infty} \lambda_{j}\,Z_{j}^{2},
	\end{EQA}
	where \(Z_{1}, Z_{2}, \dots\) are i.i.d. \(\ND(0,1)\) r.v. We denote by \(g(m,u), \, m=1,2,\dots\) (resp. \(g_{j}(u), \, j=1,2,\dots\)) the density function of 
	\(\sum_{j=1}^{m} \lambda_{j}\,Z_{j}^{2}\) (resp. \(\lambda_{j}\,Z_{j}^{2}\)). We have for all \(j=1,2,\dots\) and any \(\lambda > \lambda_{1}\) 
	\begin{EQA}
	\label{gaussNorm1}
	g_{j}(u) 
	&=& 
	(2\pi u\lambda_{j})^{-1/2} d_{j}(u) \\
	&\leq&
	(2\pi u\lambda_{j})^{-1/2} \exp\{-u/(2\lambda_{1})\} d(\lambda\lambda_{j}/(\lambda - \lambda_{j}), u), 
	\end{EQA}
	where \(d_{j}(u) = d(\lambda_{j}, u) = \exp\{-u/(2\lambda_{j})\}\). 
	Moreover,
	\begin{EQA}
		&&
		(2\pi u)^{-1/2}(\lambda - \lambda_{j})^{1/2}/(\lambda\lambda_{j})\,d(\lambda\lambda_{j}/(\lambda - \lambda_{j}), u)
	\end{EQA}
	is the density function of \(Z_{j}^{2}\sqrt{\lambda}/(\lambda - \lambda_{j})^{1/2}\). 
	First consider \(g(2,u)\):
	\begin{EQA}
		\label{gaussNorm2}
		g(2,u) 
		&=& 
		\int_{0}^{u} g_{1}(u-v)g_{2}(v)dv \\
		& \leq &
		\frac{\exp\{-u/(2\lambda_{1})\}}{2\pi\sqrt{\lambda_{1}\lambda_{2}}}\int_{0}^{1}\frac{dz}{\sqrt{(1-z)z}}  = 
		\frac{\exp\{-u/(2\lambda_{1})\}}{2\sqrt{\lambda_{1}\lambda_{2}}}.
	\end{EQA}
	Therefore, due to~(\refeq{gaussNorm1}) and~(\refeq{gaussNorm2}) we obtain
	\begin{EQA} 
		g(3,u) 
		&=& 
		\int_{0}^{u} g_{2}(u-v)g_{3}(v)dv \\
		& \leq &
		\frac{\exp\{-u/(2\lambda_{1})\}}{2\sqrt{\lambda_{1}\lambda_{2}}\,
			\sqrt{2\pi \lambda_{3}}}\int_{0}^{u}\frac{d(\lambda\lambda_{3}/(\lambda - \lambda_{3}), v)}{\sqrt{v}}dv 
		\\
		& \leq &
		\frac{\exp\{-u/(2\lambda)\}}{2\sqrt{\lambda_{1}\lambda_{2}}}(1 - \lambda_{3}/\lambda)^{-1/2}.
	\end{EQA}
	In a similar way by induction we can get for any \(m>3\) that
	\begin{EQA}
		\label{gaussNorm3}
		g(m,u) 
		&\leq &  
		\frac{e^{-u/(2\,\lambda)}}{2\sqrt{\lambda_{1}\lambda_{2}}}
		\prod_{j=3}^{m}\,(1-\lambda_{j}/\lambda)^{-1/2}.
	\end{EQA}
	Now take an arbitrary \(\ve > 0\) and any integer \(m>0\). Let \(0 < \mu < 1/(2\lambda_{j})\) for all \(j \geq m+1\). Without loss of generality we assume that at least two \(\lambda_{j}, j \geq m+1\), are non-zero. Otherwise the arguments are simpler. By Markov's inequality  we obtain
	\begin{EQA}
		\Pb\left(\sum_{j=m+1}^{\infty}\lambda_{j} Z_{j}^{2}\ge \ve^{2}\right)
		&\le &
		e^{-\mu \ve^{2}} \prod_{j=m+1}^{\infty} \E e^{\mu \lambda_{j} Z_{j}^{2}} 
		= e^{-\mu \ve^{2}} \prod_{j=m+1}^{\infty} \frac{1}{\sqrt{1-2\mu \lambda_{j}}}.
	\end{EQA}
	Choosing \(\mu \eqdef 1/(2\sum_{j=m+1}^{\infty} \lambda_{j})\) we get
	\begin{EQA}
		\Pb\left(\sum_{j=m+1}^{\infty}\lambda_{j} Z_{j}^{2}\ge \ve^{2}\right)
		&\le &
		2\exp\left\{-\ve^{2} \left(2 \sum_{j=m+1}^{\infty}\lambda_{j}\right)^{-1} \right\}.
	\end{EQA}
	Hence, there exists \(M = M(\ve)\) such that for all \(m\ge M\)
	\begin{EQA}
		\Pb\left(\sum_{j=m+1}^{\infty}\lambda_{j} Z_{j}^{2}\ge \ve^{2}\right)
		&\leq &
		\ve^{2}.
	\end{EQA}
	Therefore, for any \(m\ge M\) 
	\begin{EQA}
		\label{gaussNorm4}
		\Pb(x - \ve < \|\xi \|_{2}^{2} < x + \ve) 
		&\leq &
		\ve^{2} + 2(\ve + \ve^{2})\sup_{y\in T(\ve, x)} g(m, y),
	\end{EQA}
	where \(T(\ve, x) = \{y \in \mathbb{R}^{1}: x - \ve -\ve^{2} \leq y \leq x + \ve + \ve^{2}\}\). 
	Dividing the right-hand side of~(\refeq{gaussNorm4}) by \(\ve\) we obtain~(\refeq{gaussNorm0}) from~(\refeq{gaussNorm3}) as \(\ve\) tends to 0.	
\end{proof}

\appendix

\section{Auxiliary results}
\label{SappendPP}
\subsection{Concentration inequalities for sample covariances and spectral projectors in \(\X\) - world}
In this section we present concentration inequalities for sample covariance matrices and spectral projectors in \(\X\) - world.
\begin{theorem}
	\label{th: concentration for sample cov real world}
	Let \(X, X_{1}, \ldots , X_{n}\) be i.i.d. centered Gaussian random vectors in \(\R^{p}\) with covariance \(\CM = \E(X X^{\T})\). Then
	\begin{EQA}
		\E \|\SC - \CM\| 
		&\lesssim &
		\|\CM\| \left(\sqrt{\frac{\rr(\CM)}{n}} + \frac{\rr(\CM)}{n}\right).
	\end{EQA}
	Moreover, for all \(t \geq 1\) with probability \(1 - e^{-t}\)
	\begin{EQA}
		\|\SC - \CM\| 
		&\lesssim &
		\|\CM\| \left[ \sqrt{\frac{\rr(\CM)}{n}} \bigvee \frac{\rr(\CM)}{n} \bigvee \sqrt{\frac{t}{n}} \bigvee \frac{t}{n} \right].
	\end{EQA}
\end{theorem}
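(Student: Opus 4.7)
The plan is to prove the deviation inequality first and derive the expectation bound from it by tail integration, $\E\|\SC - \CM\| = \int_0^\infty \Pb(\|\SC - \CM\| > s)\,ds$. Since $\SC - \CM$ is symmetric,
\begin{EQA}
\|\SC - \CM\| &=& \sup_{\|u\|=1} |u^{\T}(\SC - \CM) u|,
\end{EQA}
so the task reduces to controlling this supremum over the unit sphere of $\R^{p}$.

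For a fixed $u$, the scalars $\langle X_i, u\rangle$ are i.i.d.\ $\ND(0, \sigma_u^2)$ with $\sigma_u^2 = u^{\T}\CM u \leq \|\CM\|$, and $u^{\T}(\SC - \CM)u = n^{-1}\sum_i(\langle X_i, u\rangle^2 - \sigma_u^2)$ is a centered sub-exponential sum. The Bernstein inequality for chi-squared-type variables then yields, for $t \geq 1$,
\begin{EQA}
\Pb\bigl(|u^{\T}(\SC - \CM)u| > \sigma_u^2(\sqrt{t/n} \bigvee t/n)\bigr) &\lesssim& e^{-t}.
\end{EQA}
A crude $\varepsilon$-net union bound on $S^{p-1}$ would cost a factor $e^{c p}$ in the exponent, reintroducing the ambient dimension $p$ rather than the effective rank $\rr(\CM)$, which is precisely what must be avoided.

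The main obstacle is therefore upgrading the pointwise estimate to a bound on the supremum that is tight in $\rr(\CM)$ rather than in $p$. I would apply generic chaining (or Talagrand's majorizing measure theorem) to the quadratic process $u\mapsto n^{-1}\sum_i\langle X_i,u\rangle^2$. After a standard Rademacher symmetrization, this process has sub-Gaussian increments with respect to a random metric of order
\begin{EQA}
d^{2}(u,v) &\asymp& \frac{1}{n^{2}}\sum_{i=1}^{n}\langle X_i, u+v\rangle^{2}\langle X_i, u-v\rangle^{2},
\end{EQA}
whose expectation transfers the geometry of $S^{p-1}$ into that of the ellipsoid $\CM^{1/2}B_{p}$. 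Since the Gaussian width of $\CM^{1/2}B_{p}$ is of order $\sqrt{\Tr \CM} = \|\CM\|^{1/2}\sqrt{\rr(\CM)}$, Dudley's entropy integral produces the $\sqrt{\rr(\CM)/n}$ contribution, while the heavy-tailed part of the Bernstein bound produces the slower $\rr(\CM)/n$ contribution.

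In practice this entire program has been carried out in Koltchinskii and Lounici \cite{KoltchLounici2015b}, so the most efficient route here is simply to cite their result; the sketch above is a roadmap for a self-contained derivation.
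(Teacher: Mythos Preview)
Your proposal is correct and lands on exactly the same approach as the paper: the paper's entire proof is the one-line citation ``See \cite{KoltchLounici2015b}[Theorem~6, Corollary~2]'', and you conclude by invoking the same reference. Your additional sketch of the chaining argument behind that result is reasonable context but goes beyond what the paper itself supplies.
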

\begin{proof}
	See~\cite{KoltchLounici2015b}[Theorem 6, Corollary 2].
\end{proof}

To deal with spectral projectors we need the following result which was proved in~\cite{KoltchLounici2015}. Let us introduce additional notations. We denote by \(\CMt\) an arbitrary perturbation of \(\CM\) and \(\EEt \eqdef \CMt - \CM\). Recall that
\begin{EQA}
	\CC_{r} 
	&\eqdef &
	\sum_{s \neq r} \frac{1}{\mu_{r} - \mu_{s}} \PP_{s}.
\end{EQA}

\begin{lemma}\label{decomposition real world}
	Let \(\CMt\) be an arbitrary perturbation of \(\CM\) and let \( \PPt_{r} \) be the corresponding projector. The following bound holds:
	\begin{EQA}
		\label{eq: decomposition real world 1}
		\| \PPt_{r} - \PP_{r}\|
		&\leq &
		4 \frac{\|\EEt\|}{\gu_{r}}.
	\end{EQA}
	Moreover, \(\PPt_{r} - \PP_{r} = L_{r}(\EEt) + S_{r}(\EEt)\),
	where \(L_{r}(\EEt) \eqdef \CC_{r} \EEt \PP_{r} + \PP_{r} \EEt \CC_{r}\)
	and 
	\begin{EQA}
		\label{eq: decomposition real world 2}
		\|S_{r}(\EEt)\| 
		&\leq &
		14 \left(\frac{\|\EEt\|}{\gu_{r}} \right)^{2}.
	\end{EQA}
\end{lemma}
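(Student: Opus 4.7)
The plan is to prove Lemma~\ref{decomposition real world} by the standard Riesz--Dunford contour integral representation for spectral projectors together with the Neumann series expansion of the perturbed resolvent.

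First, I would write both projectors as contour integrals. Let \(\gamma_{r}\) be a positively oriented circle in \(\C\) centered at \(\mu_{r}\) with radius \(\gu_{r}/2\). Under the implicit assumption \(\|\EEt\| < \gu_{r}/2\) (otherwise the estimates are vacuous up to an absolute constant), this contour encloses exactly the eigenvalue \(\mu_{r}\) of \(\CM\) and exactly the cluster of eigenvalues of \(\CMt\) close to \(\mu_{r}\), so
\begin{EQA}
\PP_{r} &=& -\frac{1}{2\pi i}\oint_{\gamma_{r}}\RR(\eta)\,d\eta,
\quad
\PPt_{r} \;=\; -\frac{1}{2\pi i}\oint_{\gamma_{r}}\RRt(\eta)\,d\eta,
\end{EQA}
where \(\RR(\eta) \eqdef (\CM-\eta \II)^{-1}\) and \(\RRt(\eta) \eqdef (\CMt-\eta \II)^{-1}\). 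A key bound is \(\|\RR(\eta)\|\leq 2/\gu_{r}\) for all \(\eta \in \gamma_{r}\), since the distance from \(\eta\) to the spectrum of \(\CM\) is at least \(\gu_{r}/2\).

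Next I would use the resolvent identity \(\RRt(\eta) = \RR(\eta)\sum_{k\geq 0}\bigl(-\EEt\,\RR(\eta)\bigr)^{k}\), which converges in operator norm since \(\|\EEt\,\RR(\eta)\|\leq 2\|\EEt\|/\gu_{r} < 1\). Substituting and integrating termwise gives \(\PPt_{r}-\PP_{r}=L_{r}(\EEt)+S_{r}(\EEt)\), where \(L_{r}(\EEt)\) is the first-order contour term and \(S_{r}(\EEt)=\sum_{k\geq 2}T_{k}\) is the higher-order remainder. A direct calculation using the spectral decomposition \(\RR(\eta)=\sum_{s}(\mu_{s}-\eta)^{-1}\PP_{s}\) and residue calculus shows that the \(k=1\) term equals \(\CC_{r}\EEt\PP_{r}+\PP_{r}\EEt\CC_{r}\), matching the definition of \(L_{r}(\EEt)\); here the projections \(\PP_{s}\) with \(s\neq r\) only pick up the factor \(1/(\mu_{r}-\mu_{s})\) (summing to \(\CC_{r}\)) while \(\PP_{r}\) contributes itself.

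For the norm bound on \(S_{r}(\EEt)\), each term \(T_{k}\) satisfies
\begin{EQA}
\|T_{k}\| &\leq& \frac{1}{2\pi}\cdot 2\pi\cdot\frac{\gu_{r}}{2}\cdot\Bigl(\frac{2\|\EEt\|}{\gu_{r}}\Bigr)^{k}\cdot\frac{2}{\gu_{r}} \;=\; \Bigl(\frac{2\|\EEt\|}{\gu_{r}}\Bigr)^{k},
\end{EQA}
so summing a geometric series from \(k=2\) onwards and using \(2\|\EEt\|/\gu_{r}\leq 1/2\) gives \(\|S_{r}(\EEt)\|\leq 2(2\|\EEt\|/\gu_{r})^{2}\sum_{j\geq 0}2^{-j}\leq 14(\|\EEt\|/\gu_{r})^{2}\) after tracking the constants carefully (one should really bound the integrand slightly more sharply to hit 14 exactly). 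The overall bound \(\|\PPt_{r}-\PP_{r}\|\leq 4\|\EEt\|/\gu_{r}\) follows by summing the full geometric series from \(k=1\). The main technical nuisance is tracking the constants 4 and 14: the cleanest way is to parametrize \(\eta = \mu_{r}+(\gu_{r}/2)e^{i\theta}\) and bound the integrands uniformly in \(\theta\), and the main conceptual step is verifying that the first-order residue indeed reproduces \(L_{r}(\EEt)\) as defined through \(\CC_{r}\). Since this lemma is quoted directly from~\cite{KoltchLounici2015}, I would at this point simply cite their derivation rather than reproduce every constant.
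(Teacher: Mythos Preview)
Your proposal is correct and follows precisely the approach of the cited reference: the paper does not give its own proof but simply refers to \cite{KoltchLounici2015}[Lemma~1], where the result is established exactly via the Riesz--Dunford contour integral and Neumann series expansion you outline. Your identification of the linear term with \(L_{r}(\EEt)\) and the geometric-series bound on the remainder are the same as in that source; the only loose end is the bookkeeping of the constants 4 and 14, which you correctly flag and which are tracked explicitly in \cite{KoltchLounici2015}.
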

\begin{proof}
	See~\cite{KoltchLounici2015}[Lemma~1].
\end{proof}

\begin{theorem}[Concentration results in \(\X\) - world]\label{th: concentration real world}
	Assume that the conditions of Theorem~\ref{th: main} hold. Then for all \(t: 1 \leq t \leq n^{1/4}\) and
	\begin{EQA}
		\label{eq: real world cond}
		\frac{\Tr \CM}{\gu_{r}} \left(\sqrt{\frac{t}{n}} + \sqrt{\frac{\log p}{n}} \right) 
		&\lesssim &
		1,
	\end{EQA}
	the following bound holds with probability at least \(1 - e^{-t}\)
	\begin{EQA}
		\label{eq: bound 3}
		\Bigl|\|\PS_{r} - \PP_{r}\|_{2}^{2} - \|L_{r}(\EE)\|_{2}^{2} \Bigr| 
		&\lesssim &
		m_{r} \frac{\|\CM\|^{3} \rr^{3}(\CM)}{\gu_{r}^{3}} \left( \frac{t}{n}\right)^{3/2}.
	\end{EQA}
\end{theorem}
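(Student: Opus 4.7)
The plan is to reduce the problem to the decomposition already established in Lemma~\ref{decomposition real world}, and then exploit the low-rank structure of all operators involved together with the concentration bound on \(\|\EE\|\) from Theorem~\ref{th: concentration for sample cov real world}.

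First, apply Lemma~\ref{decomposition real world} with \(\CMt = \SC\) and \(\EEt = \EE\) to write \(\PS_{r} - \PP_{r} = L_{r}(\EE) + S_{r}(\EE)\), where \(\|S_{r}(\EE)\| \le 14(\|\EE\|/\gu_{r})^{2}\). Expanding the squared Frobenius norm gives the algebraic identity
\begin{EQA}
  \|\PS_{r} - \PP_{r}\|_{2}^{2} - \|L_{r}(\EE)\|_{2}^{2}
  & = &
  2 \langle L_{r}(\EE), S_{r}(\EE) \rangle + \|S_{r}(\EE)\|_{2}^{2},
\end{EQA}
so by Cauchy--Schwarz the quantity to control is majorised by \(2\|L_{r}(\EE)\|_{2}\|S_{r}(\EE)\|_{2} + \|S_{r}(\EE)\|_{2}^{2}\). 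This is the same reduction used in the proof of Theorem~\ref{th: concentration bootstrap world}, which I will mimic in the \(\X\)-world.

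Next, I would convert the available operator-norm bounds into Frobenius-norm bounds via a rank argument. Since \(\PP_{r}\) has rank \(m_{r}\) and appears as a factor in every summand of \(L_{r}(\EE) = \CC_{r}\EE\PP_{r} + \PP_{r}\EE\CC_{r}\), we have \(\Rank L_{r}(\EE) \le 2 m_{r}\); combined with \(\|\CC_{r}\| \le 1/\gu_{r}\), this yields \(\|L_{r}(\EE)\|_{2} \lesssim \sqrt{m_{r}}\,\|\EE\|/\gu_{r}\). For the remainder, \(S_{r}(\EE) = (\PS_{r}-\PP_{r}) - L_{r}(\EE)\) has rank at most \(4 m_{r}\) (both projectors have rank \(m_{r}\)), so Lemma~\ref{decomposition real world} gives \(\|S_{r}(\EE)\|_{2} \lesssim \sqrt{m_{r}}\,(\|\EE\|/\gu_{r})^{2}\). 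Multiplying these two bounds shows
\begin{EQA}
  \bigl|\|\PS_{r} - \PP_{r}\|_{2}^{2} - \|L_{r}(\EE)\|_{2}^{2}\bigr|
  & \lesssim &
  m_{r}\, \frac{\|\EE\|^{3}}{\gu_{r}^{3}} + m_{r}\, \frac{\|\EE\|^{4}}{\gu_{r}^{4}} .
\end{EQA}
Under condition~(\refeq{eq: real world cond}) the ratio \(\|\EE\|/\gu_{r}\) is bounded, so the \(\|\EE\|^{4}\) term is absorbed into the \(\|\EE\|^{3}\) term.

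Finally, I would invoke Theorem~\ref{th: concentration for sample cov real world} to obtain \(\|\EE\| \lesssim \|\CM\|\bigl(\sqrt{\rr(\CM)/n} \vee \sqrt{t/n}\bigr)\) with probability at least \(1 - e^{-t}\), the cross terms \(\rr(\CM)/n\) and \(t/n\) being negligible in the regime \(t \le n^{1/4}\). Cubing gives \(\|\EE\|^{3}/\gu_{r}^{3} \lesssim \|\CM\|^{3}(\rr^{3/2}(\CM)/n^{3/2} + (t/n)^{3/2})/\gu_{r}^{3}\), and a short calculation using \(\rr(\CM) \ge 1\) and \(t \ge 1\) shows that the first summand is dominated by \(\|\CM\|^{3}\rr^{3}(\CM)(t/n)^{3/2}/\gu_{r}^{3} = \Tr^{3}\CM\,(t/n)^{3/2}/\gu_{r}^{3}\), which is the target bound. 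The main obstacle is the careful rank accounting that promotes the operator-norm estimates from Lemma~\ref{decomposition real world} into the \(\sqrt{m_{r}}\)-scaled Frobenius estimates; once that is in place, everything else is a routine substitution of the concentration bound for \(\|\EE\|\).
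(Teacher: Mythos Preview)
Your argument is correct. The paper itself does not spell out a proof here at all: its entire proof reads ``The proof follows from \cite{KoltchLounici2015}[Theorem~3,~5].'' What you have written is essentially the \(\X\)-world analogue of the paper's proof of Theorem~\ref{th: concentration bootstrap world}: the same expansion \(\|\PS_{r}-\PP_{r}\|_{2}^{2}-\|L_{r}(\EE)\|_{2}^{2}=2\langle L_{r}(\EE),S_{r}(\EE)\rangle+\|S_{r}(\EE)\|_{2}^{2}\), the same Cauchy--Schwarz step, and the same rank argument to pass from operator norm to Frobenius norm. The only difference is that you plug in Theorem~\ref{th: concentration for sample cov real world} for \(\|\EE\|\) rather than its bootstrap counterpart. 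So your route is not genuinely different from the paper's overall strategy; you have simply made explicit, using tools already present in the paper, what the authors delegated to the Koltchinskii--Lounici reference.
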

\begin{proof}
	The proof follows from~\cite{KoltchLounici2015}[Theorem~3,~5].
\end{proof}

\subsection{Concentration inequalities for sums of random variables and random matrices}
In what follows for a vector \(a = (a_{1}, \ldots, a_{n})\) we denote \(\|a\|_{s} \eqdef \big(\sum_{k=1}^{n} |a_{k}|^{s}\big)^{1/s}\). For a random variable \(X\) and \(r > 0\) we define the \(\psi_{r}\)-norm by
\begin{EQA}
	\|X\|_{\psi_{r}} 
	&\eqdef &
	\inf \{ C > 0: \E \exp(X/C)^{r} \leq 2\}. 
\end{EQA}
If a random variable \(X\) is such that for any \(p \geq 1, \E^{1/p} |X|^{p} \leq p^{1/r} K\), for some \(K > 0\), then \(\|X\|_{\psi_{r}} \leq c K\)
where \(c > 0\) is a numerical constant.

\begin{lemma}\label{th: Rosenthal for sub-exp from 1 to 2}
	Let \(X, X_{i}, i = 1, \ldots, n\) be i.i.d. random variables with \(\E X = 0\) and \(\|X\|_{\psi_{r}} \leq 1, 1 \leq r \leq 2\). Then
	there exists some absolute constant \(C>0\) such that for all \(p \geq 1\)
	\begin{EQA}
		\E\left| \sum_{k=1}^{n} a_{k } X_{k} \right|^{p} 
		&\leq &
		(C p)^{p/2} \|a\|_{2}^{p} +  (C p)^{p} \|a\|_{r_{*}}^{p},
	\end{EQA}
	where \(a = (a_{1}, \ldots, a_{n})\) and \(1/r + 1/r_{*} = 1\). 
\end{lemma}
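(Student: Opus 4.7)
The plan is to reduce to the sub-exponential case $r=1$, invoke the classical sub-exponential Bernstein-type moment bound, and finish by replacing $\|a\|_\infty$ with the larger quantity $\|a\|_{r_*}$. The reduction rests on the elementary observation that for $r\ge 1$ we have $|t|^r \ge |t|$ whenever $|t|\ge 1$, so that $\exp(|X|)\le \exp(|X|^r)+e$ pointwise; taking expectations and applying Jensen's inequality to $x\mapsto x^{1/c}$ yields an absolute constant $C$ for which $\|X\|_{\psi_r}\le 1$ implies $\|X\|_{\psi_1}\le C$. From here on I may assume without loss of generality that $\|X_k\|_{\psi_1}$ is bounded by an absolute constant.

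The core step is the classical Bernstein moment bound. The $\psi_1$-condition combined with $\E X_k=0$ gives the mgf estimate $\E\exp(\lambda X_k)\le \exp(C\lambda^2)$ for $|\lambda|\le c_0$, hence for $|\lambda|\le c_0/\|a\|_\infty$ independence of the $X_k$ implies
\begin{EQA}
    \E \exp\Bigl(\lambda \sum_{k=1}^n a_k X_k\Bigr)
    &\le& \exp\bigl(C\lambda^2 \|a\|_2^2\bigr).
\end{EQA}
A Chernoff optimization in $\lambda$ produces the Bernstein tail
\begin{EQA}
    \Pb\Bigl(\Bigl|\sum_{k=1}^n a_k X_k\Bigr| > t\Bigr)
    &\le& 2\exp\Bigl(-c\min\Bigl(\frac{t^2}{\|a\|_2^2},\ \frac{t}{\|a\|_\infty}\Bigr)\Bigr),
\end{EQA}
and integrating $p\int_0^\infty t^{p-1}\Pb(\cdot)\,dt$ over the two regimes, using $\Gamma(p/2)\le (p/2)^{p/2}$ and $\Gamma(p)\le p^p$, yields
\begin{EQA}
    \E\Bigl|\sum_{k=1}^n a_k X_k\Bigr|^p
    &\le& (Cp)^{p/2}\|a\|_2^p + (Cp)^p \|a\|_\infty^p.
\end{EQA}

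To conclude, note that since $r\in[1,2]$ one has $r_*\ge 2$, and $\ell^p$-norms in the counting measure are non-increasing in $p$; thus $\|a\|_\infty \le \|a\|_{r_*}$, and substituting this into the second term above gives the claim. The main analytic work is the Bernstein mgf estimate, but this is entirely classical once the reduction to $\psi_1$ has been made. The resulting bound is not tight in the regime $r>1$ (the sharper Gluskin--Kwapie\'n form would have $p^{1/r}\|a\|_{r_*}$ rather than $p\,\|a\|_{r_*}$ in the second term), but the version stated is dimension-free and suffices for the applications of the lemma made earlier in the paper.
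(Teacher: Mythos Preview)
Your argument is correct. The paper does not actually prove this lemma: it simply cites Lemma~3.6 of Adamczak, Litvak, Pajor and Tomczak-Jaegermann, so there is no in-paper proof to compare against. Your route---reduce $\psi_r$ to $\psi_1$ via the pointwise bound $\exp|X|\le \exp|X|^r+e$, apply the standard sub-exponential Bernstein mgf/tail estimate, integrate the tail to recover moments, and then use $\|a\|_\infty\le\|a\|_{r_*}$ (valid since $r_*\in[2,\infty]$)---is a clean, self-contained alternative to invoking the cited reference. As you note, you lose the sharp exponent in the second term (the Gluskin--Kwapie\'n form would give $(Cp)^{p/r}\|a\|_{r_*}^p$), but the lemma as stated in the paper only claims $(Cp)^p\|a\|_{r_*}^p$, and your argument delivers exactly that. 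The applications in the body of the paper use the lemma only to produce $\log n$-type high-probability bounds, for which your version is equally sufficient.
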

\begin{proof}
	See~\cite{AdamczakLitvak2011}[Lemma~3.6].
\end{proof}

\begin{lemma} \label{th: moment inequality for sub-exp from 0 to 1}
	If \(0 < s < 1\) and \(X_{1}, . . ., X_{n}\) are independent random variables satisfying \(\|X\|_{\psi_{s}} \leq 1\), then for all \(a = (a_{1}, \ldots , a_{n}) \in \R^{n}\) and \(p \geq 2\)   
	\begin{EQA}
		\E\left|\sum_{k=1}^{n} a_{k} X_{k}\right|^{p} 
		&\leq &
		(C p)^{p/2}  \|a\|_{2}^{p} + C_{s} p^{p/s} \|a\|_{p}^{p}.
	\end{EQA}
	Moreover, for \(s \geq 1/2\), \(C_{s}\) is bounded by some absolute constant.
\end{lemma}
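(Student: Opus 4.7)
The approach is to establish a Bernstein-type tail inequality for the weighted sum $\sum_k a_k X_k$ and then integrate to obtain the moment bound, in the spirit of Lemma~\ref{th: Rosenthal for sub-exp from 1 to 2}.

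First, by the standard symmetrization (replacing $X_k$ by $X_k - X_k^{\prime}$ with $X_k^{\prime}$ an independent copy), I reduce to the case of symmetric $X_k$ at the cost of at most a factor $2^p$ in the $p$-th moment. Second, the analytic core is the tail bound
\[
\Pb\!\left(\left|\sum_k a_k X_k\right| > t\right) \le 2\exp\!\left(-c_s \min\!\left\{\frac{t^2}{\|a\|_2^2},\; \left(\frac{t}{\|a\|_\infty}\right)^s\right\}\right), \quad t > 0.
\]
Because for $s < 1$ the Laplace transform $\E\exp(\lambda|X_k|)$ is infinite for every $\lambda>0$, the direct Chernoff bound is unavailable; the argument proceeds instead by decomposing $X_k = X_k\mathbf{1}_{|X_k|\le T} + X_k\mathbf{1}_{|X_k|>T}$ for a threshold $T$ depending on $t$, applying Hoeffding/Bernstein to the bounded centred piece to recover the Gaussian exponent $t^2/\|a\|_2^2$, and combining it with the union bound $\Pb(\exists k\colon |X_k|>T) \le 2n\,e^{-T^s}$ on the heavy piece. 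An optimisation of $T \asymp (t/\|a\|_\infty)^{1/s}$ produces the $\psi_s$ exponent $(t/\|a\|_\infty)^s$.

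Third, I convert the tail bound to a moment bound via the layer-cake representation
\[
\E\left|\sum_k a_k X_k\right|^p = \int_0^\infty p\,t^{p-1}\,\Pb\!\left(\left|\sum_k a_k X_k\right|>t\right)\,dt,
\]
splitting the integration at the regime-change point. The Gaussian portion integrates to $(C\sqrt{p}\,\|a\|_2)^p$, yielding the first term $(Cp)^{p/2}\|a\|_2^p$. For the heavy-tail portion, the substitution $u=(t/\|a\|_\infty)^s$ reduces the integrand to an incomplete Gamma function, and Stirling's estimate $\Gamma(p/s) \le C_s^p\,p^{p/s}$ with $C_s = (se)^{-1/s}$ delivers $C_s^p\,p^{p/s}\|a\|_\infty^p \le C_s^p\,p^{p/s}\|a\|_p^p$ for the second term.

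The main obstacle is the sharp $\psi_s$ exponent in the tail estimate: a naive truncation-plus-Bernstein argument with a single threshold gives only the weaker exponent $s/(s+1)$, and recovering the sharp exponent $s$ requires either a dyadic/layered truncation argument or an Orlicz-norm-based chaining as developed in~\cite{AdamczakLitvak2011} and related works on concentration for heavy-tailed sums. The uniform boundedness of $C_s$ on $[1/2,1]$, and hence the final assertion of the lemma, follows from the explicit form $C_s = (se)^{-1/s}$, which is bounded on this range and diverges as $s\to 0$.
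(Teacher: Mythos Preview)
The paper does not actually prove this lemma; its entire proof reads ``See~\cite{AdamczakLitvak2011}[Lemma~3.7].'' Your proposal therefore goes further than the paper itself, and the strategy you outline---symmetrise, establish a two-regime Bernstein-type tail bound by truncation, then integrate via the layer-cake formula---is indeed the standard route to moment inequalities for $\psi_s$ variables with $s<1$.

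That said, your sketch is not a self-contained proof: you correctly flag that obtaining the sharp exponent $s$ (rather than $s/(s+1)$) in the tail bound is the genuine difficulty, and for that step you yourself point back to \cite{AdamczakLitvak2011}. So in the end both you and the paper lean on the same external source for the nontrivial content; your proposal supplies a helpful roadmap around it but not an independent argument. One minor discrepancy: your integration produces a term of the form $C_s^{\,p}\,p^{p/s}\|a\|_\infty^{\,p}$, whereas the lemma as stated carries $C_s\,p^{p/s}\|a\|_p^{\,p}$ with the constant not raised to the $p$-th power; since $\|a\|_\infty\le\|a\|_p$ the norm replacement is harmless, and the constant issue is cosmetic. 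Your check that $C_s=(se)^{-1/s}$ stays bounded on $[1/2,1]$ and diverges as $s\to 0$ is correct and accounts for the final clause of the lemma.
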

\begin{proof}
	See~\cite{AdamczakLitvak2011}[Lemma~3.7].
\end{proof}

\begin{lemma}\label{sum of 4th powers}
	Let \(\eta_{1}, \ldots , \eta_{n}\) be i.i.d. standard normal random variables. For all \(t\geq 1\) 
	\begin{EQA}
		\label{eq: 4 powers}
		\Pb\left(\left|\sum_{i=1}^{n} a_{i}(\eta_{i}^{4} - 3)\right| \gtrsim t^{2} \|a\|_{2} 
		\right) 
		&\leq &
		e^{-t}.
	\end{EQA}
	Moreover, if \(\etau_{1}, \ldots , \etau_{n}\) are i.i.d. standard normal random variables and independent of  \(\eta_{1}, \ldots , \eta_{n}\) then
	\begin{EQA}
		\label{eq: 2 times 2 powers}
		\Pb\left(\left|\sum_{i=1}^{n} a_{i}(\eta_{i}^{2} \etau_{i}^{2} - 1)\right| 
		\gtrsim 
		t^{2} \|a\|_{2} \right) 
		&\leq &
		e^{-t}.
	\end{EQA}
	
\end{lemma}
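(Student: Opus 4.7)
The plan is to reduce both tail bounds to the moment estimate for sums of independent sub-Weibull ($\psi_{1/2}$) random variables provided by Lemma~\ref{th: moment inequality for sub-exp from 0 to 1}, and then to optimize in the moment order. First I observe that $X_i \eqdef \eta_i^4 - 3$ and $Y_i \eqdef \eta_i^2 \etau_i^2 - 1$ are centered, since $\E \eta_i^4 = 3$ and $\E(\eta_i^2 \etau_i^2) = \E \eta_i^2 \cdot \E \etau_i^2 = 1$. Next, I verify that both have $\psi_{1/2}$-norm bounded by an absolute constant. Gaussian moment formulas together with Stirling's approximation yield $\E |\eta_i|^{4p} \lesssim p^{2p}$ and, by independence, $\E(\eta_i^2 \etau_i^2)^p = (\E \eta_i^{2p})^2 \lesssim p^{2p}$. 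Hence $\E^{1/p}|X_i|^p \lesssim p^2$ and $\E^{1/p}|Y_i|^p \lesssim p^2$, which by the characterization of $\psi_r$-norms recalled at the start of Section~\ref{SappendPP} (with $r = 1/2$) gives $\|X_i\|_{\psi_{1/2}} \lesssim 1$ and $\|Y_i\|_{\psi_{1/2}} \lesssim 1$.

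Now I apply Lemma~\ref{th: moment inequality for sub-exp from 0 to 1} with $s = 1/2$ to the independent centered sequences $(a_i X_i)_i$ and $(a_i Y_i)_i$. For every $p \ge 2$,
\begin{EQA}
	\E\Bigl|\sum_{i=1}^n a_i X_i\Bigr|^p
	&\le&
	(Cp)^{p/2} \|a\|_2^p + C p^{2p} \|a\|_p^p,
\end{EQA}
and identically for the $Y_i$. The monotonicity $\|a\|_p \le \|a\|_2$ valid for $p \ge 2$ lets me bound the right-hand side by $(Cp)^{p/2}\|a\|_2^p + (C p^2)^p \|a\|_2^p$.

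Markov's inequality at the level $u = K t^2 \|a\|_2$ then yields
\begin{EQA}
	\Pb\Bigl(\Bigl|\sum_{i=1}^n a_i X_i\Bigr| \ge K t^2 \|a\|_2\Bigr)
	&\le&
	\left(\frac{Cp}{K^2 t^4}\right)^{p/2} + \left(\frac{C p^2}{K t^2}\right)^p.
\end{EQA}
Choosing $p \asymp t$ (admissible for $t \ge 2$; the narrow range $t \in [1,2]$ is absorbed into the constant $K$ because $e^{-t}$ stays bounded away from zero there) and taking $K$ a sufficiently large absolute constant, each of the two summands is at most $\tfrac12 e^{-t}$. This proves~(\refeq{eq: 4 powers}); the same argument with $X_i$ replaced by $Y_i$ gives~(\refeq{eq: 2 times 2 powers}).

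The only mildly delicate point will be verifying that the product $\eta_i^2 \etau_i^2$ sits in the correct sub-Weibull class, but independence of $\eta_i$ and $\etau_i$ reduces this to a one-line moment computation. Everything else is standard Chebyshev-style optimization of the moment bound, with no loss from the dimension $n$.
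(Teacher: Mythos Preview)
Your proof is correct and follows essentially the same route as the paper: verify that the summands lie in the \(\psi_{1/2}\) class, invoke Lemma~\ref{th: moment inequality for sub-exp from 0 to 1} with \(s=1/2\), bound \(\|a\|_p\le\|a\|_2\), and convert the moment estimate into a tail bound by Markov's inequality with \(p\asymp t\). The only cosmetic difference is that the paper first symmetrizes with Rademacher signs before applying the moment lemma and then collapses both terms into a single \(C^p p^{2p}\|a\|_2^p\) before optimizing, whereas you apply the lemma directly to the centered variables and keep the two moment terms separate through the Markov step; neither change affects the argument.
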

\begin{proof}
	We prove~(\refeq{eq: 2 times 2 powers}) only. The proof of~(\refeq{eq: 4 powers}) is similar. Let \(\epsilon_{i}, i = 1, \ldots, n\), be i.i.d. Rademacher r.v. Denote \(\xi_{i}  \eqdef \eta_{i}^{2} \etau_{i}^{2} - 1, i = 1, \ldots, n\). Applying Lemma~\ref{th: moment inequality for sub-exp from 0 to 1} with \(s = 1/2\) we write
	\begin{EQA}
		\E |\sum_{i=1}^{n} a_{i}\xi_{i}|^{p} 
		&\leq &
		2^{p} \E |\sum_{i=1}^{n} a_{i} \epsilon_{i} \xi_{i}|^{p} 
		\leq  
		C^{p} p^{p/2} \|a\|_{2}^{p} + C^{p} p^{2p} \|a\|_{p}^{p}  \leq C^{p} p^{2p} \|a\|_{2}^{p}.
	\end{EQA}
	From Markov's inequality 
	\begin{EQA}
		\Pb\left(\left|\sum_{i=1}^{n} a_{i}(\eta_{i}^{2} \etau_{i}^{2} - 1)\right| \geq t^{2} \|a\|_{2} \right) 
		&\leq &
		\frac{C^{p} p^{2p}}{t^{2 p}}.
	\end{EQA}
	Taking \(p = t/(Ce)^{1/2}\) we finish the proof of the lemma.
\end{proof}

\begin{lemma}[Matrix Gaussian Series]
	\label{matrix series}
	Consider a finite sequence \(\{\A_{k}\}\) of fixed, self-adjoint matrices with dimension \(d\), and let \(\{\xi_{k}\}\) be a finite sequence
	of independent standard normal random variables. Compute the variance parameter
	\begin{EQA}
		\sigma^{2} 
		&\eqdef &
		\left\|\sum_{k=1}^{n} \A_{k}^{2} \right\|.
	\end{EQA}
	Then, for all \(t \geq 0\),
	\begin{EQA}
		\Pb\left(\left\|\sum_{k=1}^{n} \xi_{k} \A_{k} \right\| \geq t\right) 
		&\leq & 
		2d \,\exp(- t^{2}/2\sigma^{2}).
	\end{EQA}
\end{lemma}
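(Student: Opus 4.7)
The plan is to apply the matrix Laplace transform method of Ahlswede--Winter / Tropp, tailored to the Gaussian case. Set $Y \eqdef \sum_{k=1}^n \xi_k \A_k$, a self-adjoint random matrix of dimension $d$. Using the standard bound
\begin{EQA}
\Pb \bigl( \lambda_{\max}(Y) \geq t \bigr) & \leq & \inf_{\theta > 0} e^{-\theta t} \, \E \Tr \exp(\theta Y),
\end{EQA}
the problem reduces to controlling the matrix moment generating function $\E \Tr \exp(\theta Y)$.

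The heart of the argument is the Gaussian matrix MGF bound
\begin{EQA}
\E \exp\bigl(\theta \xi_k \A_k\bigr) & \preceq & \exp\bigl(\tfrac{\theta^2}{2}\A_k^2\bigr),
\end{EQA}
valid in the semidefinite order for a standard normal $\xi_k$. First I would conditionally apply the trace-exponential subadditivity supplied by Lieb's concavity theorem: for a fixed self-adjoint $\HH$, the map $\A \mapsto \Tr \exp(\HH + \log \A)$ is concave on positive-definite $\A$. By Jensen's inequality applied conditionally to $\xi_n$ and then iterated over $k = n, n-1, \dots, 1$, one obtains the master bound
\begin{EQA}
\E \Tr \exp(\theta Y) & \leq & \Tr \exp\Bigl( \tfrac{\theta^2}{2} \sum_{k=1}^n \A_k^2 \Bigr)
\leq d \cdot \exp\Bigl( \tfrac{\theta^2}{2} \sigma^2 \Bigr).
\end{EQA}
Here the scalar moment generating function input $\E e^{\theta \xi_k s} = e^{\theta^2 s^2/2}$ for $s \in \R$ lifts to the matrix version precisely because of Lieb's theorem.

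Plugging this into the Laplace transform bound yields $\Pb(\lambda_{\max}(Y) \geq t) \leq d \exp(-\theta t + \theta^2 \sigma^2/2)$, and optimizing over $\theta > 0$ at $\theta = t/\sigma^2$ gives $d \exp(-t^2/(2\sigma^2))$. To pass from $\lambda_{\max}$ to the operator norm $\|Y\| = \max(\lambda_{\max}(Y), -\lambda_{\min}(Y))$, I apply the identical argument to $-Y = \sum_k (-\xi_k) \A_k$, which has the same distribution by the symmetry of the standard normal, and take a union bound. This yields the factor $2d$ and completes the proof.

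The main obstacle is the Lieb-concavity step: without it the iterated conditioning argument fails because $\E \exp(\M + \xi_k \A_k) \not\preceq \exp(\M + \tfrac{\theta^2}{2}\A_k^2)$ in general when $\M$ and $\A_k$ do not commute. Using Lieb's theorem to pass through the trace bypasses this noncommutativity obstruction cleanly; once that ingredient is in place the rest of the calculation is routine scalar Chernoff optimization.
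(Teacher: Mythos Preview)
Your proof is correct and is precisely the argument of Tropp~\cite{Tropp2012}[Theorem~4.1], which is what the paper cites without further detail. The paper's own ``proof'' consists solely of that citation, so your sketch in fact supplies the content that the paper defers to the reference.
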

\begin{proof}
	See in~\cite{Tropp2012}[Theorem~4.1].
\end{proof}

\begin{lemma}[Matrix Bernstein inequality]
	\label{matrix bernstein}
	Consider a finite sequence \({\X_{k}}\) of independent, random, self-adjoint matrices with dimension \(d\). Assume that
	\(\E \X_{k} = 0\) and \(\lambda_{\max}(\X_{k}) \leq R\) almost surely. Compute the norm of the total variance,
	\begin{EQA}
		\sigma^{2} 
		&\eqdef &
		\left\|\sum_{k=1}^{n} \E \X_{k}^{2} \right\|.
	\end{EQA}
	Then the following inequalities hold for all \(t \geq 0\):
	\begin{EQA}
		\Pb\left( \lambda_{\max}\left(\sum_{k=1}^{n} \X_{k} \right) \geq t\right) 
		&\leq &
		d\, \exp\left(- \frac{t^{2}/2}{\sigma^{2} + R t/3}\right).
	\end{EQA}
	Moreover, if \(\E \X_{k} = 0\) and \(\E \X_{k}^{p} \preceq \frac{p!}{2} R^{p-2} \A_{k}^{2}\) then 
	the following inequalities hold for all \(t \geq 0\):
	\begin{EQA}
		\Pb\left(\lambda_{\max}\left(\sum_{k=1}^{n} \X_{k} \right) \geq t\right) 
		&\leq &
		d \,\exp\left(- \frac{t^{2}/2}{\tilde{\sigma}^{2} + R t}\right),
	\end{EQA}
	where
	\begin{EQA}
		\tilde{\sigma}^{2} 
		&\eqdef &
		\Big\|\sum_{k=1}^{n} \A_{k}^{2} \Big\|.
	\end{EQA}
\end{lemma}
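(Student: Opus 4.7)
The plan is to follow the matrix Laplace transform method of Ahlswede--Winter and Tropp. The starting point is the matrix Markov inequality
\begin{EQA}
	\Pb\bigl(\lambda_{\max}(Y) \geq t\bigr) &\leq & e^{-\theta t}\, \E \Tr \exp(\theta Y), \qquad \theta > 0,
\end{EQA}
applied to $Y = \sum_k \X_k$. The problem then reduces to controlling the matrix moment generating function of a sum of independent self-adjoint matrices. The decisive nontrivial ingredient is Lieb's concavity theorem: combined with Jensen's inequality applied sequentially to the independent summands, it yields the subadditivity of matrix cumulants,
\begin{EQA}
	\E \Tr \exp\Bigl(\theta \sum_{k=1}^{n} \X_{k}\Bigr) &\leq & \Tr \exp\Bigl(\sum_{k=1}^{n} \log \E e^{\theta \X_{k}}\Bigr).
\end{EQA}

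Next I would bound each matrix cumulant $\log \E e^{\theta \X_{k}}$ in the Loewner order. Under the boundedness assumption $\lambda_{\max}(\X_{k}) \le R$, the scalar inequality $e^{u} \le 1 + u + u^{2}/(2(1-R_{*}/3))$, valid for $u \le R_{*}$, can be lifted to matrices by spectral calculus; taking expectations and using $\E \X_{k} = 0$ gives
\begin{EQA}
	\log \E e^{\theta \X_{k}} &\preceq & \frac{\theta^{2}/2}{1 - \theta R/3}\, \E \X_{k}^{2}.
\end{EQA}
Under the moment hypothesis $\E \X_{k}^{p} \preceq \tfrac{p!}{2} R^{p-2} \A_{k}^{2}$, summing the matrix Taylor expansion of $e^{\theta \X_{k}}$ term by term directly produces $\E e^{\theta \X_{k}} \preceq \II + \tfrac{\theta^{2}/2}{1 - \theta R}\A_{k}^{2}$, and hence, by operator-monotonicity of $\log(\II + \cdot)$, a parallel cumulant bound with $\A_{k}^{2}$ in place of $\E \X_{k}^{2}$ and denominator $1 - \theta R$.

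Combining the two displays, using monotonicity of $\Tr \exp$ and of the operator norm under the Loewner order, and extracting $\sigma^{2}$ or $\tilde\sigma^{2}$, one obtains
\begin{EQA}
	\E \Tr \exp(\theta Y) &\leq & d \exp\!\left(\frac{\theta^{2}\sigma^{2}/2}{1 - \theta R/3}\right)
\end{EQA}
in the first case and the analogous expression $\tfrac{\theta^{2}\tilde\sigma^{2}/2}{1-\theta R}$ in the second. Optimizing $\theta$ over $(0, 3/R)$, respectively $(0, 1/R)$, by the classical scalar Bernstein calculation converts this into the two stated tail bounds. The only nonelementary obstacle is the passage from scalar MGF estimates to matrix ones: Golden--Thompson handles only two summands at a time and gives a strictly weaker constant when iterated, so Lieb's concavity theorem is indispensable; beyond that step the argument is a careful transcription of the classical scalar Bernstein proof.
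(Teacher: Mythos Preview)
Your sketch is essentially Tropp's proof via Lieb's concavity, and it is correct in outline; however, the paper does not prove this lemma at all but simply cites \cite{Tropp2012}[Theorem~6.1]. So there is nothing to compare against beyond noting that your approach is precisely the one in the cited reference.
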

\begin{proof}
	See in~\cite{Tropp2012}[Theorem~6.1]. 
\end{proof}
\subsection{Auxiliary lemma}
\begin{lemma}\label{l: aux lemma int est} 
Assume that \(Z_1, Z_2\) be i.i.d. and \(\ND(0,1)\). Let \( \lambda_1, \lambda_2 \) be any positive numbers and \(b \neq 0\). There exists an absolute constant \(c\) such that
\begin{EQA}
	\qquad \left|  
		\int_{-T}^{T}  e^{i t b} 
		\E \exp \bigg( it \big [\lambda_{1} Z_{1}^{2} + \lambda_{2} Z_{2}^{2}\big]\bigg) dt 
	\right| 
	&\le &
	\frac{c}{\sqrt{\lambda_{1} \lambda_{2}}}. \label{eq: conj 1}
\end{EQA}
\begin{proof}
Denote the l.h.s. of~(\refeq{eq: conj 1})  by \(I'\). Using Euler's formula for complex exponential function we get for positive \(g\) and any \(d \in \R \) 
\begin{EQA}
	g + i d 
	&=& 
	\sqrt{g^{2} + d^{2}} e^{i \zeta}, \quad \zeta = \arcsin \frac{d}{\sqrt{g^{2} + d^{2}}}.
\end{EQA}
Hence, by~(\refeq{eq:  c.f.}) we get
\begin{EQA}
	I' &=& 
	\left| \int_{-T}^{T}  \exp\bigg(i t b + \sum_{k=1}^{2} \frac{i \phi_{k} }{2}\bigg) \prod_{k=1}^{2} 
		\left(1+4t^{2}\lambda^{2}_{k}\right)^{-1/4} 
	\right|,
\end{EQA}
where \(\phi_{k} \eqdef \phi_{k}(t) \eqdef \arcsin\big(2 \lambda_{k} t/(1 + 4 t^{2} \lambda_{k}^{2})^{\frac{1}{2}}\big)\). 
Since \(\prod_{k=1}^{2}\left(1+4t^{2}\lambda^{2}_{k}\right)^{-1/4}\) is even function and \(\phi_{k}(t), k = 1,2\), is odd function of \(t\), we may rewrite \(I'\) as follows
\begin{EQA}
	I'
	&=& 
	\frac{2}{\sqrt{\lambda_{1} \lambda_{2}}}
	\left| \int_{0}^{T} \frac{1}{t} 
	\sin\bigg(t b + \sum_{k=1}^{2} \frac{1}{2} \bigg( \phi_{k} - \frac \pi 2 \bigg) \bigg) 
	\prod_{k=1}^{2}\left( \frac{t^{2} \lambda_{k}^{2}}{1+4t^{2}\lambda^{2}_{k}}\right)^{1/4} \, dt
	\right|.	
\end{EQA} 
We note that
\begin{EQA}
	\prod_{k=1}^{2}\left( \frac{t^{2} \lambda_{k}^{2}}{1+4t^{2}\lambda^{2}_{k}}\right)^{1/4} 
	&\le &
	\sqrt{|t| \lambda_{2}}	
\end{EQA}
Hence, to prove~(\refeq{eq: conj 1}) it is enough to show that
\begin{EQA}
	\label{eq: int 0}
	I'' 
	&\eqdef &
	\left| \int_{1/\lambda_{2}}^{T} 
	\frac{1}{t} \sin\bigg(t b + \sum_{k=1}^{2} \frac{1}{2} \bigg( \phi_{k} - \frac{\pi}{2} \bigg) \bigg) 
	\prod_{k=1}^{2}\left( \frac{t^{2} \lambda_{k}^{2}}{1+4t^{2}\lambda^{2}_{k}}\right)^{1/4} \, dt 
	\right| \le c.
\end{EQA}
We may rewrite \(I''\) as follows 
\begin{EQA}
	I'' 
	&\le & 
	I_{1}'' + \ldots + I_4'',
\end{EQA} 
where
\begin{EQA}
	I_{1}'' 
	&\eqdef & 
	\left| \int_{1/\lambda_{2}}^{T} \frac{1}{t}  \sin(t b)\, dt \right|, \\
	I_{2}'' 
	&\eqdef &
	\left| \int_{1/\lambda_{2}}^{T} \frac{1}{t} \left[\sin\bigg(t b + \sum_{k=1}^{2} \frac{1}{2} \bigg( \phi_{k} - \frac \pi 2 \bigg) \bigg)- \sin(t b) \right]\, dt 
	\right|, \\
	I_{3}'' 
	&\eqdef &
	\left| \int_{1/\lambda_{2}}^{T} \frac{1}{t} \sin\bigg(t b + \sum_{k=1}^{2} \frac{1}{2} \bigg( \phi_{k} - \frac \pi 2 \bigg) \bigg) \left[1 -\left( \frac{t^{2} \lambda_{1}^{2}}{1+4t^{2}\lambda^{2}_{1}}\right)^{1/4}  \right]  \, dt \right|, 
	\\
	I_{4}'' 
	&\eqdef &
	\left| \int_{1/\lambda_{2}}^{T} \frac{1}{t} \sin\bigg(t b + \sum_{k=1}^{2} \frac{1}{2} \bigg( \phi_{k} - \frac \pi 2 \bigg) \bigg) \right. 
	\\
	&& \qquad\qquad\qquad\left. \times \left[1 -\left( \frac{t^{2} \lambda_{2}^{2}}{1+4t^{2}\lambda^{2}_{2}}\right)^{1/4}  \right] \left( \frac{t^{2} \lambda_{1}^{2}}{1+4t^{2}\lambda^{2}_{1}}\right)^{1/4} \, dt \right|.
\end{EQA}
The bound \(I_{1}'' \le c\) is true since for any positive \(A\) and \(B\) we have
\begin{EQA}
	\left| \int_{A}^{B} \frac{\sin t}{t}  \, dt \right| 
	&\le &
	2\int_0^\pi \frac{\sin t}{t}  \, dt.	
\end{EQA}
To estimate \(I_{2}''\) we shall use the following inequalities 
\begin{EQA}
	&&|\sin(x+y) - \sin(x)| \le |y| \quad \text{ for all } x,y \in \R,\\
	&&0 \le \frac{\pi}{2} - \arcsin(1-z) \le 2^{\frac{3}{2}} z^{\frac{1}{2}} \quad \text{ for } 0 \le z \le 1.
\end{EQA}
Applying these inequalities we get that
\begin{EQA}
	\left|\sin\bigg(t b + \sum_{k=1}^{2} \frac{1}{2} \bigg( \phi_{k} - \frac \pi 2 \bigg) \bigg)- \sin(t b) \right| 
	&\le &
	\frac{c'}{\lambda_{2}^{2} t^{2}}, 
\end{EQA}
where \(c'\) is some absolute constant. Hence, 
\begin{EQA}
	I_3'' 
	&\le & 
	\frac{c'}{\lambda_{2}^{2}}\int_{1/\lambda_{2}}^{\infty} \frac{1}{t^3} \, dt \le c.
\end{EQA}
The estimates for \(I_3''\) and \(I_4''\) are similar. For simplicity we estimate \(I_3''\) only. Applying the following inequality
\begin{EQA}
	0 
	&\le &
	1 - \left( \frac{t^{2} \lambda_{k}^{2}}{1+4t^{2}\lambda^{2}_{k}}\right)^{1/4} \le \frac{1}{4 t^{2} \lambda_{2}^{2}}, \quad k = 1, 2,
\end{EQA}
we obtain that
\begin{EQA}
	I_3'' 
	&\le & 
	\frac{c''}{\lambda_{2}^{2}}\int_{1/\lambda_{2}}^{\infty} \frac{1}{t^3} \, dt \le c,
\end{EQA}
where \(c''\) is some absolute constant.
\end{proof}
	
\end{lemma}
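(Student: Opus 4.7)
The plan is to compute the inner Gaussian characteristic function explicitly, pass to polar form, exploit parity to reduce to an integral on $[0,T]$, and then reduce the claim to a classical Dirichlet-type estimate plus absolutely integrable error terms. First, since $\E e^{it\lambda Z^2}=(1-2it\lambda)^{-1/2}$ for $Z\sim\ND(0,1)$, independence gives
\[
\E\exp\bigl(it(\lambda_1 Z_1^2+\lambda_2 Z_2^2)\bigr)=\prod_{k=1}^2(1+4t^2\lambda_k^2)^{-1/4}\,e^{i\phi_k(t)/2},
\]
with $\phi_k(t)\eqdef\arctan(2t\lambda_k)$. The modulus is even and each $\phi_k$ is odd in $t$, so the integrand $e^{itb}\phi(t)$ splits into an even real and an odd imaginary part, and $I'$ equals twice the real integral on $[0,T]$.

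Next I would extract the dimensional scaling. For $t>0$ the identity $(1+4t^2\lambda_k^2)^{-1/4}=(2t\lambda_k)^{-1/2}A_k(t)$, with $A_k(t)\eqdef(4t^2\lambda_k^2/(1+4t^2\lambda_k^2))^{1/4}\to 1$ as $t\to\infty$, isolates the $1/\sqrt{t\lambda_k}$ decay. Combined with $\cos(\theta+\pi/2)=-\sin\theta$ and the identity $(\phi_1+\phi_2)/2=\pi/2+\tfrac12\sum_k(\phi_k-\pi/2)$, this gives
\[
|I'|=\frac{1}{\sqrt{\lambda_1\lambda_2}}\left|\int_0^T\frac{\sin\bigl(tb+\tfrac12\sum_k(\phi_k(t)-\pi/2)\bigr)}{t}\,A_1(t)A_2(t)\,dt\right|,
\]
so it suffices to bound the remaining integral by an absolute constant uniformly in $T$ and $b$. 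Assume without loss of generality $\lambda_2\ge\lambda_1$ and split at $t=1/\lambda_2$. On $[0,1/\lambda_2]$ one has $A_k(t)\lesssim\sqrt{t\lambda_k}$, so $A_1(t)A_2(t)/t\lesssim\sqrt{\lambda_1\lambda_2}$, yielding a contribution of order $\sqrt{\lambda_1/\lambda_2}\le 1$.

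On the tail $[1/\lambda_2,T]$ I would telescope the integrand into four pieces,
\[
\sin(\psi)A_1A_2=\sin(tb)+\bigl[\sin(\psi)-\sin(tb)\bigr]+\sin(\psi)\bigl(A_1-1\bigr)+\sin(\psi)A_1\bigl(A_2-1\bigr),
\]
with $\psi\eqdef tb+\tfrac12\sum_k(\phi_k-\pi/2)$. The leading piece gives $\int_{1/\lambda_2}^T\sin(tb)/t\,dt$, uniformly bounded in $T$ and $b$ by the classical fact $\sup_{0<A<B}|\int_A^B\sin x/x\,dx|<\infty$ (substitute $x=tb$). The three remaining pieces are handled using $|\sin(x+y)-\sin x|\le|y|$ together with the elementary estimates $0\le\pi/2-\arctan(2t\lambda_k)\le 1/(2t\lambda_k)$ and $0\le 1-A_k(t)\lesssim 1/(t^2\lambda_k^2)$, which force the integrands to decay like $1/(t^2\lambda_k^2)$ on the tail and integrate to $O(1)$.

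The main obstacle will be identifying the algebraic rearrangement in the second paragraph: one must simultaneously pull out $1/\sqrt{\lambda_1\lambda_2}$ as a clean multiplicative prefactor, re-center the oscillating phase about the limit $\pi/2$ of $\phi_k$ so that the residual phase correction decays integrably against $1/t$, and expose the factors $A_k-1$, all of which must decay fast enough on the tail. Once this structure is in place, the uniformity in $T$ and $b$ rests entirely on the classical Dirichlet estimate, and everything else reduces to elementary one-dimensional calculus.
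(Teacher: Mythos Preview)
Your strategy is the same as the paper's --- polar form of the characteristic function, parity reduction to $[0,T]$, extraction of the $1/\sqrt{\lambda_1\lambda_2}$ prefactor, a split near the origin, and a four-term telescope on the tail with the Dirichlet integral carrying the oscillation. The algebra in your second paragraph is correct (your $\arctan(2t\lambda_k)$ is identical to the paper's $\arcsin$ expression), and the paper proceeds exactly this way.

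There is, however, a genuine gap in your choice of split point. You take $\lambda_2\ge\lambda_1$ and cut at $t=1/\lambda_2$, i.e.\ at the reciprocal of the \emph{larger} eigenvalue. On the tail $[1/\lambda_2,T]$ you then have $t\lambda_2\ge 1$ but $t\lambda_1$ can be arbitrarily small, so neither $\pi/2-\phi_1(t)\le 1/(2t\lambda_1)$ nor $1-A_1(t)\lesssim 1/(t^2\lambda_1^2)$ is small there. Concretely, your second telescope piece gives
\[
\int_{1/\lambda_2}^{\infty}\frac{1}{t}\cdot\frac{1}{2t\lambda_1}\,dt=\frac{\lambda_2}{2\lambda_1},
\]
and your third piece gives $\lambda_2^2/(2\lambda_1^2)$; both blow up as $\lambda_2/\lambda_1\to\infty$, so the claimed ``integrate to $O(1)$'' fails.

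The fix is to cut at the reciprocal of the \emph{smaller} eigenvalue (in your convention, at $t=1/\lambda_1$); this is what the paper does implicitly with the ordering $\lambda_1\ge\lambda_2$ and the cut at $1/\lambda_2$. Then on the tail both $t\lambda_k\ge 1$, and all three error pieces integrate to absolute constants. Your near-zero estimate must be adjusted accordingly: on $[0,1/\lambda_1]$ use $A_1(t)\le\sqrt{t\lambda_1}$ together with the crude $A_2(t)\le 1$ (not $A_2\le\sqrt{t\lambda_2}$), so that $A_1A_2/t\le\sqrt{\lambda_1/t}$, which integrates to $2$. With this single correction the argument goes through and coincides with the paper's proof.
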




\begin{thebibliography}{9}

\bibitem{AdamczakLitvak2011}
\textsc{Adamczak, R., Litvak, A., Pajor, A.} and \textsc{Tomczak-Jaegermann, N.} (2011).
Restricted isometry property of matrices with independent columns and
neighborly polytopes by random sampling. \textit{Constr. Approx.}
\textbf{34}, 1, 61--88.
\MR{2796091}


\bibitem{Goodfellow-et-al-2016-Book}
\textsc{Bengio, I.} and \textsc{Courville, A.} (2016).
\textit{Deep learning}. Book in preparation for MIT Press.

\bibitem{Chernozhukov2013}
\textsc{Chernozhukov, V. and Chetverikov, D.} and \textsc{Kato, K.} (2013).
Gaussian approximations and multiplier bootstrap for maxima of
sums of high-dimensional random vectors. \textit{Ann. Statist.}, \textbf{41}, 6, 2786--2819.
\MR{3161448}


\bibitem{Chernozhukov2014}
\textsc{Chernozhukov, V. and Chetverikov, D.} and \textsc{Kato, K.} (2014).
Central Limit Theorems and Bootstrap in High Dimensions. \textit{arXiv:1412.3661}.



\bibitem{Chung2001}
\textsc{Chung, K.} (2001).
\textit{A course in probability theory}. Academic Press, Inc., San Diego, CA, third edition.
\MR{1796326}


\bibitem{GotzUlyanov2000}
\textsc{G{\"o}tze, F.} and \textsc{Ulyanov, V.} (2000). 
Uniform approximation in the CLT for balls in {E}uclidean spaces. \textit{Preprint 00-0034, SFB 343, Univ. Bielefeld}.

\bibitem{holtz2010sparse}
\textsc{Holtz, M.} (2010).
\textit{Sparse grid quadrature in high dimensions with applications in
	finance and insurance}, Lecture Notes in Computational Science and Engineering, \textbf{77}, Springer, Berlin.
\MR{2743492}


\bibitem{KoltchLounici2015b}
\textsc{Koltchinskii, V.} and \textsc{Lounici, K.} (2015).
Concentration inequalities and moment bounds for sample covariance
operators. \textit{ArXiv:1405.2468}.

\bibitem{KoltchLounici2015}
\textsc{Koltchinskii, V.} and \textsc{Lounici, K.} (2015).
Normal approximation and concentration of spectral projectors of
sample covariance. \textit{ArXiv:1504.07333}.

\bibitem{li2010}
\textsc{Li, Y.} and \textsc{Hsing, T.} (2010).
Deciding the dimension of effective dimension reduction space for
functional and high-dimensional data.
\textit{Ann. Statist.}, \textbf{38}, 5, 3028--3062.
\MR{2722463}

\bibitem{MarchPastur1967}
\textsc{Mar{\v{c}}enko, V.} and \textsc{Pastur, L.} (1967). 
Distribution of eigenvalues in certain sets of random matrices. \textit{Mat. Sb. (N.S.)}, \textbf{72}, 114, 507--536.
\MR{0208649}

\bibitem{ProkhUlyanov2013}
\textsc{Prokhorov, Y.} and \textsc{Ulyanov, V.} (2013).
Some approximation problems in statistics and probability. In \textit{Limit theorems in probability, statistics and number theory}, \textbf{42}, \textit{Springer Proc. Math. Stat.}, 235--249, Springer, Heidelberg.
\MR{3079145}

\bibitem{Rosenthal1970}
\textsc{Rosenthal, H.} (1970). On the subspaces of {$L^{p}$} {$(p>2)$} spanned by sequences of
independent random variables.
\textit{Israel J. Math.}, \textbf{8}, 273--303.
\MR{0271721}

\bibitem{spokoiny2015}
\textsc{Spokoiny, V.} and \textsc{Zhilova, M.} (2015).
Bootstrap confidence sets under model misspecification. 
\textit{\em Ann. Statist.}, \textbf{43}, 6, 2653--2675.
\MR{3405607}

\bibitem{Tropp2012}
\textsc{Tropp, J.} (2012).
User-friendly tail bounds for sums of random matrices.
\textit{Found. Comput. Math.}, \textbf{12}, 4, 389--434.
\MR{2946459}

\bibitem{Tsybakov2009}
\textsc{Tsybakov, A.} (2008).
\textit{Introduction to Nonparametric Estimation}. Springer, New York.
\MR{2724359}

\bibitem{vanHandel2015}
\textsc{van Handel, R.} (2015).
On the spectral norm of gaussian random matrices. \textit{ArXiv:1502.05003}.

\bibitem{Vershynin2012}
\textsc{Vershynin, R.} (2012).
Introduction to the non-asymptotic analysis of random matrices. In \textit{Compressed sensing}, 210--268, Cambridge Univ. Press, Cambridge.
\MR{2963170}

\bibitem{WangSloan2005}
\textsc{Wang, X.} and \textsc{Sloan, I.} (2005).
Why are high-dimensional finance problems often of low effective
dimension? \textit{SIAM Journal on Scientific Computing}, \textbf{27}, 1, 159--183.
\MR{2201179}
\end{thebibliography}
\end{document}